\DeclareMathOperator*{\argmax}{arg\,max}
\DeclareMathOperator*{\corr}{Corr}
\newcommand{\E}{\mathds{E}}
\newcommand{\V}{\mathds{V}}
\renewcommand{\P}{\mathds{P}}
\newcommand{\bX}{{\bf X}}
\newcommand{\bx}{{\bf x}}
\newcommand{\empmdi}{\widehat{\textrm{MDI}}}
\newcommand{\mdi}{\textrm{MDI}^{ \star}}
\newcommand{\tree}{\mathcal{T}}
\newcommand{\unifbeta}{\mathcal{U}^{\otimes 2^{\beta}}}
\newtheorem{remark}{Remark}
\newtheorem{theorem}{Theorem}
\newtheorem{lemme}{Lemma}
\newtheorem{techlemme}{Technical Lemma}
\newtheorem{model}{Model}
\newtheorem{proposition}{Proposition}
\newtheorem{corollary}{Corollary}
\newtheoremstyle{break}  
  {\topsep}   
  {\topsep}   
  {\itshape}  
  {0pt}       
  {\bfseries} 
  {}         
  {5pt plus 1pt minus 1pt}  
  {}          
\theoremstyle{break}
\author{}
\title{Trees, forests, and impurity-based variable importance in regression}
\date{}
\begin{document}

\maketitle

\noindent {\bf Erwan Scornet}\\
{\it Centre de Math\'ematiques Appliqu\'ees, Ecole Polytechnique, CNRS,\\ 
	Institut Polytechnique de Paris, Palaiseau, France}\\
\href{mailto:erwan.scornet@polytechnique.edu}{erwan.scornet@polytechnique.edu}

\begin{abstract}
Tree ensemble methods such as random forests \citep{Br01} 
are very popular to handle high-dimensional tabular data sets, notably because of their ability to detect sparse signals and their resulting good predictive accuracy. However, when machine learning is used for decision-making problems, settling for the best predictive procedures may not be reasonable since enlightened decisions require to understand the phenomena underlying the data, which is accessible only with an in-depth comprehension of the algorithm prediction process. Unfortunately, random forests are not intrinsically interpretable since their prediction results from averaging several hundreds of decision trees.  
A classic approach to gain knowledge on this so-called black-box algorithm is to compute variable importances, that are employed to assess the predictive impact of each input variable. Variable importances are then used to rank or select variables and thus play a great role in data analysis. Mean Decrease Impurity (MDI) is one of the two variable importance measures in random forests. However, there is no theoretical justification to use MDI: we do not even know what this indicator estimates. 
%
In this paper, we analyze MDI and prove that if input variables are independent and in absence of interactions, MDI provides a variance decomposition of the output, where the contribution of each variable is clearly identified. We also study models exhibiting dependence between input variables or interaction, for which the variable importance is intrinsically ill-defined. 
\medskip 

\noindent \emph{Index Terms} --- Random forests, variable importance, Mean Decrease Impurity.
\medskip

\noindent {2010 Mathematics Subject Classification}: 62G08, 62G20.

\end{abstract}

\section{Introduction}

Decision trees \citep[see][for an overview]{loh2011classification} can be used to solve pattern recognition tasks in an interpretable way: in order to build a prediction, trees ask to each observation a series of questions, each one being of the form ``Is variable $X^{(j)}$ larger than a threshold $s$?'' where $j,s$ are to be determined by the algorithm. Thus the prediction for a new observation only  depends on the sequence of questions/answers for this observation. One of the most popular decision trees is the CART procedure \citep[Classification And Regression Trees,][]{BrFrOlSt84}, which unfortunately suffers from low accuracy and intrinsic instability: modifying slightly one observation in the training set can change the entire tree structure together with the predicted values in some areas of the input space. 

To overcome this last issue, and also improve their accuracy, ensemble methods, which grow many base learners and aggregate them to predict, have been designed. Random forests, introduced by \citet{Br01}, are among the most famous tree ensemble methods. They proceed by growing trees based on CART procedure \citep[][]{BrFrOlSt84} and randomizing both the training set and the splitting variables in each node of the tree. Breiman's \citeyearpar{Br01} random forests have been under active investigation during the last two decades mainly because of their good practical performance and their ability to handle high-dimensional data sets. They are acknowledged to be state-of-the-art methods in fields such as genomics \citep[][]{Qi12} and pattern recognition \citep[][]{RoRiRaOrTo08}, just to name a few.
%
If empirical performances of random forests are not to be demonstrated anymore \citep{fernandez2014we}, their main flaw relies on their lack of interpretability because their predictions result from averaging over a large number of fully-grown trees (typically several hundreds):  each tree may be  interpreted, due to its recursive structure, but random forests remain an obscure black-box procedure.

Since interpretability is a concept difficult to define precisely, people eager to gain insights about the driving forces at work behind random forests predictions often focus on variable importance, a measure of the influence of each input variable to predict the output. 
%
In Breiman's \citeyearpar{Br01} original random forests, there exist two importance measures: the Mean Decrease Impurity \citep[MDI, or Gini importance, see][]{breiman2002manual}, which sums up the gain associated to all splits performed along a given variable; and the Mean Decrease Accuracy \citep[MDA, or permutation importance, see][]{Br01} which shuffles entries of a specific variable in the test data set and computes the difference between the error on the permuted test set and the original test set. Because of its very definition, MDI is an importance measure that can be computed for trees only, since it strongly relies on the tree structure, whereas MDA is an instantiation of the permutation importance that can be used for any predictive model. Both measures are used in practice even if they possess several major drawbacks.

MDI is known to favor variables with many categories \citep[see, e.g., ][]{strobl2007bias, nicodemus2011letter}. Even when variables have the same number of categories, MDI exhibits empirical bias towards variables that possess a category having a high frequency \citep{nicodemus2011letter, boulesteix2011random}. MDI is also biased in presence of correlated features \citep[][]{nicodemus2009predictor}.
A promising way to assess variable importance in random forest consists in designing new tree building process or new feature importances as in the \texttt{R} package \texttt{party} \citep{StBoKnAuZe08,strobl2009party, RPackage} or in \cite{li2019debiased, zhou2019unbiased}. 
On the other hand, MDA seems to exhibit less bias than MDI but tends to overestimate correlated features \cite{StBoKnAuZe08}. Besides, its scale version \citep[the default version in the \texttt{R} package \texttt{randomForest},][]{randomForestPackage} depends on the sample size and on the number of trees, the last being an arbitrary chosen parameter \citep{strobl2008danger}. The interested reader may refer to \cite{genuer2008random} and \cite{genuer2010variable} for an extensive simulation study about the influence of the number of observations, variables, and trees on MDA together with the impact of correlation on this importance measure.  
%
Despite all their shortcomings, one great advantage of MDI and MDA is their ability to take into account interaction between features even if unable to determine the part of marginal/joint effect \citep{wright2016little}. 


From a theoretical perspective, there are only but a few results on variable importance computed via tree procedures. The starting point for theory is by \cite{Is07} who studies a modified version of permutation importance and derives its asymptotic positive expression, when the regression function is assumed to be piecewise constant. A scaled population version of the permutation importance is studied by \cite{ZhZeKo12} who establish rate of convergence for the probability that a relevant feature is chosen in a modified random forest algorithm.
The exact population expression of permutation importance is computed by \cite{gregorutti2017correlation} for a Gaussian linear model.

Regarding the MDI, there are even fewer results. \cite{LoWeSuGe13} study the population MDI when all features are categorical. In this framework, they propose a decomposition of the total information, depending on the MDI of each variable and of interaction terms. They also prove that the MDI of an irrelevant variable is zero and that adding irrelevant variables does not impact the MDI of relevant ones. Even if these results may appear to be very simple, the proofs are not straightforward at all, which explains the few results on this topic. This work was later extended by \cite{sutera2016context} to the case of context-dependent features. 

Whereas recent works tackled the question of (in)consistency of the MDA \Citep{ramosaj2019asymptotic, benard2021mda}, there exists no general result establishing the consistency of MDI toward population quantities when computed with the original random forest algorithms: all existing results about MDI focus on modified random forests version with, in some cases, strong assumptions on the regression model. Therefore, there are no guarantees that using impurity-based variable importance computed via random forests is suitable to select variables, which is nevertheless often done in practice.


\paragraph{Agenda.} In this paper, we focus on the regression framework and derive theoretical properties about the MDI used in Breiman's \citeyearpar{Br01} random forests. Section~\ref{sec:notations} is devoted to notations and  describes tree and forest construction, together with MDI calculation. 
The main results are gathered in Section~\ref{sec:theoretical_results}. We prove that both population MDI and empirical MDI, computed with Breiman's random forests, can be seen as a percentage of explained variance and are directly related to the quadratic risk of the forest, a result already proved by \cite{LoWeSuGe13} and \citet{sutera2016context} for the population MDI in presence of categorical features. We also derive the expression of the sum of MDIs for two groups of variables that are independent and do not interact, which is at the core of our analysis. The analysis highlights that fully-developed random forests (whose tree leaves contain exactly one observation) produce inaccurate variable importances and should not be used for that purpose.  
In Section~\ref{sec:additive_models}, we establish that for additive regression models with independent inputs, the MDI of each variable takes a very simple, interpretable form, which is the variance of each univariate component $m_j(X^{(j)})$. We prove that empirical MDI computed with Breiman's forest targets this population quantity, which is nothing but a standard way to decompose the variance of the output. Hence, in the absence of input dependence and correlation, MDI computed with random forests provides a good assessment of the variable importance. To the best of our knowledge, it is the first result proving the good properties of the empirical MDI computed with Breiman's forests. 
To move beyond the additive models and understand the impact of interactions, we study a multiplicative model in Section~\ref{sec:interactions}. In this model, MDI is ill-defined and one needs to aggregate many trees to obtain a consistent MDI value. Averaging trees is known to be of great importance to improve accuracy but it turns out to be mandatory to obtain consistent variable importance measure in the presence of interactions. 
The impact of dependence between input variables is studied in Section~\ref{sec:corr}. We stress out that correlated variables are more likely to be chosen in the splitting procedure and, as a consequence, exhibit a larger MDI than independent ones. As for the presence of interactions, this highlights the importance of computing MDI by averaging many trees. 
Proofs are postponed to Section~\ref{sec:proofs}. Experiments on MDI can be found on the ArXiv version of this paper.

%
%
%


\section{Notation and general definition}
\label{sec:notations}

We assume to be given a data set $\mathcal{D}_n = \{(\bX_i, Y_i), i = 1, \hdots , n \}$ of \textit{i.i.d.} observations $(\bX_i, Y_i)$ distributed as the generic pair $(\bX, Y)$ where $ \bX \in [0,1]^d$ and $Y \in \mathds{R}$ with $\mathds{E}[Y^2] < \infty$. Our aim is to estimate the regression function $m : [0,1]^d \to \mathds{R}$, defined as $m(\bX) = \E [Y|\bX]$, using a random forest procedure and to identify relevant variables, i.e. variables on which $m$ depends.

\begin{remark}
Note that we confine the analysis to the regression setting with (bounded) continuous input variables. Extending our work to the classification setting is not straightforward, since our main result relies on the additive regression framework which does not have any equivalent in classification. Assuming that all input variables are continuous is a mild and common hypothesis in the random forest literature \citep[see, e.g., ][just to name a few]{ArGe14, wager2015adaptive, zhou2019unbiased} which drastically eases the  mathematical analysis.
\end{remark}

\subsection{CART construction and empirical MDI}

\label{subsec:emp_cart}

CART \citep{BrFrOlSt84} is the elementary component of random forests \citep{Br01}. CART construction works as follows. Each node of a single tree $\tree$ is associated with a hyper-rectangular cell included in $[0,1]^d$. The root of the tree is $[0,1]^d$ itself and, at each step of the  tree construction, a node (or equivalently its corresponding cell) is split in two parts. The terminal nodes (or leaves), taken together, form a partition of $[0,1]^d$. An example of such tree and partition is depicted in Figure~\ref{figure1}, whereas the full procedure is described in Algorithm~\ref{alg:split-cell}.

\begin{figure}[h!!]
	\begin{minipage}[c]{.46\linewidth}
		\includegraphics[scale=0.5]{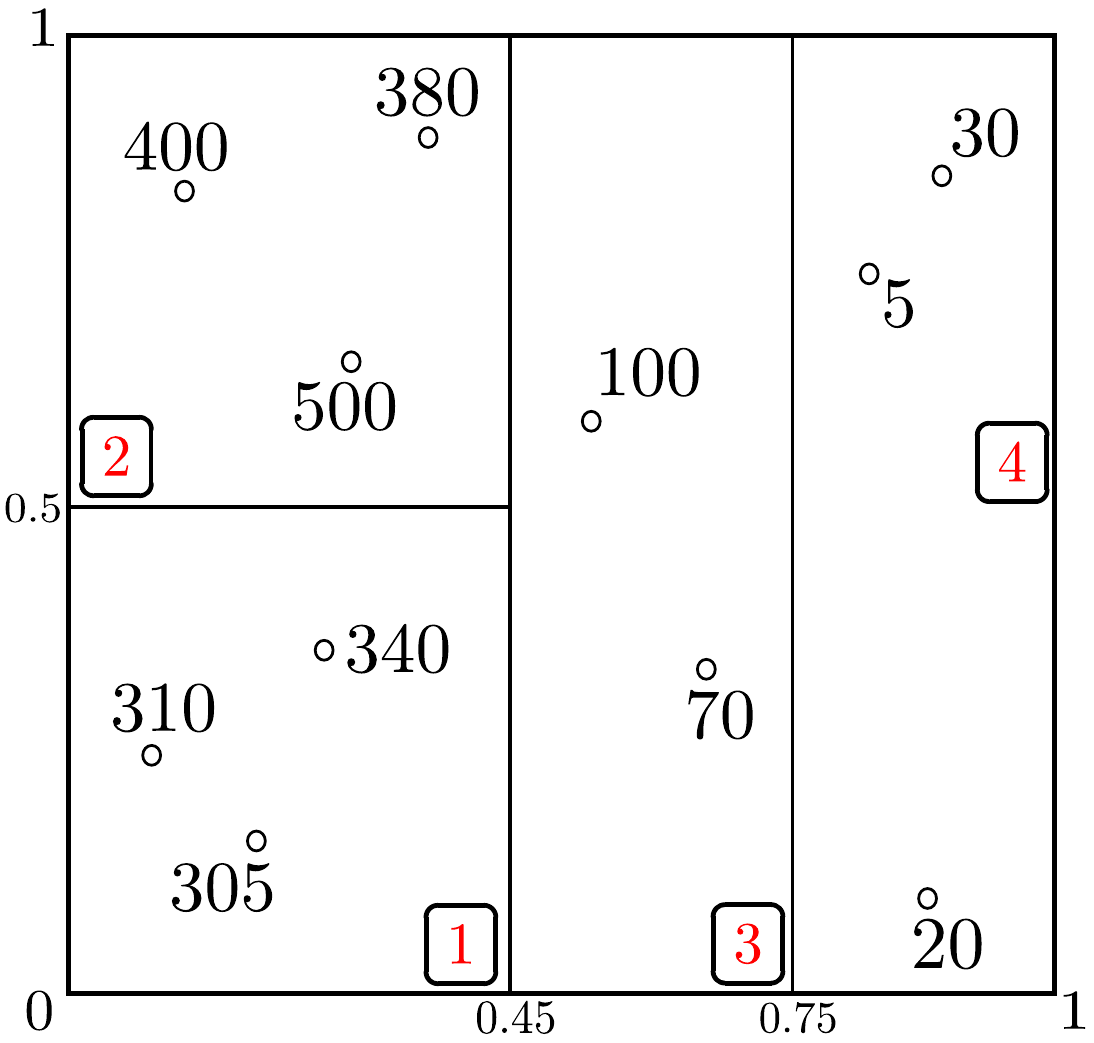}
	\end{minipage} 
	\hspace{-0.5cm}
	\begin{minipage}[l]{.45\linewidth}
		\includegraphics[scale=0.5]{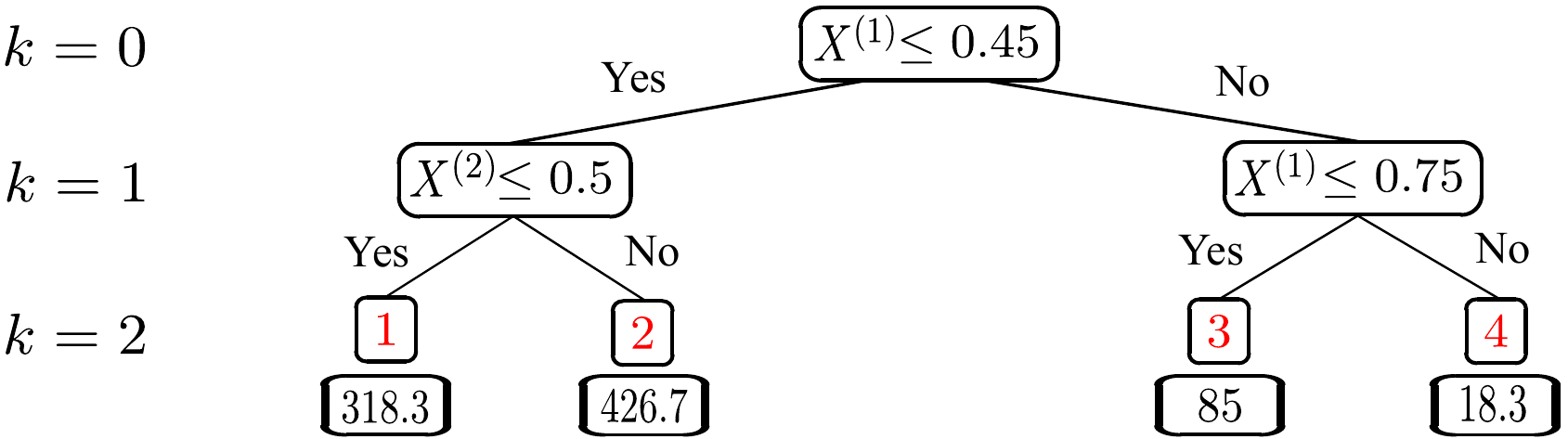}
	\end{minipage}
	\caption{A decision tree of depth $k=2$ in dimension $d=2$ (right) and the corresponding partition (left).}
	\label{figure1}
\end{figure}

\begin{algorithm}[htbp]
	\caption{CART construction.
	}
	\label{alg:split-cell} 
	\begin{algorithmic}[1]
		\STATE \textbf{Inputs:} data set $\mathcal{D}_n$, maximum number of observations in each leaf \texttt{nodesize}
		\STATE \textbf{Set} $\mathcal{A}_{\textrm{leaves}} = {\emptyset}$ and  $\mathcal{A}_{\textrm{inner nodes}} = \{ {[0,1]^d} \}$.
		\WHILE {$\mathcal{A}_{\textrm{inner nodes}} \neq \emptyset$} 
		\STATE Select the first element $A \in \mathcal{A}_{\textrm{inner nodes}}$
		\IF {$A$ contains more than \texttt{nodesize} observations}
		\STATE Select the best split minimizing the CART-split criterion defined below (see equation \eqref{definition_empirical_CART_criterion})
		\STATE Split the cell accordingly. 
		\STATE Concatenate the two resulting cell $A_L$ and $A_R$ at the end of $\mathcal{A}_{\textrm{inner nodes}}$.
		\STATE Remove $A$ from $\mathcal{A}_{\textrm{inner nodes}}$
		\ELSE 
		\STATE Concatenate $A$ at the end of $\mathcal{A}_{\textrm{leaves}}$.
		\STATE Remove $A$ from $\mathcal{A}_{\textrm{inner nodes}}$
		\ENDIF
		\ENDWHILE
		\STATE \textbf{Output:} The set of terminal leaves $\mathcal{A}_{\textrm{leaves}}$.
%
	\end{algorithmic}
\end{algorithm}

We define now the CART-split criterion. To this aim, we let $A \subset[0,1]^d $ be a generic cell and $N_n(A)$ be the number of data points falling into $A$. A split in $A$ is a pair $(j,z)$, where $j$ is a dimension in $\{1, \hdots, d\}$ and $z$ is the position of the cut along the $j$-th coordinate, within the limits of $A$. We let $\mathcal{C}_A$ be the set of all such possible cuts in $A$. Then, for any $(j,z) \in \mathcal{C}_A$, the CART-split criterion \citep[][]{BrFrOlSt84} takes the form
\begin{align}
L_{n,A}(j,z) = & \frac{1}{N_n(A)} \sum_{i=1}^n (Y_i - \bar{Y}_{A})^2\mathds{1}_{\bX_i \in A} \nonumber \\
&  - \frac{1}{N_n(A)} \sum_{i=1}^n (Y_i - \bar{Y}_{A_{L}} \mathds{1}_{\bX_i^{(j)}  < z} - \bar{Y}_{A_{R}} \mathds{1}_{\bX_i^{(j)} \geq z})^2 \mathds{1}_{\bX_i \in A},  \label{definition_empirical_CART_criterion}
\end{align}
where $A_L = \{ \bx \in A: \bx^{(j)} < z\}$, $A_R = \{ \bx \in A: \bx^{(j)} \geq z\}$, and $\bar{Y}_{A}$ (resp., $\bar{Y}_{A_{L}}$, $\bar{Y}_{A_{R}}$) is the average of the $Y_i$'s belonging to $A$ (resp., $A_{L}$, $A_{R}$), with the convention $0/0=0$. At each cell $A$, the best split $(j_{n,A}, z_{n,A})$ is finally selected by maximizing $L_{n,A}(j,z)$ over $\mathcal{C}_A$, that is
\begin{align}
(j_{n,A},z_{n,A}) \in \argmax\limits_{(j,z) \in \mathcal{C}_A } L_{n,A}(j,z).
\label{eq:best_empirical_splits}
\end{align}
To remove ties in the argmax, the best cut is always performed along the best cut direction $j_{n,A}$, at the middle of two consecutive data points. Once the tree has been built, the tree prediction $m_n(\bx)$ for a new query point $\bx$ is computed as the average of the $Y_i$ falling into the same leaf as $\bx$.

The CART-split criterion $L_{n,A}(j_{n,A},z_{n,A})$ can be rewritten as the difference in impurity between the cell $A$ and its two children, where the impurity is simply defined as the variance of the output in regression \citep[see also Chapter 4 in][]{BrFrOlSt84}. The impurity is thus related to the splitting criterion and can be used to assess variable importances via MDI. More precisely, the Mean Decrease in Impurity (MDI) for the variable $X^{(j)}$ computed via a tree $\tree$ is defined by 
\begin{align}
\empmdi_{\tree}(X^{(j)}) =  \sum_{\substack{A \in \mathcal{T}\\ j_{n,A} = j}}  p_{n,A} L_{n,A}(j_{n,A}, z_{n,A}), 
\label{eq:MDI_definition}
\end{align}
where the sum ranges over all cells $A$ in $\tree$ that are split along variable $j$ and $p_{n,A}$ is the fraction of observations falling into $A$. In other words, the MDI of $X^{(j)}$ computes the weighted decrease in impurity related to splits along the variable $X^{(j)}$.

\subsection{Theoretical CART construction and population MDI}
\label{subsec:theoretical_cart}

In order to study the (empirical) MDI defined in equation (\ref{eq:MDI_definition}), we first need to define and analyze the population version of MDI. First, we define a theoretical CART tree, as in Algorithm~\ref{alg:split-cell}, except for the splitting criterion which is replaced by its population version. Namely, for all cells $A$ and all splits $(j,z) \in \mathcal{C}_A$, the population version of the CART-split criterion is defined as
\begin{align}
L_A^{\star}(j,z) & = \V[ Y | \bX \in A ] - \P[ \bX^{(j)} < z\,|\, \bX \in A ]  ~\V [Y | \bX^{(j)} < z, \bX \in A ] \nonumber \\
& \qquad - \P[ \bX^{(j)} \geq z\,|\, \bX \in A ] ~\V [Y | \bX^{(j)} \geq z, \bX \in A ].
\label{eq:theoretical_cart_crit}
\end{align}
Therefore in each cell of a theoretical tree $\mathcal{T}^{\star}$, the best split $(j_A^{\star},z_A^{\star})$ is chosen by maximizing the population version of the CART-split criterion that is 
\begin{align*}
(j_A^{\star},z_A^{\star}) \in \argmax\limits_{(j,z) \in \mathcal{C}_A } L_A^{\star}(j,z).
\end{align*}
Of course, in practice, we cannot build a theoretical tree $\tree^{\star}$ since it relies on the true distribution $(\bX,Y)$ which is unknown. A theoretical tree is just an abstract mathematical object, for which we prove properties that will be  later extended to the (empirical) CART tree, our true object of interest. 

As for the empirical MDI defined above, the population MDI for the variable $X^{(j)}$ computed via the theoretical tree $\tree^{\star}$ is defined as
\begin{align*}
\mdi_{\tree^{\star}}(X^{(j)}) =  \sum_{\substack{A \in \tree^{\star}\\ j_{A}^{\star} = j}}  p^{\star}_{A} L_A^{\star}(j_{A}^{\star}, z_{A}^{\star}), 
\end{align*} 
where all empirical quantities have been replaced by their population version and the theoretical CART tree is used instead of the empirical CART tree defined in Section~\ref{subsec:emp_cart}. We also let $p^{\star}_{A} = \mathds{P}[\bX \in A]$.

\paragraph{Random forest} A random forest is nothing but a collection of several decision trees whose construction has been randomized. In Breiman's forest, CART procedure serves as base learner and the randomization is performed in two different ways: 
\begin{enumerate}
	\item Prior to the construction of each tree, a sample of the original data set is randomly drawn. Only this sample is used to build the tree. Sampling is typically done using bootstrap, by randomly drawing $n$ observations out of the original $n$ observations with replacement. 
	
	\item Additionally, for each cell, the best split is not selected along all possible variables. Instead, for each cell, a subsample of $\texttt{mtry}$ variables is randomly selected without replacement. The best split is selected only along these variables. By default, $\texttt{mtry} = d/3$ in regression. 
\end{enumerate}
Random forest prediction is then simply the average of the predictions of such randomized CART trees. Similarly, the MDI computed via a forest is nothing but the average of the MDI computed via each tree of the forest. 

In the sequel, we will use the  theoretical random forest (forest that aggregate theoretical CART trees) and the population MDI to derive results on the empirical random forest (forest that aggregate empirical CART trees; the one widely used in practice) and the empirical MDI. 

\section{Main theoretical result}
\label{sec:theoretical_results}

For any theoretical CART tree $\tree^{\star}$, and for any $k \in \mathds{N}$, we denote by $\tree_k^{\star}$ the truncation of $\tree^{\star}$ at level $k$, that is the subtree of $\tree^{\star}$ rooted at $[0,1]^d$, in which each leaf has been cut at most $k$ times. Let $A_{\tree_k^{\star}}(\bx)$ be the cell of the tree $\tree_k^{\star}$ containing $\bx$.
%
For any function $f : [0,1]^d \to \mathds{R}$ and any cell $A \subset [0,1]^d$, let 
\begin{align*}
\Delta (f , A) = \mathds{V}[f(\bX) | \bX \in A]
\end{align*}
be the variance of the function $f$ in the cell $A$ with respect to the distribution of $\bX$. Proposition~\ref{thm:sumImportance} states that the population MDI computed via theoretical CART trees can be used to decompose the variance of the output, up to some residual term which depends on the risk of the theoretical tree estimate. 

\begin{proposition} \label{thm:sumImportance}
	Assume that $Y = m (\bX) + \varepsilon,$ where $\varepsilon$ is a noise satisfying $\mathds{E}[\varepsilon | \bX] = 0$ and  $\mathds{V}[\varepsilon|\bX] = \sigma^2$ almost surely, for some constant $\sigma^2$. 
	Consider a theoretical CART tree $\tree^{\star}$. Then, for all $k \geq 0$, 
	\begin{align}
	\mathds{V}[Y] = \sum_{j=1}^d \mdi_{\tree_k^{\star}}(X^{(j)})  + \mathds{E}_{\bX'} \big[ \mathds{V}[ Y | \bX \in A_{\tree_k^{\star}}(\bX') ]\big],
	\label{eq:imp_node_imp_leaves_pop}
	\end{align}
	where $\bX'$ is an i.i.d. copy of $\bX$. 
\end{proposition}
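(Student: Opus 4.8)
The plan is to establish \eqref{eq:imp_node_imp_leaves_pop} as a telescoping identity for the impurity (variance of $Y$) along the branches of $\tree_k^\star$; note that the noise structure $\V[\varepsilon \mid \bX] = \sigma^2$ plays no role here, the identity holding for any square-integrable $Y$. First I would sum the definition of $\mdi_{\tree_k^\star}(X^{(j)})$ over $j = 1,\dots,d$. Since every internal (split) node $A$ of $\tree_k^\star$ is cut along exactly one coordinate $j_A^\star$, this collapses the double indexing into a single finite sum, $\sum_{j=1}^d \mdi_{\tree_k^\star}(X^{(j)}) = \sum_{A} p_A^\star\, L_A^\star(j_A^\star, z_A^\star)$, where $A$ ranges over the internal nodes of $\tree_k^\star$. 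Next I would rewrite each summand as a difference: writing $A_L, A_R$ for the two children of $A$ and using that $A_L, A_R \subset A$, one has $\P[\bX \in A_L] = \P[\bX^{(j_A^\star)} < z_A^\star \mid \bX \in A]\,\P[\bX \in A]$ and likewise for $A_R$, so substituting \eqref{eq:theoretical_cart_crit} and setting $Q(A) := \P[\bX \in A]\,\V[Y \mid \bX \in A]$ yields $p_A^\star\, L_A^\star(j_A^\star, z_A^\star) = Q(A) - Q(A_L) - Q(A_R)$.

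Then I would telescope the sum over internal nodes: each node other than the root of $\tree_k^\star$ occurs once with a minus sign (as its parent's child) and, if it is itself internal, once with a plus sign (as a parent), so all contributions cancel except $Q([0,1]^d) = \V[Y]$ and the leaf terms, giving $\sum_{j=1}^d \mdi_{\tree_k^\star}(X^{(j)}) = \V[Y] - \sum_{A \text{ leaf of } \tree_k^\star} p_A^\star\, \V[Y \mid \bX \in A]$. (Alternatively this could be done by a one-line induction on $k$: the base case $k = 0$ is immediate since $\tree_0^\star = \{[0,1]^d\}$, and passing from level $k$ to $k+1$ each newly split leaf $A$ adds $p_A^\star L_A^\star$ to the MDI sum and, by the identity of the previous step, removes exactly the same amount from the residual term.) Finally I would identify the residual term with $\E_{\bX'}\big[\V[Y \mid \bX \in A_{\tree_k^\star}(\bX')]\big]$, which is immediate because the leaves of $\tree_k^\star$ form a partition of $[0,1]^d$ and an independent copy $\bX'$ of $\bX$ lands in leaf $A$ with probability exactly $p_A^\star = \P[\bX \in A]$.

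The argument requires no analytic estimates; the only points needing care are the sign bookkeeping in the telescoping step — correctly separating the internal nodes of the \emph{truncated} tree from its leaves — and checking that degenerate cells (those with $\P[\bX \in A] = 0$, or those admitting only trivial splits with $\P[\bX^{(j)} < z \mid \bX \in A] \in \{0,1\}$) contribute zero to both sides, which follows from the convention $0/0 = 0$. Neither is a genuine obstacle, so I expect the main subtlety to be simply presenting the telescoping cleanly and keeping track of which cells carry which sign.
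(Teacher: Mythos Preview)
Your proposal is correct and follows essentially the same telescoping argument as the paper: the paper organizes the telescoping level by level, defining $u_k = \sum_{\ell} \P[\bX \in A_{\ell,k}]\,\V[Y \mid \bX \in A_{\ell,k}]$ and writing $\sum_j \mdi_{\tree_K^\star}(X^{(j)}) = \sum_{k=0}^{K-1}(u_k - u_{k+1}) = u_0 - u_K$, which is exactly the inductive alternative you mention. Your node-by-node cancellation via $Q(A) - Q(A_L) - Q(A_R)$ is an equivalent (and arguably cleaner) bookkeeping of the same identity, and your observation that the noise assumption is not used here is also correct.
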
	
	
Note that few assumptions are made in Proposition~\ref{thm:sumImportance}, which makes it very general. In addition, one can see that the sum of population MDI is close to the $R^2$ measure used to assess the quality of regression model. The population $R^2$ is defined as 
\begin{align*}
1 - \frac{\mathds{E}_{\bX'} \big[ \mathds{V}[ Y | \bX \in A_{\tree_k^{\star}}(\bX') ]\big]}{\mathds{V}[Y]} = \frac{\sum_{j=1}^d \mdi_{\tree_{k}^{\star}}(X^{(j)})}{\mathds{V}[Y]}.
\end{align*}
Hence, the sum of MDI divided by the variance of the output corresponds to the percentage of variance explained by the model, which is exactly the population $R^2$. Therefore, Proposition~\ref{thm:sumImportance} draws a clear connection between MDI and the very classical $R^2$ measure. 

We say that a theoretical tree $\tree^{\star}$ is consistent if $\lim_{k \to \infty} \Delta (m , A_{\tree_k^{\star}}(\bX')) = 0$. 
 If a theoretical tree is consistent, then its population $R^2$ tends to $\mathds{V}[m(\bX)]/\mathds{V}[Y]$ as shown in Corollary~\ref{cor:sumImportance} below.

\begin{corollary}
	\label{cor:sumImportance}
Grant assumptions of Proposition~\ref{thm:sumImportance}. Additionally, assume that $\|m\|_{\infty}<\infty$ and, almost surely, 
$\lim\limits_{k \to \infty} \Delta (m , A_{\tree_k^{\star}}(\bX')) = 0$. Then, for all $\gamma>0$, there exists $K$ such that, for all $k >K$, 
\begin{align*}
\Big| \sum_{j=1}^d \mdi_{\tree_{k}^{\star}}(X^{(j)}) - \mathds{V}[m(\bX)]\Big| \leq \gamma
\end{align*}
\end{corollary}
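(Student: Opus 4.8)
The plan is to combine the variance decomposition of Proposition~\ref{thm:sumImportance} with the noise structure $Y = m(\bX)+\varepsilon$ and a dominated-convergence argument.

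First I would rewrite the residual term of \eqref{eq:imp_node_imp_leaves_pop} in terms of $\Delta(m,\cdot)$. Fix a cell $A\subset[0,1]^d$. Since $\mathds{E}[\varepsilon\,|\,\bX]=0$, we have $\mathds{E}[\varepsilon\,|\,\bX\in A]=0$, hence $Y-\mathds{E}[Y\,|\,\bX\in A] = \big(m(\bX)-\mathds{E}[m(\bX)\,|\,\bX\in A]\big)+\varepsilon$. Expanding the square, the cross term $\mathds{E}\big[(m(\bX)-\mathds{E}[m(\bX)\,|\,\bX\in A])\,\varepsilon\,\big|\,\bX\in A\big]$ vanishes after conditioning on $\bX$ (again by $\mathds{E}[\varepsilon\,|\,\bX]=0$), while $\mathds{E}[\varepsilon^2\,|\,\bX\in A]=\mathds{E}\big[\mathds{V}[\varepsilon\,|\,\bX]\,\big|\,\bX\in A\big]=\sigma^2$. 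Therefore $\mathds{V}[Y\,|\,\bX\in A]=\Delta(m,A)+\sigma^2$. Applying this identity with $A=A_{\tree_k^{\star}}(\bX')$ and integrating over $\bX'$ gives $\mathds{E}_{\bX'}\big[\mathds{V}[Y\,|\,\bX\in A_{\tree_k^{\star}}(\bX')]\big] = \mathds{E}_{\bX'}\big[\Delta(m,A_{\tree_k^{\star}}(\bX'))\big]+\sigma^2$. Since the same computation with $A=[0,1]^d$ yields $\mathds{V}[Y]=\mathds{V}[m(\bX)]+\sigma^2$, Proposition~\ref{thm:sumImportance} becomes
\[
\sum_{j=1}^d \mdi_{\tree_k^{\star}}(X^{(j)}) = \mathds{V}[m(\bX)] - \mathds{E}_{\bX'}\big[\Delta(m,A_{\tree_k^{\star}}(\bX'))\big].
\]

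It then remains to show that $\mathds{E}_{\bX'}\big[\Delta(m,A_{\tree_k^{\star}}(\bX'))\big]\to 0$ as $k\to\infty$. By hypothesis $\Delta(m,A_{\tree_k^{\star}}(\bX'))\to 0$ almost surely; moreover, for every $k$ and every $\bX'$ one has $\Delta(m,A_{\tree_k^{\star}}(\bX'))=\mathds{V}[m(\bX)\,|\,\bX\in A_{\tree_k^{\star}}(\bX')]\leq \mathds{E}[m(\bX)^2\,|\,\bX\in A_{\tree_k^{\star}}(\bX')]\leq \|m\|_{\infty}^2<\infty$, so the sequence is dominated by the integrable constant $\|m\|_{\infty}^2$. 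Dominated convergence then gives $\mathds{E}_{\bX'}\big[\Delta(m,A_{\tree_k^{\star}}(\bX'))\big]\to 0$, i.e.\ for every $\gamma>0$ there exists $K$ such that this expectation is at most $\gamma$ for all $k>K$; in view of the displayed identity this is exactly the claimed bound on $\big|\sum_{j=1}^d \mdi_{\tree_k^{\star}}(X^{(j)}) - \mathds{V}[m(\bX)]\big|$.

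The argument is essentially bookkeeping on conditional variances; the only genuinely necessary ingredient beyond Proposition~\ref{thm:sumImportance} is the boundedness $\|m\|_{\infty}<\infty$, which supplies the dominating function and is the single place where the interchange of limit and expectation could otherwise fail.
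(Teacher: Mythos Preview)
Your proof is correct and follows essentially the same approach as the paper: both arguments reduce to applying dominated convergence to the residual term in Proposition~\ref{thm:sumImportance}, using the bound provided by $\|m\|_{\infty}$. The paper works directly with $f_k(\bx)=\mathds{V}[Y\,|\,\bX\in A_{\tree_k^{\star}}(\bx)]$ (bounded by $4\|m\|_{\infty}^2+\sigma^2$) and shows $\mathds{E}[f_k(\bX)]\to\sigma^2$, whereas you first isolate $\Delta(m,\cdot)$ via the decomposition $\mathds{V}[Y\,|\,\bX\in A]=\Delta(m,A)+\sigma^2$; the two computations are equivalent.
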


Consistency (and, in this case, tree consistency) is a prerequisite when dealing with algorithm intepretability. Indeed, it is hopeless to think that we can leverage information about the true data distribution from an inconsistent algorithm, intrinsically incapable of modelling these data. Therefore, we assume in this section that theoretical trees are consistent and study afterwards variable importances produced by such algorithm. This strategy has been also adopted in \citet{ramosaj2019asymptotic}, who assume the consistency of the (infinite) empirical random forests \citep[Assumption A5, page 6; see also Assumption A2, page 9 in][]{benard2021mda}.
In the next sections, we will be able to prove tree consistency for specific regression models, allowing us to remove the generic consistency assumption in our results, as the one in Corollary~\ref{cor:sumImportance}. 


Corollary~\ref{cor:sumImportance} states that if the theoretical tree $\tree_k^{\star}$ is consistent, then the sum of the MDI of all variables tends to the total amount of information of the model, that is $\mathds{V}[m(\bX)]$. This gives a nice interpretation of MDI as a way to decompose the total variance $\mathds{V}[m(\bX)]$ across variables. 
\cite{LoWeSuGe13} prove a similar result when all features are categorical. This result was later extended by  \cite{sutera2016context} in the case of context-dependent features. In their case, trees are finite since there exists only a finite number of variables with a finite number of categories. The major difference with our analysis is that trees we study can be grown to arbitrary depth, since we consider continuous input variables, along which an infinite number of splits can be performed. We thus need to control the error of a tree stopped at some level $k$, which is exactly the right-hand term in equation~\eqref{eq:imp_node_imp_leaves_pop}.

According to Corollary~\ref{cor:sumImportance}, the sum of the MDI is the same for all consistent theoretical trees: even if there may exist many theoretical trees, the sum of MDI computed for each theoretical tree tends to the same value, regardless of the considered tree. Therefore, all consistent theoretical tree produce the same asymptotic value for the sum of population MDI. 

In practice, one cannot build a theoretical CART tree or compute the population MDI. Proposition~\ref{thm:sumImportance_emp} below is the extension of Proposition~\ref{thm:sumImportance} for the empirical CART procedure. Let $\tree_n$ be the (empirical) CART tree built with data set $\mathcal{D}_n$ and let, for all $k$, $\tree_{n,k}$ be the truncation of $\tree_{n}$ at level $k$. 
Denote by $\widehat{\mathds{V}[Y]}$ the empirical variance of $Y$ computed on the data set $\mathcal{D}_n$. Define, for any function $f: [0,1]^d \mapsto \mathds{R}$, the empirical quadratic risk via
\begin{align*}
R_n(f) = \frac{1}{n} \sum_{i=1}^n (Y_i - f(X_i))^2.
\end{align*}

\begin{proposition}[Empirical version of Proposition~\ref{thm:sumImportance}]
\label{thm:sumImportance_emp}
Let $\tree_n$ be the CART tree, based on the data set $\mathcal{D}_n$. Then, 
\begin{align}
\widehat{\mathds{V}[Y]}  =  \sum_{j=1}^d \empmdi_{\tree_n}(X^{(j)}) + R_n(f_{\tree_n}), \label{eq:imp_node_imp_leaves_emp}
\end{align}
where $f_{\tree_n}$ is the estimate associated to $\tree_n$, as defined in Algorithm~\ref{alg:split-cell}.
\end{proposition}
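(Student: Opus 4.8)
The plan is to prove \eqref{eq:imp_node_imp_leaves_emp} by a finite telescoping argument over the nodes of $\tree_n$, which is the empirical counterpart of the argument behind Proposition~\ref{thm:sumImportance}. The first step is a node-level identity. For any cell $B \subset [0,1]^d$, set
\[
W(B) \;=\; \frac1n \sum_{i=1}^n (Y_i - \bar{Y}_B)^2 \,\mathds{1}_{\bX_i \in B},
\]
so that $W([0,1]^d) = \widehat{\mathds{V}[Y]}$. Fix an internal cell $A$ of $\tree_n$ with best split $(j_{n,A},z_{n,A})$ and children $A_L, A_R$. Multiplying the CART-split criterion \eqref{definition_empirical_CART_criterion} by $p_{n,A} = N_n(A)/n$ and splitting the second sum according to whether $\bX_i \in A_L$ or $\bX_i \in A_R$ (so that the term in the square equals $Y_i - \bar{Y}_{A_L}$ or $Y_i - \bar{Y}_{A_R}$ respectively), one gets the exact identity
\[
p_{n,A}\, L_{n,A}(j_{n,A},z_{n,A}) \;=\; W(A) - W(A_L) - W(A_R),
\]
valid for every internal node, with the $0/0=0$ convention absorbing any empty child.

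The second step is to sum over variables. Since every internal node of $\tree_n$ is split along exactly one coordinate $j_{n,A}\in\{1,\dots,d\}$, summing the definition \eqref{eq:MDI_definition} over $j$ and using the node-level identity yields
\[
\sum_{j=1}^d \empmdi_{\tree_n}(X^{(j)}) \;=\; \sum_{\substack{A \in \tree_n \\ A \text{ internal}}} \big( W(A) - W(A_L) - W(A_R) \big).
\]
This sum telescopes along the tree structure: every non-root cell $C$ of $\tree_n$ is the child of exactly one internal node, contributing $-W(C)$ once, and contributes $+W(C)$ exactly when $C$ is itself internal; hence all non-root internal cells cancel. What survives is $+W([0,1]^d)$, coming from the root (which is internal but is nobody's child, the trivial case of a tree reduced to its root being handled separately), minus $\sum_{A \text{ leaf}} W(A)$, the leaves being children but not internal.

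The last step identifies the two boundary terms. The root term is $W([0,1]^d) = \frac1n\sum_{i=1}^n (Y_i - \bar{Y})^2 = \widehat{\mathds{V}[Y]}$. For the leaves, the CART construction of Algorithm~\ref{alg:split-cell} guarantees that $\mathcal{A}_{\textrm{leaves}}$ is a partition of $[0,1]^d$ and that $f_{\tree_n}(\bX_i) = \bar{Y}_A$ for the unique leaf $A$ containing $\bX_i$, so that $\sum_{A \text{ leaf}} W(A) = \frac1n\sum_{i=1}^n (Y_i - f_{\tree_n}(\bX_i))^2 = R_n(f_{\tree_n})$. Rearranging gives \eqref{eq:imp_node_imp_leaves_emp}. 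I do not expect a genuine obstacle here: unlike Proposition~\ref{thm:sumImportance}, where the difficulty was controlling an infinite theoretical tree through truncation at level $k$, this empirical statement concerns a finite tree and reduces to the bookkeeping above; the only points deserving a line of care are the $0/0=0$ convention (ensuring $W$ and the node-level identity are well defined for degenerate splits) and the fact that the leaves genuinely partition the sample.
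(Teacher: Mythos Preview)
Your proof is correct and follows essentially the same telescoping argument as the paper. The paper's own proof is a single sentence referring back to the level-by-level telescoping $u_0 - u_K = \sum_k (u_k - u_{k+1})$ from the proof of Proposition~\ref{thm:sumImportance}, whereas you organize the same cancellation node-by-node via the identity $p_{n,A} L_{n,A} = W(A) - W(A_L) - W(A_R)$; both are the same bookkeeping, and your version is simply more explicit.
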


Note that equations \eqref{eq:imp_node_imp_leaves_pop} and \eqref{eq:imp_node_imp_leaves_emp} in Proposition~\ref{thm:sumImportance} and Proposition~\ref{thm:sumImportance_emp} are valid for general tree construction. The main argument of the proof is simply a telescopic sum of the MDI which links the variance of $Y$ in the root node of the tree to the variance of $Y$ in each terminal node. These equalities hold for general impurity measures by providing a relation between root impurity and leaves impurity. 
As in Proposition~\ref{thm:sumImportance}, Proposition~\ref{thm:sumImportance_emp} proves that the $R^2$ of a tree can be written as 
\begin{align}
\frac{\sum_{j=1}^d \empmdi_{\tree_n}(X^{(j)})}{\widehat{\mathds{V}[Y]}},
\end{align}
which allows us to see the sum of MDI as the percentage of variance explained by the model. This is particularly informative about the quality of the tree modelling when the depth $k$ of the tree is fixed. 

Trees are likely to overfit when they are fully grown. Indeed, the risk of trees which contain only one observation per leaf is exactly zero. Hence, according to equation~\eqref{eq:imp_node_imp_leaves_emp}, we have
\begin{align*}
\widehat{\mathds{V}[Y]} = \sum_{j=1}^d \empmdi_{\tree_n}(X^{(j)}) 
\end{align*}
and the $R^2$ of such tree is equal to one. Such $R^2$ does not mean that trees have good generalization error but that they have enough approximation capacity to overfit the data. Additionally, for a fully grown tree $\tree_n$, we have
\begin{align*}
\lim\limits_{n \to \infty} \sum_{j=1}^d \empmdi_{\tree_n}(X^{(j)}) = \mathds{V}[m(\bX)] + \sigma^2.
\end{align*}
For fully grown trees, the sum of MDI contains not only all available information $\mathds{V}[m(\bX)]$ but also the noise in the data. This implies that the MDI of some variables are higher than expected due to the noise in the data. 
The noise, when having a larger variance than the regression function, can induce a very important bias in the MDI by overestimating the importance of some variables. The bias of MDI has already been empirically studied. To circumvent the overfitting problem of MDI computations (using a single data set to build the trees and estimate the variable importances),  \cite{zhou2019unbiased} use a test set to compute MDI that are less biased. A quantification of the MDI bias has been proposed recently in \cite{li2019debiased} in which they show that deep trees tend to exhibit larger MDI bias for irrelevant variables. Their analysis provides finite-sample bounds for MDI of irrelevant variables whereas our analysis is asymptotic for both relevant and irrelevant variables.

The above discussion stresses out that, when interested by interpreting a decision tree, one must never use the MDI measures output by a fully grown tree. However, if the depth of the tree is fixed and large enough, the MDI output by the tree provides a good estimation of the population MDI as shown in Theorem~\ref{thm:sumImportance_emp2}.

\begin{model}
	\label{ass:uniform_input}
	The regression model satisfies $Y = m(\bX) + \varepsilon$ where $m$ is continuous, $\bX$ admits a density bounded from above and below by some positive constants and $\varepsilon$ is an independent Gaussian noise of variance $\sigma^2$. 
\end{model}

\begin{theorem}
	\label{thm:sumImportance_emp2}
Assume Model~\ref{ass:uniform_input} holds and that for all theoretical CART tree $\tree^{\star}$, almost surely, 
$$\lim_{k \to \infty} \Delta (m , A_{\tree_k^{\star}}(\bX)) = 0.$$ Let $\tree_n$ be the CART tree based on the data set $\mathcal{D}_n$.
Then, for all $\gamma>0, \rho \in (0,1]$, there exists $K \in \mathds{N}^{\star}$ such that, for all $k>K$, for all $n$ large enough, with probability at least $1-\rho$, 
\begin{align*}
\Big| \sum_{j=1}^d \empmdi_{\tree_{n,k}}(X^{(j)}) - \mathds{V}[m(\bX)]   \Big| \leq \gamma.
\end{align*}
\end{theorem}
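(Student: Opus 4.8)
The plan is to combine the exact identity of Proposition~\ref{thm:sumImportance_emp}, applied to the truncated tree $\tree_{n,k}$ (legitimate because, as noted right after Proposition~\ref{thm:sumImportance_emp}, the identity~\eqref{eq:imp_node_imp_leaves_emp} holds for an arbitrary tree construction), with a control of the two terms it produces. Proposition~\ref{thm:sumImportance_emp} gives
\[
\sum_{j=1}^d \empmdi_{\tree_{n,k}}(X^{(j)}) = \widehat{\mathds{V}[Y]} - R_n(f_{\tree_{n,k}}),
\]
and, under Model~\ref{ass:uniform_input}, $\mathds{V}[Y] = \mathds{V}[m(\bX)] + \sigma^2$; so it suffices to prove that $\widehat{\mathds{V}[Y]} \to \mathds{V}[Y]$ and that $R_n(f_{\tree_{n,k}})$ can be forced within $\gamma/4$ of $\sigma^2$, with probability at least $1-\rho$, by choosing first $k$ and then $n$ large. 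The first point is the law of large numbers ($\mathds{E}[Y^2]<\infty$). Everything therefore reduces to controlling the training error $R_n(f_{\tree_{n,k}})$ of the depth-$k$ empirical CART tree.

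For this I would decompose
\[
R_n(f_{\tree_{n,k}}) - \sigma^2 = \underbrace{\big(R_n(f_{\tree_{n,k}}) - \bar R(f_{\tree_{n,k}})\big)}_{(\mathrm{I})} + \underbrace{\big(\bar R(f_{\tree_{n,k}}) - \mathds{E}_{\bX'}\big[\mathds{V}[Y\,|\,\bX\in A_{\tree_k^{\star}}(\bX')]\big]\big)}_{(\mathrm{II})} + \underbrace{\big(\mathds{E}_{\bX'}\big[\mathds{V}[Y\,|\,\bX\in A_{\tree_k^{\star}}(\bX')]\big] - \sigma^2\big)}_{(\mathrm{III})},
\]
where $\bar R(g) := \mathds{E}[(Y-g(\bX))^2 \mid \mathcal{D}_n]$ is the generalization error of a data-dependent predictor $g$ and $\tree_k^{\star}$ is a theoretical CART tree truncated at level $k$. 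Term $(\mathrm{III})$ is purely population: since $\mathds{V}[\varepsilon|\bX]=\sigma^2$ one has $\mathds{V}[Y|\bX\in A] = \Delta(m,A) + \sigma^2$, hence $(\mathrm{III}) = \mathds{E}_{\bX'}[\Delta(m,A_{\tree_k^{\star}}(\bX'))]$; as $m$ is continuous on the compact $[0,1]^d$ it is bounded, so $\Delta(m,\cdot)\le \|m\|_\infty^2$, and the hypothesis $\Delta(m,A_{\tree_k^{\star}}(\bX'))\to 0$ almost surely, combined with dominated convergence, makes $(\mathrm{III})\le\gamma/4$ as soon as $k>K$ --- this is the step that fixes $K$ before $n$ and where the consistency assumption enters (exactly as in Corollary~\ref{cor:sumImportance}). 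Term $(\mathrm{I})$ is a uniform-deviation term: $f_{\tree_{n,k}}$ lies in the class $\mathcal{G}_k$ of estimators built on partitions of $[0,1]^d$ into at most $2^k$ axis-parallel cells with (bounded, after the standard truncation exploiting the Gaussian tails) leaf averages, a class of finite VC/bracketing complexity for fixed $k$; a uniform law of large numbers over $\mathcal{G}_k$ then gives $\sup_{g\in\mathcal{G}_k}|R_n(g)-\bar R(g)|\le\gamma/4$ with probability $\ge 1-\rho/3$ for $n$ large, hence the same for $(\mathrm{I})$.

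The genuinely delicate term is $(\mathrm{II})$, which requires that the depth-$k$ empirical tree is, for $n$ large, close to some depth-$k$ theoretical tree, both in the geometry of its cells and in its leaf values. I would establish this along the now-classical route: prove (or invoke a technical lemma giving) that the empirical CART criterion $L_{n,A}(j,z)$ converges uniformly in $(j,z)$, and across the finitely many cells of a depth-$k$ tree, to the population criterion $L^{\star}_A(j,z)$ --- here Model~\ref{ass:uniform_input} is used, in particular the density bounded away from $0$, which keeps every cell of positive mass and makes the cell averages $\bar Y_A$ concentrate around $\mathds{E}[Y|\bX\in A]$ --- and deduce that the empirically selected split directions and positions converge to those of a theoretical tree $\tree_k^{\star}$. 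Consequently $f_{\tree_{n,k}}(\bX')\to \mathds{E}[Y|\bX\in A_{\tree_k^{\star}}(\bX')]$ and $\bar R(f_{\tree_{n,k}})\to \mathds{E}_{\bX'}[\mathds{V}[Y|\bX\in A_{\tree_k^{\star}}(\bX')]]$, so $(\mathrm{II})\le\gamma/4$ with probability $\ge 1-\rho/3$ for $n$ large. Some bookkeeping is needed because the population argmax may have ties, so several theoretical trees exist: one notes that the empirical choices still converge to \emph{one} of them and that the bound on $(\mathrm{III})$ obtained above holds for \emph{every} theoretical tree (up to a covering of this finite-depth family), so a single $K$ works. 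A union bound over $(\mathrm{I})$, $(\mathrm{II})$ and $|\widehat{\mathds{V}[Y]}-\mathds{V}[Y]|\le\gamma/4$ then yields the statement. I expect step $(\mathrm{II})$ --- the transfer from the empirical CART partition to a theoretical one at fixed depth --- to be the main obstacle, since this is exactly where the regularity of Model~\ref{ass:uniform_input} and a careful uniform control of the split criterion are indispensable.
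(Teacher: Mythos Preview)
Your route differs from the paper's: instead of decomposing the MDI directly, you pass through the identity of Proposition~\ref{thm:sumImportance_emp} and reduce everything to controlling the training risk $R_n(f_{\tree_{n,k}})$. This is legitimate and arguably cleaner; your three-term split $(\mathrm{I})+(\mathrm{II})+(\mathrm{III})$ isolates the same difficulties the paper faces (concentration of empirical quantities, closeness of the empirical partition to a theoretical one, consistency of theoretical trees), just organised around the risk rather than around the MDI. The paper instead bounds $\inf_{\tree^\star}|\empmdi_{\tree_{n,K}} - \mdi_{\tree^\star_K}|$ through a four-term decomposition at the MDI level, citing Lemma~3 of \cite{ScBiVe15} for the partition convergence and Corollary~11 of \cite{wager2015adaptive} for the concentration step --- essentially the same ingredients you sketch for $(\mathrm{II})$ and $(\mathrm{I})$.

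There is, however, a real gap in how you handle the order of quantifiers. You need a \emph{single} $K$ such that $(\mathrm{III})\le\gamma/4$ for \emph{every} theoretical tree $\tree^\star$, because the tree appearing in $(\mathrm{II})$ is whichever theoretical tree the empirical one happens to approximate, and this may vary with $n$ (ties in the population criterion can be broken differently along subsequences). Your sentence ``the bound on $(\mathrm{III})$ obtained above holds for every theoretical tree (up to a covering of this finite-depth family), so a single $K$ works'' does not deliver this: dominated convergence gives, for each tree, a $K$ depending on that tree, and a covering at a \emph{fixed} depth cannot manufacture uniformity in the depth itself. This uniform consistency is exactly the first and longest step of the paper's proof: it encodes every tree as a sequence lying in a compact ball of $\ell^2(\mathds{N})$, shows that the sets $\mathcal{A}_{k,\gamma}=\{\tree^\star:\mathds{E}[\Delta(m,A_{\tree^\star_k}(\bX))]\ge\gamma\}$ are closed and nested, and uses Cantor's intersection theorem to conclude that some $\mathcal{A}_{K,\gamma}$ is empty --- i.e., one $K$ works for all theoretical trees simultaneously. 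Without this compactness argument (or an equivalent), your choice of $K$ is not well defined and the proof does not close.
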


As for Corollary~\ref{cor:sumImportance}, Theorem~\ref{thm:sumImportance_emp2} relies on the consistency of theoretical trees, which is essential to obtain positive results on tree interpretability. It is worth noticing that Theorem~\ref{thm:sumImportance_emp2} is not a straightforward extension of Corollary~\ref{cor:sumImportance}. The proof is based on the uniform consistency of theoretical trees and combines several recent results on tree partitions to establish the consistency of empirical CART tree. 

It is not possible in general to make explicit the contribution of each MDI to the sum. In other words, it is not easy to find an explicit expression of the MDI of each variable. This is due to interactions and correlation between inputs, which make difficult to distinguish the impact of each variable. 
Nevertheless, when the regression model can be decomposed into a sum of two independent terms, we can obtain more precise information on MDI. To this aim, let, for any set $\mathcal{J}= \{j_1, \hdots, j_J\} \subset \{1, \hdots, d\}$, $\bX^{(\mathcal{J})} = (X^{(j_1)}, \hdots, X^{(j_J)})$.

\begin{model}
	\label{ass:decomp_m_sum}
	There exists $\mathcal{J} \subset  \{1, \hdots , d \}$ such that $\bX^{(\mathcal{J})}$ and $\bX^{(\mathcal{J}^c)}$ are independent and 
	\begin{align}
	Y = m_{\mathcal{J}}(\bX^{(\mathcal{J})}) +  m_{\mathcal{J}^c}(\bX^{(\mathcal{J}^c)}) + \varepsilon, \label{eq:partial_add_model}
	\end{align}
	where $m_{\mathcal{J}}$ and $m_{\mathcal{J}^c}$ are continuous functions, $\varepsilon$ is a Gaussian noise $\mathcal{N}(0, \sigma^2)$ independent of $\bX$ and $\bX$ admits a density bounded from above and below by some positive constants. 
\end{model}

\begin{lemme}
	\label{Lem:decompt_th_3}
Assume Model~\ref{ass:decomp_m_sum} holds. Then, for all $j \in \mathcal{J}$, the criterion $L^{\star}_{A}(j,s) $ does not depend on $m_{\mathcal{J}^c}$ and is equal to $L^{\star}_{A^{(\mathcal{J})}}(j,s) $. Besides, any split $j \in \mathcal{J}$ leaves the variance of $m_{\mathcal{J}^c}$ unchanged.
\end{lemme}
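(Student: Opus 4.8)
The plan is to exploit the product structure of the cells of a CART tree together with the independence of $\bX^{(\mathcal{J})}$ and $\bX^{(\mathcal{J}^c)}$. Write $U = m_{\mathcal{J}}(\bX^{(\mathcal{J})})$ and $V = m_{\mathcal{J}^c}(\bX^{(\mathcal{J}^c)})$, so that $Y = U + V + \e$. Any cell $A$ is an axis-aligned hyperrectangle, hence (after reordering coordinates) factorizes as $A = A^{(\mathcal{J})} \times A^{(\mathcal{J}^c)}$; moreover, for a split along a coordinate $j \in \mathcal{J}$ at position $s$, the two children are $A_L = A_L^{(\mathcal{J})} \times A^{(\mathcal{J}^c)}$ and $A_R = A_R^{(\mathcal{J})} \times A^{(\mathcal{J}^c)}$, i.e. they share the same $\mathcal{J}^c$-face and differ only through their $\mathcal{J}$-face. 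The first step is to record the elementary fact that, since the law of $\bX$ is a product measure on the two blocks, conditioning on any of the product events $\{\bX \in A\}$, $\{\bX \in A_L\}$, $\{\bX \in A_R\}$ keeps the two blocks independent, with the $\mathcal{J}^c$-marginal equal to $\bX^{(\mathcal{J}^c)} \mid \{\bX^{(\mathcal{J}^c)} \in A^{(\mathcal{J}^c)}\}$ in all three cases and the $\mathcal{J}$-marginal equal to the corresponding conditioning of $\bX^{(\mathcal{J})}$. In particular $\V[V | \bX \in A_L] = \V[V | \bX \in A_R] = \V[V | \bX \in A]$, which already establishes the last sentence of the lemma.

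Next I would use this to decompose every conditional variance appearing in $L_A^{\star}(j,s)$. Since $\e$ is independent of $\bX$ with mean zero, it is, conditionally on any of the above events, centered and uncorrelated with any function of $\bX$; since the two coordinate blocks stay independent after conditioning, $U$ and $V$ are conditionally uncorrelated. Hence, for $A' \in \{A, A_L, A_R\}$,
\begin{align*}
\V[Y | \bX \in A'] = \V[U | \bX \in A'] + \V[V | \bX \in A'] + \sigma^2 = \V[U | \bX \in A'] + \V[V | \bX \in A] + \sigma^2 .
\end{align*}
Plugging these three identities into the definition \eqref{eq:theoretical_cart_crit} of $L_A^{\star}(j,s)$ and using $\P[\bX^{(j)}<s\,|\,\bX\in A] + \P[\bX^{(j)}\ge s\,|\,\bX\in A] = 1$, the common terms $\V[V | \bX \in A] + \sigma^2$ cancel, leaving
\begin{align*}
L_A^{\star}(j,s) = \V[U | \bX\in A] - \P[\bX^{(j)}<s\,|\,\bX\in A]\,\V[U | \bX\in A_L] - \P[\bX^{(j)}\ge s\,|\,\bX\in A]\,\V[U | \bX\in A_R],
\end{align*}
an expression that involves only $m_{\mathcal{J}}$ and the $\mathcal{J}$-coordinates of $\bX$; this proves that $L_A^{\star}(j,s)$ does not depend on $m_{\mathcal{J}^c}$.

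Finally I would identify the right-hand side with $L^{\star}_{A^{(\mathcal{J})}}(j,s)$. Because $U$ depends on $\bX$ only through $\bX^{(\mathcal{J})}$, the step-one description of the conditional laws gives $\V[U | \bX\in A] = \V[m_{\mathcal{J}}(\bX^{(\mathcal{J})}) \,|\, \bX^{(\mathcal{J})}\in A^{(\mathcal{J})}]$, and likewise for $A_L$, $A_R$ with their $\mathcal{J}$-projections, while the split proportions equal $\P[(\bX^{(\mathcal{J})})^{(j)}<s\,|\,\bX^{(\mathcal{J})}\in A^{(\mathcal{J})}]$; since a noise term in the reduced model contributes the same $\sigma^2$ to each variance and cancels as above, the displayed expression is exactly the population CART criterion of the reduced regression problem with inputs $\bX^{(\mathcal{J})}$ evaluated in the cell $A^{(\mathcal{J})}$, i.e. $L^{\star}_{A^{(\mathcal{J})}}(j,s)$. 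I expect the only delicate point to be the bookkeeping of the first step: checking that cells really stay axis-aligned hyperrectangles, that conditioning on a product set preserves the block factorization and the $\mathcal{J}^c$-marginal, and that all conditioning events have positive probability (which follows from $\bX$ having a density bounded below and cells having positive volume). Once that is in place, the cancellation in the second and third steps is routine.
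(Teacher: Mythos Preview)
Your proposal is correct and follows essentially the same argument as the paper's proof: both exploit the product structure $A = A^{(\mathcal{J})} \times A^{(\mathcal{J}^c)}$ of CART cells together with the independence of the two coordinate blocks to decompose each conditional variance of $Y$ into a $m_{\mathcal{J}}$-part, a $m_{\mathcal{J}^c}$-part, and $\sigma^2$, then observe that the latter two coincide in $A$, $A_L$, $A_R$ and hence cancel in $L_A^{\star}(j,s)$, leaving precisely $L^{\star}_{A^{(\mathcal{J})}}(j,s)$. Your write-up is a bit more explicit than the paper's about why conditioning on a product set preserves the block factorization and about the role of the noise, but the logic is identical.
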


Lemma~\ref{Lem:decompt_th_3} will be used to prove Proposition~\ref{prop:th_decomp_th3} below but is informative on its own. It states that if the regression model can be decomposed as in Model~\ref{ass:decomp_m_sum}, the best split along coordinates in $\mathcal{J}$ does not depend on $m_{\mathcal{J}^c}$ or on the ranges of coordinates in $\mathcal{J}^c$.

\begin{proposition}
	\label{prop:th_decomp_th3}
Assume Model~\ref{ass:decomp_m_sum} holds. Let $\tree^{\star}_{\mathcal{J}}$ be a tree built on distribution $(\bX^{(\mathcal{J})},$ $ m_{\mathcal{J}}(\bX^{(\mathcal{J})}))$  and let $\tree^{\star}_{\mathcal{J},k}$ be the truncation of the tree at level $k$. Similarly, let $\tree^{\star}_{\mathcal{J}^c}$ be a tree built on distribution  $(\bX^{(\mathcal{J}^c)}, m_{\mathcal{J}^c}(\bX^{(\mathcal{J}^c)}))$  and let $\tree^{\star}_{\mathcal{J}^c, k}$ be the truncation of the tree at level $k$.  If for any tree $\tree^{\star}_{\mathcal{J}}$ and $\tree^{\star}_{\mathcal{J}^c}$, almost surely, 
	\begin{align*}
	\lim\limits_{k \to \infty} \Delta (m_{\mathcal{J}} , A_{\tree_{\mathcal{J},k}^{\star}}(\bX^{(\mathcal{J})})) = 0, \quad \textrm{and} \quad \lim\limits_{k \to \infty} \Delta (m_{\mathcal{J}^c} , A_{\tree_{\mathcal{J}^c,k}^{\star}}(\bX^{(\mathcal{J}^c)})) = 0
	\end{align*}
	then, for any tree $\tree^{\star}$ built on the original distribution $(\bX,Y)$, almost surely,
	\begin{align*}
	\lim\limits_{k \to \infty} \Delta (m , A_{\tree_k^{\star}}(\bX)) = 0.
	\end{align*}
\end{proposition}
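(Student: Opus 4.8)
The plan is to reduce the consistency of the tree $\tree^\star$ built on $(\bX,Y)$ to the two assumed sub-tree consistencies, exploiting two structural facts. First, under Model~\ref{ass:decomp_m_sum} we have $m(\bX) = m_{\mathcal{J}}(\bX^{(\mathcal{J})}) + m_{\mathcal{J}^c}(\bX^{(\mathcal{J}^c)})$ with $\bX^{(\mathcal{J})}$ and $\bX^{(\mathcal{J}^c)}$ independent; since the cells of $\tree^\star$ are hyper-rectangles, any cell $A$ factors as $A^{(\mathcal{J})} \times A^{(\mathcal{J}^c)}$, and conditioning on the product event $\{\bX \in A\}$ preserves independence, so $\Delta(m, A) = \Delta(m_{\mathcal{J}}, A^{(\mathcal{J})}) + \Delta(m_{\mathcal{J}^c}, A^{(\mathcal{J}^c)})$. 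Second, Lemma~\ref{Lem:decompt_th_3} says that every split $\tree^\star$ performs along a coordinate of $\mathcal{J}$ (resp. $\mathcal{J}^c$) has a criterion that depends only on $A^{(\mathcal{J})}$ (resp. $A^{(\mathcal{J}^c)}$), and is in fact a CART-optimal split for the $\mathcal{J}$-subproblem on $A^{(\mathcal{J})}$ (resp. the $\mathcal{J}^c$-subproblem on $A^{(\mathcal{J}^c)}$). Consequently the sub-sequence of $\mathcal{J}$-splits performed by $\tree^\star$ along the path of a point $\bx$ is a legitimate sequence of CART-optimal splits for the $\mathcal{J}$-subproblem started at $[0,1]^{|\mathcal{J}|}$, so it coincides with (the initial part of) the path of $\bx^{(\mathcal{J})}$ in some theoretical tree $\tree^\star_{\mathcal{J}}$, with matching projected cells — and symmetrically for $\mathcal{J}^c$.

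Fixing $\tree^\star$ and $\bx$, it suffices by the above decomposition to show $\Delta(m_{\mathcal{J}}, A^{(\mathcal{J})}_{\tree_k^\star}(\bx)) \to 0$ and $\Delta(m_{\mathcal{J}^c}, A^{(\mathcal{J}^c)}_{\tree_k^\star}(\bx)) \to 0$; by symmetry I treat the first and split into two cases. If $\tree^\star$ performs infinitely many $\mathcal{J}$-splits on the path of $\bx$, the projected cells run through the cells of a $\tree^\star_{\mathcal{J}}$-path and the assumed consistency of $\tree^\star_{\mathcal{J}}$ gives the conclusion. If only finitely many $\mathcal{J}$-splits occur, the projected cell stabilises, say to $A^{(\mathcal{J})}_\star$, and one must prove $\Delta(m_{\mathcal{J}}, A^{(\mathcal{J})}_\star) = 0$. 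The key step is to argue that $A^{(\mathcal{J})}_\star$ has zero best split criterion for the $\mathcal{J}$-subproblem: either the path of $\bx$ in $\tree^\star$ is finite, ending at a leaf, which has zero best criterion, so by Lemma~\ref{Lem:decompt_th_3} so does $A^{(\mathcal{J})}_\star$; or the path is infinite, hence contains infinitely many $\mathcal{J}^c$-splits, so $\Delta(m_{\mathcal{J}^c}, A^{(\mathcal{J}^c)}_{\tree_k^\star}(\bx)) \to 0$ by consistency of $\tree^\star_{\mathcal{J}^c}$, which forces the $\mathcal{J}^c$-split criteria along the path to tend to $0$ (each is bounded by the variance of its cell); but from some level on every split on the path is a $\mathcal{J}^c$-split whose criterion, being globally maximal, dominates the best $\mathcal{J}$-criterion of the cell, which by Lemma~\ref{Lem:decompt_th_3} equals the (now constant) best $\mathcal{J}$-subproblem criterion of $A^{(\mathcal{J})}_\star$, forcing that constant to be $0$. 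In both situations $A^{(\mathcal{J})}_\star$ is a zero-criterion cell reached by CART-optimal $\mathcal{J}$-splits, hence a leaf of some $\tree^\star_{\mathcal{J}}$; applying the assumed consistency of that tree at a point of $A^{(\mathcal{J})}_\star$ (where the limit is the constant $\Delta(m_{\mathcal{J}}, A^{(\mathcal{J})}_\star)$) forces $\Delta(m_{\mathcal{J}}, A^{(\mathcal{J})}_\star) = 0$. The same argument handles the $\mathcal{J}^c$-component, and adding the two limits yields $\Delta(m, A_{\tree_k^\star}(\bx)) \to 0$.

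I expect the finitely-many-$\mathcal{J}$-splits case to be the main obstacle: it is precisely the possibility that $\tree^\star$ permanently abandons a $\mathcal{J}$-coordinate while $m_{\mathcal{J}}$ still carries variance, and ruling it out is where both assumptions enter essentially — consistency of $\tree^\star_{\mathcal{J}^c}$ to drive the competing $\mathcal{J}^c$-criteria to zero, and consistency of $\tree^\star_{\mathcal{J}}$ to identify a vanishing-criterion cell with a vanishing-variance cell. The remaining work is bookkeeping: transferring the almost-sure consistency of the sub-trees to the particular point $\bx$ (immediate in the stabilising case, since the relevant projected cell is then fixed, and a matter of following the split sub-sequence otherwise), verifying that conditioning on a product event preserves the independence of the two blocks, and using the convention that a cell with vanishing split criterion is not split further.
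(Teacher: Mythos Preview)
Your proposal is correct and follows essentially the same approach as the paper: decompose $\Delta(m,A)$ into the $\mathcal{J}$- and $\mathcal{J}^c$-parts via independence and the product structure of cells, identify the subsequence of $\mathcal{J}$-splits along the path of $\bx$ with the path in some theoretical tree $\tree^\star_{\mathcal{J}}$ using Lemma~\ref{Lem:decompt_th_3}, and then split into cases according to whether each subsequence is infinite. Your treatment of the ``finitely many $\mathcal{J}$-splits'' case---showing that the stabilised cell $A^{(\mathcal{J})}_\star$ has zero best-split criterion because it is eventually dominated by $\mathcal{J}^c$-criteria that tend to zero, and then invoking consistency of $\tree^\star_{\mathcal{J}}$ at a leaf---mirrors exactly the paper's argument (with the roles of $\mathcal{J}$ and $\mathcal{J}^c$ swapped, which is immaterial by symmetry).
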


Proposition~\ref{prop:th_decomp_th3} states that the consistency of any theoretical trees results from the consistency of all theoretical trees built on subsets ($\mathcal{J}$ and $\mathcal{J}^c$) of the original set of variables, under Model~\ref{ass:decomp_m_sum}. This is of particular significance for proving the consistency of theoretical tree for a new model which can be rewritten as a sum of independent regression functions: we can then deduce the tree consistency from existing consistency results for each regression function. Note that there are no structural assumptions on the functions $m_{\mathcal{J}}$ and $m_{\mathcal{J}^c}$ apart from the continuity. In particular, the class of functions in Model~\ref{ass:decomp_m_sum} is wider than additive functions. Note also that importance measures via random forests have been designed for groups of variables. For instance, \citet{gregorutti2015grouped} extended the MDA to groups of variables and studied it theoretically in an additive regression framework. More recently, a tree algorithm tailored for groups of variables have been proposed by \citet{poterie2019classification}.


Theorem~\ref{thm:mdi_interaction} below establishes the limiting value of the sum of MDI in subgroups $\mathcal{J}$ and $\mathcal{J}^c$, both for the population version and the empirical version of the MDI, as soon as all theoretical trees are consistent. 
 
\begin{theorem}\label{thm:mdi_interaction}
	Under the assumptions of Proposition~\ref{prop:th_decomp_th3}, for all $\gamma >0$, there exists $K$ such that, for all $k>K$,
		\begin{align*}
		\Big| \sum_{j \in \mathcal{J}} \mdi_{\tree^{\star}_k}(X^{(j)}) - \mathds{V}[m_{\mathcal{J}}(\bX^{(\mathcal{J})})]\Big| \leq \gamma,
		\end{align*}
		and
		\begin{align*}
		\Big| \sum_{j \in \mathcal{J}^c} \mdi_{\tree^{\star}_k}(X^{(j)}) - \mathds{V}[m_{\mathcal{J}^c}(\bX^{(\mathcal{J}^c)})] \Big| \leq \gamma.
		\end{align*}
		
		Besides, for all $\gamma>0, \rho \in (0,1]$, there exists $K$ such that, for all $k>K$, for all $n$ large enough, with probability at least $1 - \rho$,
		\begin{align*}
		\Big| \sum_{j \in \mathcal{J}} \empmdi_{\tree_{n,K}}(X^{(j)}) - \mathds{V}[m_{\mathcal{J}}(\bX^{(\mathcal{J})})]\Big| \leq \gamma,
		\end{align*}
		and
		\begin{align*}
		\Big| \sum_{j \in \mathcal{J}^c} \empmdi_{\tree_{n,K}}(X^{(j)}) - \mathds{V}[m_{\mathcal{J}^c}(\bX^{(\mathcal{J}^c)})] \Big| \leq \gamma.
		\end{align*}	
\end{theorem}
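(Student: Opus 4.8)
The plan is to prove the population statement first and then bootstrap the empirical one from it, in the same way Corollary~\ref{cor:sumImportance} is upgraded to Theorem~\ref{thm:sumImportance_emp2}. For the population part, the key is the product structure of the cells of an axis-aligned tree: every cell $A$ of a theoretical tree $\tree^{\star}$ factorizes as $A = A^{(\mathcal{J})}\times A^{(\mathcal{J}^c)}$, and since under Model~\ref{ass:decomp_m_sum} the blocks $\bX^{(\mathcal{J})}$, $\bX^{(\mathcal{J}^c)}$ and $\varepsilon$ are mutually independent, conditioning on $\{\bX\in A\}$ keeps them independent, so that $\mathds{V}[Y\mid \bX\in A] = \Delta(m_{\mathcal{J}},A) + \Delta(m_{\mathcal{J}^c},A) + \sigma^2$, where $\Delta(m_{\mathcal{J}},A)$ denotes the variance of $m_{\mathcal{J}}(\bX^{(\mathcal{J})})$ inside $A$ (with $m_{\mathcal{J}}$ viewed as a function on $[0,1]^d$). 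By Lemma~\ref{Lem:decompt_th_3} (and its symmetric $\mathcal{J}\leftrightarrow\mathcal{J}^c$ version), a split along $j\in\mathcal{J}$ leaves $\Delta(m_{\mathcal{J}^c},\cdot)$ unchanged on both children, so the weighted impurity decrease it induces equals $g(A)-g(A_L)-g(A_R)$ with $g(A):=p^{\star}_A\,\Delta(m_{\mathcal{J}},A)$, while a split along $j\in\mathcal{J}^c$ contributes nothing to $g$. Telescoping $g$ over the internal nodes of $\tree^{\star}_k$ yields
\[
\sum_{j\in\mathcal{J}}\mdi_{\tree^{\star}_k}(X^{(j)}) = \mathds{V}[m_{\mathcal{J}}(\bX^{(\mathcal{J})})] - \mathds{E}_{\bX'}\big[\Delta(m_{\mathcal{J}},A_{\tree^{\star}_k}(\bX'))\big],
\]
and symmetrically for $\mathcal{J}^c$. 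To close the population part, note that $\Delta(m,A)=\Delta(m_{\mathcal{J}},A)+\Delta(m_{\mathcal{J}^c},A)$ with both summands nonnegative, so the conclusion of Proposition~\ref{prop:th_decomp_th3} — valid under exactly the hypotheses of the theorem — forces $\Delta(m_{\mathcal{J}},A_{\tree^{\star}_k}(\bX))\to 0$ almost surely; since $m_{\mathcal{J}}$ is continuous on the compact $[0,1]^{|\mathcal{J}|}$, hence bounded, $\Delta(m_{\mathcal{J}},A)\le\|m_{\mathcal{J}}\|_{\infty}^2$ and dominated convergence gives $\mathds{E}_{\bX'}[\Delta(m_{\mathcal{J}},A_{\tree^{\star}_k}(\bX'))]\to 0$, which proves the first inequality, and the second one the same way. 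The cutoff $K$ can be chosen uniformly over all theoretical trees by invoking the uniform consistency of theoretical trees already used for Theorem~\ref{thm:sumImportance_emp2}.

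For the empirical part, I would reuse the machinery of Theorem~\ref{thm:sumImportance_emp2}. Fix $K$ as above; the task is to show that, for $n$ large, with probability at least $1-\rho$, the empirical CART tree truncated at a fixed level $k>K$ is close enough to some theoretical tree $\tree^{\star}$ that $\empmdi_{\tree_{n,k}}(X^{(j)})$ is within $\gamma/(2d)$ of $\mdi_{\tree^{\star}_k}(X^{(j)})$ for every $j$, crucially with agreement of the selected split directions, so that the partition of $\{1,\dots,d\}$ into $\mathcal{J}$ and $\mathcal{J}^c$ is preserved in the limit. As for Theorem~\ref{thm:sumImportance_emp2}, this follows from (i) a uniform (over cells of depth $\le k$ and over candidate cuts) law of large numbers for $L_{n,A}$ towards $L^{\star}_A$, using the lower bound on the density in Model~\ref{ass:decomp_m_sum} to keep a number of observations proportional to $n$ in every such cell, together with the continuity of $m$; (ii) the resulting convergence of the selected cut positions, with the standard argument for attainment and generic uniqueness of the theoretical argmax; and (iii) an induction on depth up to $k$ that propagates these convergences and the convergence of the empirical means and variances to their population counterparts, giving $\empmdi_{\tree_{n,k}}(X^{(j)})\to\mdi_{\tree^{\star}_k}(X^{(j)})$ in probability. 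A triangle inequality with the population bound at level $k$ then yields the two empirical inequalities.

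The genuinely delicate step is this empirical transfer: one must control all cells of $\tree_{n,k}$ at once, show that the empirically selected split directions eventually coincide with the theoretical ones (hence lie in $\mathcal{J}$ or $\mathcal{J}^c$ exactly as in the limit), and cope with possible non-uniqueness of theoretical splits. This is precisely where the apparatus behind Theorem~\ref{thm:sumImportance_emp2} — uniform consistency of theoretical trees plus recent results on CART partitions — does the heavy lifting; the only genuinely new ingredient is the bookkeeping that splits the MDI sum along $\mathcal{J}/\mathcal{J}^c$, and it survives passing to the limit exactly because the split directions do.
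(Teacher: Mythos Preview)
Your proposal is correct and follows essentially the same route as the paper. The population argument is identical in substance: the paper introduces $u_{k,\mathcal{J}}=\sum_{\ell} p^{\star}_{A_{\ell,k}}\,\Delta(m_{\mathcal{J}},A_{\ell,k})$, which is exactly your $\sum_{\text{leaves}} g(A)$, uses Lemma~\ref{Lem:decompt_th_3} to show that $u_{k,\mathcal{J}}-u_{k+1,\mathcal{J}}$ collects precisely the weighted impurity drops of the $\mathcal{J}$-splits (your telescoping of $g$), and then bounds $\Delta(m_{\mathcal{J}},A)\le \Delta(m,A)$ to invoke Proposition~\ref{prop:th_decomp_th3} and dominated convergence. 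For the empirical part the paper is terser than you are---it simply says the uniform consistency over theoretical trees and the passage from population to empirical MDI ``can be done following the same reasoning as in the proof of Theorem~\ref{thm:sumImportance_emp2}''---so your explicit remark that one must match empirical and theoretical split \emph{directions} (not only positions) in order to preserve the $\mathcal{J}/\mathcal{J}^c$ bookkeeping is a genuine point the paper leaves implicit.
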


According to Theorem~\ref{thm:mdi_interaction}, the limiting value of the MDI for subgroup $\mathcal{J}$ (resp. subgroup $\mathcal{J}^c$) is equal to the variance of the corresponding component of the regression function, that is $\mathds{V}[m_{\mathcal{J}}(\bX^{(\mathcal{J})})]$ (resp. $\mathds{V}[m_{\mathcal{J}^c}(\bX^{(\mathcal{J}^c)})]$). We will use Theorem~\ref{thm:mdi_interaction} in the Section~\ref{sec:additive_models} to derive explicit expressions for the MDI of each variable, assuming a specific form of the regression function. 

\section{Additive Models}
\label{sec:additive_models}

To pursue our analysis, we consider in this section a specific type of regression models: additive regression functions with independents inputs. 

\begin{model}[Additive model]
	\label{ass:gaussian_noise}
	The regression model writes
	\begin{align*}
	Y = \sum_{j=1}^d m_j(X^{(j)}) + \varepsilon,
	\end{align*}
	where each $m_j$ is continuous; $\varepsilon$ is a  Gaussian noise $\mathcal{N}(0,\sigma^2)$, independent of $\bX$; and $ \bX \sim \mathcal{U}([0,1]^d)$. 
\end{model}

Additive regression models were popularized by \citet{St85} \citep[see also ][]{hastie2017generalized}. They were used as a generic framework for studying random forests in some previous works \citet{klusowski2020sparse, ScBiVe15}, notably due to their simple structure and their absence of interactions. Indeed, due to the model additivity, there is no interaction between variables: the contribution of each variable for predicting the output is reduced to its marginal contribution. Since we also assume that features are independent, the information contained in one variable cannot be inferred by the values of other variables. 

By considering a model with no interaction and with independent inputs, we know that the contribution of a variable has an intrinsic definition which does not depend on other variables that are used to build the model.  In Model~\ref{ass:gaussian_noise}, the explained variance of the model $\mathds{V}[m(\bX)]$ takes the form 
\begin{align*}
\mathds{V}[m(\bX)] = \sum_{j=1}^d \mathds{V}[m_j(X^{(j)})],
\end{align*} 
which holds only because of independent inputs \textit{and} the absence of interactions. The variance explained by the $j$th variable is defined unambiguously, and independently of which variables are considered in the model, as $\mathds{V}[m_j(X^{(j)})]$. Therefore any importance measure for $X^{(j)}$ which aims at decomposing the explained variance must output $\mathds{V}[m_j(X^{(j)})]$. It turns out that the MDI computed via CART trees converges to this quantity.

\begin{theorem}[Additive model]
\label{thm:additive_models}
Assume that Model~\ref{ass:gaussian_noise} holds. Let $\tree^{\star}$ be any theoretical CART tree and $\tree_n$ be the empirical CART tree. Then, for all $\gamma >0$, there exists $K$ such that, for all $k>K$, for all $j$,
\begin{align*}
\Big| \mdi_{\tree^{\star}_k}(X^{(j)}) - \mathds{V}[m_j(X^{(j)})] \Big| \leq \gamma,
\end{align*}
Moreover, for all $\gamma>0, \rho \in (0,1]$, there exists $K$ such that, for all $k>K$, for all $n$ large enough, with probability at least $1 - \rho$, for all $j$,
\begin{align*}
\Big| \empmdi_{\tree_{n,k}}(X^{(j)}) - \mathds{V}[m_j(X^{(j)})] \Big| \leq \gamma.
\end{align*}
\end{theorem}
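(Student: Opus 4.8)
The plan is to recognize Model~\ref{ass:gaussian_noise} as a special case of Model~\ref{ass:decomp_m_sum} and to invoke Theorem~\ref{thm:mdi_interaction} with the \emph{singleton} group $\mathcal{J}=\{j\}$. Fix $j\in\{1,\dots,d\}$ and set $\mathcal{J}=\{j\}$, $\mathcal{J}^c=\{1,\dots,d\}\setminus\{j\}$. Since the coordinates of $\bX$ are i.i.d.\ uniform, $\bX^{(\mathcal{J})}$ and $\bX^{(\mathcal{J}^c)}$ are independent, and $Y=m_j(X^{(j)})+\sum_{l\neq j}m_l(X^{(l)})+\varepsilon$ is exactly the decomposition \eqref{eq:partial_add_model} with $m_{\mathcal{J}}=m_j$ and $m_{\mathcal{J}^c}=\sum_{l\neq j}m_l$, both continuous; the noise is Gaussian and independent of $\bX$, and the uniform density is bounded above and below. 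So Model~\ref{ass:decomp_m_sum} holds for this partition, and once the consistency hypotheses of Proposition~\ref{prop:th_decomp_th3} are verified, Theorem~\ref{thm:mdi_interaction} applied to $\{j\}$ gives immediately $\bigl|\mdi_{\tree^{\star}_k}(X^{(j)})-\V[m_j(X^{(j)})]\bigr|\le\gamma$ for $k$ large, together with the corresponding high-probability bound for $\empmdi_{\tree_{n,k}}(X^{(j)})$.

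\textbf{Supplying the consistency hypothesis.} The one thing Theorem~\ref{thm:additive_models} does \emph{not} assume (unlike Corollary~\ref{cor:sumImportance} or Theorem~\ref{thm:sumImportance_emp2}) is tree consistency, so this must be established under Model~\ref{ass:gaussian_noise}. What is needed is that every theoretical CART tree built on $(\bX^{(\mathcal{J})},m_{\mathcal{J}}(\bX^{(\mathcal{J})}))$ and on $(\bX^{(\mathcal{J}^c)},m_{\mathcal{J}^c}(\bX^{(\mathcal{J}^c)}))$ is consistent — but these are again continuous additive models with independent uniform inputs, of dimensions $1$ and $d-1$. I would prove consistency of theoretical CART for such additive models by induction on the dimension: the base case $d=1$ is one-dimensional CART with a continuous target (see below, or invoke the existing CART-consistency analysis for additive models, \cite{ScBiVe15, klusowski2020sparse}), and the induction step follows from Proposition~\ref{prop:th_decomp_th3} applied to the split $\{1,\dots,d\}=\{1,\dots,d-1\}\cup\{d\}$, since the $(d-1)$-dimensional sub-model is consistent by the induction hypothesis and the one-dimensional one by the base case. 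Applying the resulting statement to each sub-model of the singleton partition $\mathcal{J}=\{j\}$ verifies exactly the hypotheses required by Theorem~\ref{thm:mdi_interaction}. Note also that Lemma~\ref{Lem:decompt_th_3} is implicitly what makes the singleton reduction legitimate, as it guarantees that the splitting behaviour along $X^{(j)}$ only sees $m_j$.

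\textbf{Uniformity over $j$.} Theorem~\ref{thm:mdi_interaction} produces, for each fixed $j$, a threshold $K_j$ for the population bound and a pair $(K_j,N_j)$ with exceptional probability $\rho$ for the empirical bound; applying it with $\rho/d$ in place of $\rho$, setting $K=\max_j K_j$, $N=\max_j N_j$, and taking a union bound over the $d$ coordinates gives both bounds simultaneously for all $j$, for every $k>K$ and $n\ge N$, on an event of probability at least $1-\rho$. This matches the form of the statement, which asks for a single $K$ valid for all $j$.

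\textbf{Main obstacle.} The genuinely delicate point is the base case: one-dimensional theoretical CART must be shown consistent for an \emph{arbitrary} continuous regression function. Because the split is chosen greedily to maximise the variance decrease, the cell containing a fixed point need not shrink to a singleton (for instance where $m_1$ is locally constant), so consistency cannot be obtained from a cell-diameter argument; instead one must argue directly that the within-cell variance of $m_1$ vanishes along the tree, e.g.\ by comparing the impurity decrease of the optimal split with that of the midpoint (or median) split and iterating, or by citing the corresponding lemma in the random forest literature. Once this one-dimensional fact is in hand, the remainder — the reduction to Model~\ref{ass:decomp_m_sum}, the inductive use of Proposition~\ref{prop:th_decomp_th3}, the application of Theorem~\ref{thm:mdi_interaction}, and the union bound — is routine.
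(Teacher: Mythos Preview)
Your proposal is correct and takes essentially the same approach as the paper: apply Theorem~\ref{thm:mdi_interaction} with the singleton partition $\mathcal{J}=\{j\}$, after verifying the consistency hypothesis of Proposition~\ref{prop:th_decomp_th3}. The paper's proof is a single sentence which dispatches the consistency step by citing Lemma~1 of \cite{ScBiVe15} directly (that lemma gives consistency of theoretical CART for additive models with independent uniform inputs in any dimension), whereas you rebuild that fact via an induction on $d$ using Proposition~\ref{prop:th_decomp_th3} plus a one-dimensional base case; both routes land in the same place, and your explicit union bound over $j$ and $\rho/d$ adjustment simply make precise what the paper leaves implicit.
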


Since the MDI computed via random forests is nothing but the average of MDI computed by trees, Theorem~\ref{thm:additive_models} also holds for MDI computed via random forests. To the best of our knowledge, Theorem~\ref{thm:additive_models} is the first result highlighting that empirical MDI computed with CART procedure converge to reasonable values that can be used for variable selection, in the framework of Model~\ref{ass:gaussian_noise}. 

Contrary to Theorem~\ref{thm:mdi_interaction},  Theorem~\ref{thm:additive_models} does not assume the consistency of the theoretical tree. Indeed, in the context of additive models, one can directly take advantage of \cite{ScBiVe15} to prove the consistency of theoretical CART trees. 

Surprisingly, the population version of MDI has the same expression as the population version of MDA (up to factor 2), as stated in \cite{gregorutti2017correlation}. Thus, in the particular context of additive models with independent features, both MDI and MDA target the same quantity, which is the natural way to decompose the total variance. In this context, MDI and MDA can then be employed to rank features and select variables based on the importance values they produce. 

Note that MDI is computed with truncated trees, i.e. trees that contain a large number of observations in each node. This is mandatory to ensure that the variance of the output in the resulting cell is correctly estimated. As mentioned before, considering fully grown trees would result in positively-biased MDI, which can lead to unintended consequences for variable selection. We therefore stress that MDI must not be computed using fully grown trees. 

Theorem~\ref{thm:additive_models} proves that MDI computed with any CART tree are asymptotically identical: the MDI are consistent across trees.  The only interest to use many trees to compute the MDI instead of a single one relies on the variance reduction property of ensemble methods, which allows us to reduce the variance of the MDI estimate when using many trees.

\paragraph{Remark: invariance under monotonous transformations} Tree-based methods are known to be invariant by strictly monotonous transformations of each input. Letting $f_j$ being any monotonous transformation applied to variable $X^{(j)}$, the new regression model writes
\begin{align*}
Y = \sum_{j=1}^d m_j \circ f_j^{-1} (f_j(X^{(j)})) + \varepsilon,
\end{align*}
where $f_j(X^{(j)})$ are the new input variables. Assuming that Theorem~\ref{thm:additive_models} can be applied in the more general setting where $X^{(j)} \sim f_j(\mathcal{U}([0,1]))$, the MDI for this new problem satisfies
\begin{align*}
\lim\limits_{k \to \infty} \mdi_{\tree_k^{\star}}(X^{(j)}) = \mathds{V}[m_j \circ f_j^{-1} (f_j(X^{(j)}))] = \mathds{V}[m_j(X^{(j)})].
\end{align*}
Therefore the asymptotic MDI is invariant by monotonic transformation, assuming that Theorem~\ref{thm:additive_models} holds for any marginal distribution of $\bX$ (with independent components).

\medskip 

The most famous instance of additive regression models is the well-studied linear model \citep[see, e.g.,][]{rencher2008linear, searle2016linear}. 
\begin{model}[Linear model]
	\label{ass:lin_models}
	The regression model writes
	\begin{align*}
	Y = \sum_{j=1}^d \alpha_j X^{(j)} + \varepsilon,
	\end{align*}
	where, for all $j$, $\alpha_j \in \mathds{R}$; $\varepsilon$ is a  Gaussian noise $\mathcal{N}(0,\sigma^2)$, independent of $\bX$; and $ \bX \sim \mathcal{U}([0,1]^d)$. 
\end{model}
As for  Model~\ref{ass:gaussian_noise}, there is only one way to decompose the explained variance for linear models with independent input, as stated by \cite{nathans2012interpreting} and \cite{gromping2015variable} (and the references therein), which is $\alpha_j^2 \mathds{V}[X^{(j)}]$. Theorem~\ref{thm:lin_reg} below proves that MDI converge to these quantities and provides information about the theoretical tree structure in Model~\ref{ass:lin_models}.

\begin{theorem}[Linear model]
	\label{thm:lin_reg}
	Assume that Model~\ref{ass:lin_models} holds. For any cell $A =\prod_{\ell=1}^d [a_{\ell}, b_{\ell}] \subset [0,1]$, the coordinate which maximizes the population CART-split criterion (\ref{eq:theoretical_cart_crit}) satisfies 
	\begin{align*}
		j^{\star} \in \argmax_{\ell \in \{1, \hdots , d\}} \alpha_{\ell}^2 (b_{\ell}-a_{\ell})^2\, 
	\end{align*}
	and the splitting position is the center of the cell; the associated variance reduction is 
	\begin{align*}
		\frac{\alpha_{j^{\star}}^2}{4} \Big[ \frac{b_{j^{\star}}-a_{j^{\star}}}{2}\Big]^2.
	\end{align*}
	 Let $\tree^{\star}$ be any theoretical CART tree and $\tree_n$ be the empirical CART tree. Then, for all $\gamma >0$, there exists $K$ such that, for all $k>K$ and for all $j$,
	 \begin{align*}
	 \Big| \mdi_{\tree_k^{\star}}(X^{(j)}) - \frac{\alpha_j^2}{12} \Big| \leq \gamma.
	 \end{align*}
	Moreover, for all $\gamma>0, \rho \in (0,1]$, there exists $K$ such that, for all $k>K$, for all $n$ large enough, with probability at least $1 - \rho$, for all $j$,
	\begin{align*}
	\Big| \empmdi_{\tree_{n,k}}(X^{(j)}) - \frac{\alpha_j^2}{12} \Big| \leq \gamma.
	\end{align*}
\end{theorem}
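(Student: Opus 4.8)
The plan is to separate the statement into two essentially independent parts: an exact, elementary computation of the theoretical CART split inside an arbitrary cell, and the two MDI limits, which will follow almost directly from Theorem~\ref{thm:additive_models}.

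\emph{The split inside a generic cell.} Fix $A = \prod_{\ell=1}^d [a_\ell, b_\ell]$. Because $\bX \sim \mathcal{U}([0,1]^d)$, conditionally on $\{\bX \in A\}$ the coordinates remain independent with $X^{(\ell)} \sim \mathcal{U}([a_\ell, b_\ell])$, so for any candidate direction $j$ the linear model fits Model~\ref{ass:decomp_m_sum} with $\mathcal{J} = \{j\}$; Lemma~\ref{Lem:decompt_th_3} then reduces $L_A^\star(j,z)$ to the one-dimensional criterion of the toy pair $(X^{(j)}, \alpha_j X^{(j)})$ with $X^{(j)} \sim \mathcal{U}([a_j,b_j])$. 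Writing $p_L = \P[X^{(j)} < z \mid \bX \in A]$, $p_R = 1-p_L$, and $\mu_L, \mu_R$ for the conditional means of $\alpha_j X^{(j)}$ on the two sides, the law of total variance gives $L_A^\star(j,z) = p_L p_R (\mu_L - \mu_R)^2$. A direct computation yields $p_L = (z-a_j)/(b_j-a_j)$, $\mu_L = \alpha_j(a_j+z)/2$, $\mu_R = \alpha_j(z+b_j)/2$, so that $\mu_L - \mu_R = \alpha_j (a_j-b_j)/2$ does not depend on $z$ and
\[
L_A^\star(j,z) = \frac{(z-a_j)(b_j-z)}{(b_j-a_j)^2}\cdot\frac{\alpha_j^2 (b_j-a_j)^2}{4}.
\]
As a function of $z$ this is strictly concave and uniquely maximized at the midpoint $z = (a_j+b_j)/2$, where it equals $\alpha_j^2 (b_j-a_j)^2/16 = \frac{\alpha_j^2}{4}\big[\frac{b_j-a_j}{2}\big]^2$. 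Maximizing over $j$ gives $j^\star \in \argmax_\ell \alpha_\ell^2 (b_\ell-a_\ell)^2$, which proves the first three claims (and shows en passant that every cell of a theoretical tree is a dyadic box, should one want to make the tree structure fully explicit).

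\emph{The MDI limits.} Model~\ref{ass:lin_models} is the special case of Model~\ref{ass:gaussian_noise} with the continuous univariate components $m_j(x) = \alpha_j x$ and $\bX \sim \mathcal{U}([0,1]^d)$, so Theorem~\ref{thm:additive_models} applies as is: for every $\gamma>0$ there is $K$ with $|\mdi_{\tree_k^\star}(X^{(j)}) - \V[m_j(X^{(j)})]| \le \gamma$ for all $k>K$ and all $j$, and, for every $\gamma>0$ and $\rho\in(0,1]$, for $k$ and then $n$ large enough, $|\empmdi_{\tree_{n,k}}(X^{(j)}) - \V[m_j(X^{(j)})]| \le \gamma$ for all $j$ with probability at least $1-\rho$. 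It then suffices to evaluate $\V[m_j(X^{(j)})] = \alpha_j^2\,\V[X^{(j)}] = \alpha_j^2 \int_0^1 (x - 1/2)^2\,\mathrm{d}x = \alpha_j^2/12$ to obtain the announced bounds on $|\mdi_{\tree_k^\star}(X^{(j)}) - \alpha_j^2/12|$ and its empirical analogue.

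\emph{The main obstacle.} With Theorem~\ref{thm:additive_models} in hand, the genuinely new content is the closed-form split of the first part, which is elementary; the only steps requiring some care are (i) invoking Lemma~\ref{Lem:decompt_th_3} cleanly, i.e.\ checking that conditioning on a cell preserves independence and uniformity of the marginals so that the linear-uniform model really matches Model~\ref{ass:decomp_m_sum} for each singleton coordinate, and (ii) the uniqueness of the midpoint maximizer, so that ``the splitting position is the center of the cell'' is unambiguous — and, for the empirical tree, that the tie-breaking rule then places the cut between the two central observations, consistently with the population split. All the heavier analysis — uniform consistency of empirical CART trees under the additive model and the control of the empirical partition — is inherited from Theorem~\ref{thm:additive_models} and need not be redone.
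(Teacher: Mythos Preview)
Your proof is correct and follows essentially the same strategy as the paper: an explicit computation of the population criterion on a generic cell, followed by an appeal to the additive-model MDI machinery. The only cosmetic differences are that the paper derives the closed form via Technical Lemma~\ref{thm:mdi_splimpification_critere} (the squared-conditional-means identity) rather than your Lemma~\ref{Lem:decompt_th_3} plus the between-group variance formula $p_Lp_R(\mu_L-\mu_R)^2$, and that it cites Theorem~\ref{thm:mdi_interaction} after checking consistency directly from the split rule, whereas you shortcut through Theorem~\ref{thm:additive_models}---which the paper itself notes is sufficient.
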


Theorem~\ref{thm:lin_reg} establishes that, for linear models with independent inputs, the MDI computed with CART trees whose depth is limited is exactly the quantity of interest, which depends only on the magnitude of $\alpha_j^2$, since all variables have the same variance.  
Theorem~\ref{thm:lin_reg} is a direct consequence of Theorem~\ref{thm:additive_models}. However, for linear models, one can compute explicitly the best splitting location and the associated variance decreasing, which is stated in Theorem~\ref{thm:lin_reg}. Being able to compute the variance decreasing for each split allows us to compute the MDI directly, without using Theorem~\ref{thm:additive_models}. Note that the splitting location and variance reduction has been first stated by \cite{Bi12} and then by \cite{Is13}.

\begin{remark}
All our results focus on the asymptotic behaviour of the MDI computed with CART. A natural next step in our analysis would be to derive rates of convergence for MDI, in order to quantify the accuracy of a variable ranking based on MDI. Unfortunately, we are limited by the (asymptotic) tree consistency assumption (see assumption in Corollary~\ref{cor:sumImportance}). If we had access to the rate of consistency of a tree, we could leverage on that to establish upper bounds on the rate of consistency of MDI. Although interesting, this line of work seems difficult. However, some generic arguments based on partition shapes have been used to establish rate of consistency for groups of MDI \citep[MDI of irrelevant variables, see Theorem~1 in][]{li2019debiased}. Interesting finite-sample lower bounds on MDI can also be found in \citet{klusowski2019analyzing}.
\end{remark}

In this section, we studied additive (linear) models and proved that, for these models with independent inputs, MDI computed with truncated CART is consistent. This result is based on CART consistency proved in \citet{ScBiVe15} for additive models with independent inputs. One can naturally wonder what happens if the additivity assumption (or the independence assumption) is removed. In these cases, the general approach, developed in this section, is unfortunately not valid. The next two sections focus on two specific settings, and show that, when one of these two assumptions is removed, the MDI is ill-defined.

\section{A model with interactions}
\label{sec:interactions}

In the absence of interaction and dependence between inputs, the MDI is a measure of variable importance which is independent of the set of variables included in the model, as stated in Theorem~\ref{thm:additive_models}. However, this is not true as soon as we consider classes of models that present interactions or dependence. In this section, we study the influence of interactions via the following model. 

\begin{model}[Multiplicative model]
	\label{ass:mult_models}
	Let $\alpha \in \mathds{R}$. The regression model writes
	\begin{align*}
	Y = 2^d \alpha \prod_{j=1}^d X^{(j)}+ \varepsilon.
	\end{align*}
	where  $\alpha \in \mathds{R}$; $\varepsilon$ is a  Gaussian noise $\mathcal{N}(0,\sigma^2)$, independent of $\bX$; and $ \bX \sim \mathcal{U}([0,1]^d)$. 
\end{model}

\begin{theorem}[Multiplicative model]
	\label{thm:dim2_product_mdibis}
	Assume that Model~\ref{ass:mult_models} holds. For any cell $A =\prod_{\ell=1}^d [a_{\ell}, b_{\ell}] \subset [0,1]$, the coordinate which maximizes the population CART-split criterion (\ref{eq:theoretical_cart_crit}) satisfies 
	\begin{align*}
	j^{\star} \in \argmax_{\ell \in \{1, \hdots , d\}}  \frac{b_{\ell} - a_{\ell} }{a_{\ell} + b_{\ell}},
	\end{align*} 
	and the splitting position is the center of the cell; the associated variance reduction is 
	\begin{align*}
	\frac{\alpha^2}{4} (b_{j^{\star}} - a_{j^{\star}})^2 \prod_{\ell \neq j^{\star}} (a_{\ell} + b_{\ell})^2
	\end{align*}
	Let $\tree^{\star}$ be any theoretical CART tree and $\tree_n$ be the empirical CART tree. Then, for all $\gamma >0$, there exists $K$ such that, for all $k >K$,
	\begin{align*}
	\Big| \sum_{j=1}^d  \mdi_{\tree_k^{\star}}(X^{(j)}) - \alpha^2 \Big(\Big(\frac{4}{3}\Big)^d - 1 \Big) \Big| \leq \gamma.
	\end{align*}
	Moreover, for all $\gamma>0, \rho \in (0,1]$, there exists $K$ such that, for all $k>K$, for all $n$ large enough, with probability at least $1 - \rho$, 
	\begin{align*}
	\Big| \sum_{j=1}^d  \empmdi_{\tree_{n,k}}(X^{(j)}) - \alpha^2 \Big(\Big(\frac{4}{3}\Big)^d - 1 \Big)  \Big| \leq \gamma.
	\end{align*}
\end{theorem}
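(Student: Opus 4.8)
The proof has three ingredients: an exact computation of the population CART-split criterion inside an arbitrary cell (which yields the first displayed claim and the value of $\mathds{V}[m(\bX)]$); a proof that every theoretical CART tree is consistent under Model~\ref{ass:mult_models}; and an application of Corollary~\ref{cor:sumImportance} and Theorem~\ref{thm:sumImportance_emp2}.

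First I would fix a cell $A=\prod_{\ell=1}^d[a_\ell,b_\ell]$. Conditionally on $\{\bX\in A\}$ the coordinates $X^{(\ell)}$ are independent and uniform on $[a_\ell,b_\ell]$; moreover $\varepsilon$ is independent with zero conditional mean, so the $\sigma^2$ contributions cancel in $L_A^{\star}(j,z)$ and one may replace $Y$ by $m(\bX)=2^d\alpha\prod_\ell X^{(\ell)}$. By the law of total variance, $L_A^{\star}(j,z)=p_L p_R\big(\mathds{E}[m(\bX)\mid A_L]-\mathds{E}[m(\bX)\mid A_R]\big)^2$, where $p_L,p_R$ are the conditional masses of the two children of $A$ along $(j,z)$. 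Using independence and $\mathds{E}[X^{(\ell)}\mid X^{(\ell)}\in[a_\ell,b_\ell]]=(a_\ell+b_\ell)/2$, the product over $\ell\neq j$ is common to both children, so the bracketed quantity equals $2^d\alpha\,\frac{a_j-b_j}{2}\prod_{\ell\neq j}\frac{a_\ell+b_\ell}{2}$ and does not depend on $z$; hence $L_A^{\star}(j,z)$ is maximized at $z=(a_j+b_j)/2$, where $p_L p_R=1/4$, with value $\frac{\alpha^2}{4}(b_j-a_j)^2\prod_{\ell\neq j}(a_\ell+b_\ell)^2$ after simplification. Dividing by the $j$-independent factor $\prod_{\ell=1}^d(a_\ell+b_\ell)^2$ shows that the optimal coordinate maximizes $(b_\ell-a_\ell)/(a_\ell+b_\ell)$. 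Taking $A=[0,1]^d$ in the same computation gives $\mathds{V}[m(\bX)]=2^{2d}\alpha^2\big((1/3)^d-(1/2)^{2d}\big)=\alpha^2\big((4/3)^d-1\big)$.

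Next I would prove that along every infinite branch of any theoretical tree the cells $A_k=\prod_\ell[a_\ell^{(k)},b_\ell^{(k)}]$ satisfy $\Delta(m,A_k)\to 0$ as $k\to\infty$. The intervals $[a_\ell^{(k)},b_\ell^{(k)}]$ are nested, hence convergent; let $S$ be the set of coordinates that are split only finitely often along the branch, so that for $\ell\in S$ the width $b_\ell^{(k)}-a_\ell^{(k)}$ is eventually a fixed positive number, while for $\ell\notin S$ it tends to $0$. If $S=\emptyset$, all widths vanish and the bound $|m(\bx)-m(\by)|\le 2^d|\alpha|\sum_\ell|x^{(\ell)}-y^{(\ell)}|$ gives $\Delta(m,A_k)\le (2^d|\alpha|)^2\big(\sum_\ell(b_\ell^{(k)}-a_\ell^{(k)})\big)^2\to 0$. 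If $S\neq\emptyset$, then since there are infinitely many splits, some $\ell_1\notin S$ is split infinitely often and its interval shrinks to a point $c_{\ell_1}$; if $c_{\ell_1}>0$ then the ratio $(b_{\ell_1}^{(k)}-a_{\ell_1}^{(k)})/(a_{\ell_1}^{(k)}+b_{\ell_1}^{(k)})$ tends to $0$, which is impossible because at each of the infinitely many steps past the last split of a coordinate $\ell_0\in S$ the coordinate $\ell_1$ is selected and hence has ratio at least the fixed positive ratio of $\ell_0$; therefore $c_{\ell_1}=0$, whence $\sup_{\bx\in A_k}|m(\bx)|\le 2^d|\alpha|\,b_{\ell_1}^{(k)}\to 0$ and $\Delta(m,A_k)\le\big(\sup_{A_k}|m|\big)^2\to 0$. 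Since for every $\bX'$ the cells $A_{\tree_k^{\star}}(\bX')$ form such a branch, we obtain $\Delta(m,A_{\tree_k^{\star}}(\bX'))\to 0$ for all $\bX'$.

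Finally, $\|m\|_\infty=2^d|\alpha|<\infty$, so Corollary~\ref{cor:sumImportance} applied with the consistency just established gives $\big|\sum_{j=1}^d\mdi_{\tree_k^{\star}}(X^{(j)})-\mathds{V}[m(\bX)]\big|\le\gamma$ for $k$ large; and Model~\ref{ass:mult_models} is a special case of Model~\ref{ass:uniform_input}, so Theorem~\ref{thm:sumImportance_emp2}, again with that consistency, gives the corresponding high-probability statement for $\empmdi_{\tree_{n,k}}$. Substituting $\mathds{V}[m(\bX)]=\alpha^2((4/3)^d-1)$ yields both remaining displays. I expect the consistency step to be the main obstacle: the criterion favours cells touching a hyperplane $\{x^{(\ell)}=0\}$, where the ratio equals $1$, so cells need not shrink in every direction; one must use the multiplicative structure (a coordinate pinned near $0$ forces $m$ itself near $0$) together with the fact that a coordinate's ratio cannot remain above a positive threshold indefinitely while it is being split, in order to rule out the pathological branches.
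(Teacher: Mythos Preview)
Your proof is correct and follows the same three-step architecture as the paper's (criterion computation, consistency of theoretical trees, appeal to Corollary~\ref{cor:sumImportance} and Theorem~\ref{thm:sumImportance_emp2}), but two of the steps are executed differently.

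For the criterion, the paper invokes Technical Lemma~\ref{thm:mdi_splimpification_critere}, expands $L^{\star}_A(\ell,s)$ into a quadratic polynomial in $s$, maximises it, and simplifies. Your route via the identity $L_A^{\star}(j,z)=p_Lp_R(\mu_L-\mu_R)^2$ together with the observation that $\mu_L-\mu_R=2^d\alpha\,\tfrac{a_j-b_j}{2}\prod_{\ell\neq j}\tfrac{a_\ell+b_\ell}{2}$ does not depend on $z$ is cleaner: it makes the optimality of the midpoint immediate (one only maximises $p_Lp_R$) and bypasses the polynomial algebra entirely. For consistency, the paper restricts to $\bx\in(0,1]^d$, a set of full measure, and uses the two-sided bound $\tfrac{\alpha^2}{4}(b_\ell-a_\ell)^2\prod_{j\neq\ell}x_j^2\le L^{\star}_{A}(\ell,s^{\star})\le \tfrac{\alpha^2}{4}(b_\ell-a_\ell)^2$ (the lower bound coming from $a_j+b_j\ge x_j$) to force every side length to zero; the boundary hyperplanes are simply discarded as null. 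You instead treat every branch with a case split: either all widths vanish, or some coordinate $\ell_1\notin S$ shrinks to a point $c_{\ell_1}$, which the argmax rule forces to be $0$, and then the multiplicative structure gives $\sup_{A_k}|m|\le 2^d|\alpha|\,b_{\ell_1}^{(k)}\to 0$. The paper's argument is a touch more economical, while yours yields the slightly stronger conclusion that $\Delta(m,A_{\tree_k^{\star}}(\bx))\to 0$ for \emph{every} $\bx\in[0,1]^d$, not just almost every one.
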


Theorem~\ref{thm:dim2_product_mdibis} gives the explicit splitting position for a model with interactions. Deriving splitting positions allows us to prove that the theoretical tree is consistent which, in turns, proves that the sum of MDI converges to the explained variance $\mathds{V}[m(\bX)]$, according to Theorem~\ref{thm:sumImportance_emp2}. 

We are also interested in obtaining the exact MDI expression for each variable, as established for additive models. However, Theorem~\ref{thm:mdi_interaction} no longer applies since the regression model cannot be decomposed into independent additive components. It turns out to be impossible to derive explicit expression for each MDI in this model. 
To see this, note that there exist many theoretical trees in Model~\ref{ass:mult_models}. Two of them are displayed in Figure~\ref{figure2}. They result in the same partition but the first split can be made either along $X^{(1)}$ (Figure~\ref{figure2}, left)  or $X^{(2)}$ (Figure~\ref{figure2}, right).
Surprisingly, the variable importance computed with these trees is larger for the variable that is split in the second step. This exemplifies that splits occurring in the first levels of trees do not lead to the largest decrease in variance, which in turn, shows that variable importances based on the  frequency of splitting along a variable in the first level(s) of a tree are inaccurate. Nonetheless, this holds only when interactions are present: in the case of the additive models defined in Section~\ref{sec:additive_models}, the decrease in variance is stronger in the first tree levels. 
A direct consequence of this fact is that  two different theoretical tree can output two different variable importance, as shown by Lemma~\ref{lem:diff_imp} below.

\begin{figure}[h!!]
	\begin{minipage}[c]{.5\linewidth}
		\begin{center}
		\includegraphics[scale=0.5]{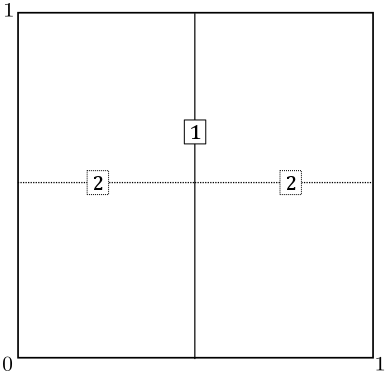}
		\end{center}
	\end{minipage} 
	\hspace{-0.5cm}
	\begin{minipage}[c]{.5\linewidth}
		\begin{center}
		\includegraphics[scale=0.5]{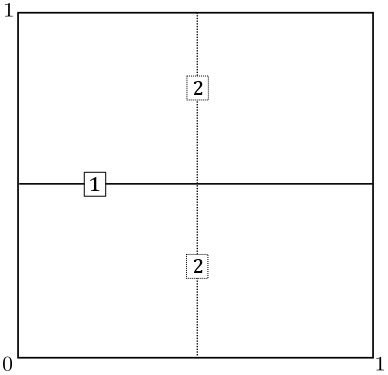}
		\end{center}
	\end{minipage}
	\caption{Two theoretical tree partitions of level $k=2$: the first split is performed along $X^{(1)}$ (resp. $X^{(2)}$) for the tree on the left (resp. on the right).}
	\label{figure2}
\end{figure}

\begin{lemme} \label{lem:diff_imp}
Assume that Model~\ref{ass:mult_models} holds. Then, there exists two  theoretical trees $\mathcal{T}_{1}$ and $\mathcal{T}_{2}$ such that
\begin{align*}
\lim\limits_{k \to \infty} \Big( \mdi_{\mathcal{T}_{2,k}}(X^{(1)}) - \mdi_{\mathcal{T}_{1,k}}(X^{(1)}) \big) = \alpha^2/16.
\end{align*} 
\end{lemme}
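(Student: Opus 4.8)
The plan is to make the two trees of Figure~\ref{figure2} fully explicit and to reduce the statement to a finite computation at level~$2$.  I take $\mathcal{T}_1$ to be a theoretical CART tree whose root $[0,1]^d$ is cut along $X^{(1)}$ at $1/2$ and whose two level-$1$ cells are each cut along $X^{(2)}$ at $1/2$, and $\mathcal{T}_2$ to be one whose root is cut along $X^{(2)}$ at $1/2$ and whose two level-$1$ cells are each cut along $X^{(1)}$ at $1/2$; below level~$2$ both trees induce the very same four cells $\big([0,1/2]\text{ or }[1/2,1]\big)\times\big([0,1/2]\text{ or }[1/2,1]\big)\times[0,1]^{d-2}$, and I extend $\mathcal{T}_1$ and $\mathcal{T}_2$ below each of these four cells by one and the same recursion (resolving ties the same way), so that $\mathcal{T}_1$ and $\mathcal{T}_2$ coincide at every node of depth $>2$.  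The point that needs care is to check that both level-$2$ skeletons are legitimate theoretical CART trees, i.e.\ that every cut prescribed above is a global maximizer of $L^{\star}_A$ and is located at the center: this follows from Theorem~\ref{thm:dim2_product_mdibis}, since at $[0,1]^d$ every coordinate attains the maximal value $1$ of $(b_\ell-a_\ell)/(a_\ell+b_\ell)$, and in each level-$1$ cell the coordinate used at level~$2$ still attains that maximal value $1$.  (These are exactly the two trees of Figure~\ref{figure2}.)

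Next I would observe that, for every $k\ge 2$, the internal nodes of $\mathcal{T}_{1,k}$ and of $\mathcal{T}_{2,k}$ of depth $\ge 2$, together with the splits performed there, are identical; and since $p^{\star}_A$ and $L^{\star}_A(j_A^{\star},z_A^{\star})$ depend only on the cell and the chosen split, the contribution of those depth-$\ge 2$ nodes to $\mdi_{\mathcal{T}_{i,k}}(X^{(1)})$ is the same quantity for $i=1$ and $i=2$.  Hence, for all $k\ge 2$,
\[
\mdi_{\mathcal{T}_{2,k}}(X^{(1)}) - \mdi_{\mathcal{T}_{1,k}}(X^{(1)}) \;=\; a_2 - a_1,
\]
where $a_i$ is the part of $\mdi(X^{(1)})$ contributed by the root and the two level-$1$ nodes of $\mathcal{T}_i$ alone; in particular the left-hand side is independent of $k$, so the limit exists and equals $a_2-a_1$.

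It then only remains to evaluate $a_1$ and $a_2$ with the variance-reduction formula of Theorem~\ref{thm:dim2_product_mdibis}, namely that a center cut along coordinate $j^{\star}$ of a cell $\prod_\ell[a_\ell,b_\ell]$ decreases the variance by $\tfrac{\alpha^2}{4}(b_{j^{\star}}-a_{j^{\star}})^2\prod_{\ell\neq j^{\star}}(a_\ell+b_\ell)^2$ (when $d>2$ the coordinates $\ell\ge 3$ only contribute a factor $(0+1)^2=1$).  In $\mathcal{T}_1$ only the root is split along $X^{(1)}$, with weight $p^{\star}=1$ and reduction $\tfrac{\alpha^2}{4}(1-0)^2=\tfrac{\alpha^2}{4}$, so $a_1=\alpha^2/4$.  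In $\mathcal{T}_2$ the root is split along $X^{(2)}$, contributing nothing to $\mdi(X^{(1)})$, while the level-$1$ cells $[0,1]\times[0,1/2]\times[0,1]^{d-2}$ and $[0,1]\times[1/2,1]\times[0,1]^{d-2}$ are each split along $X^{(1)}$, each with weight $p^{\star}=1/2$ and with reductions $\tfrac{\alpha^2}{4}(1-0)^2(0+1/2)^2=\tfrac{\alpha^2}{16}$ and $\tfrac{\alpha^2}{4}(1-0)^2(1/2+1)^2=\tfrac{9\alpha^2}{16}$, so $a_2=\tfrac12\cdot\tfrac{\alpha^2}{16}+\tfrac12\cdot\tfrac{9\alpha^2}{16}=\tfrac{5\alpha^2}{16}$.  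Therefore $a_2-a_1=\tfrac{5\alpha^2}{16}-\tfrac{\alpha^2}{4}=\tfrac{\alpha^2}{16}$, as claimed.  The main obstacle is really just the bookkeeping around ties in the first two levels flagged above; beyond that the argument is the elementary arithmetic of this last paragraph.
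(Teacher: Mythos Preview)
Your proposal is correct and follows essentially the same route as the paper: build the two depth-$2$ trees of Figure~\ref{figure2}, use the variance-reduction formula of Theorem~\ref{thm:dim2_product_mdibis} to get $a_1=\alpha^2/4$ and $a_2=5\alpha^2/16$, and extend both trees identically below level~$2$ so the difference is constant in $k$. You are a bit more explicit than the paper in verifying that each prescribed cut actually maximizes $L^{\star}_A$ (i.e.\ that both skeletons are legitimate theoretical CART trees) and in handling general $d$ via the harmless $(0+1)^2$ factors, but the argument is the same.
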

According to Lemma~\ref{lem:diff_imp}, there exist many different theoretical trees for Model~\ref{ass:mult_models} and the MDI may vary when computed with different theoretical trees. This is a major difference with additive models for which each theoretical tree asymptotically output the same MDI. Since MDI are usually used to rank and select variables, the fact that each tree can output a different MDI, and therefore a different variable ranking, is a major drawback for employing this measure in presence of interactions. One way to circumvent this issue could be to compute the MDI via a random forest: the randomization of splitting variables yields different trees and the aggregation of MDI across trees provides a mean effect of the variable importance.  For example, in Model~\ref{ass:mult_models}, one can hope to obtain importance measure that are equal since Model~\ref{ass:mult_models} is symmetric in all variables. This is impossible with only one tree but is a reachable goal by computing the mean of MDI across many trees as done in random forests. More generally, for simple regression problems, a single tree may be sufficient to predict accurately but many shallow trees are needed to obtain accurate MDI values.

\section{A model with dependent inputs}
\label{sec:corr}

Our previous results relies on the (often unrealistic) assumption that input variables are uniformly distributed over $[0,1]^d$, and thus independent. To gain insights on the impact of input dependence on MDI, we study a very specific model in which input features are not independent. We will prove that even in this very simple model, MDI has some severe drawbacks that need to be addressed carefully. 
Accordingly, we consider the simple case where the input vector $\bX = (X^{(1)}, X^{(2)})$ is of dimension two and has a distribution, parametrized by $\beta \in \mathds{N}$, defined as 
\begin{align}
\bX \sim \mathcal{U}\left( \cup_{j=0}^{2^{\beta}-1} \Big[ \frac{j}{2^{\beta}}, \frac{j+1}{2^{\beta}}\Big)^2 \right) = \unifbeta.
\label{ass:distribution_correlation}
\end{align}
The distribution of $\bX$ is uniform on $2^{\beta}$ squares on the diagonal. Examples of such distribution are displayed in Figure~\ref{figure3}.  
\begin{figure}[h!]
	\begin{minipage}[c]{.5\linewidth}
		\begin{center}
		\includegraphics[scale=0.5]{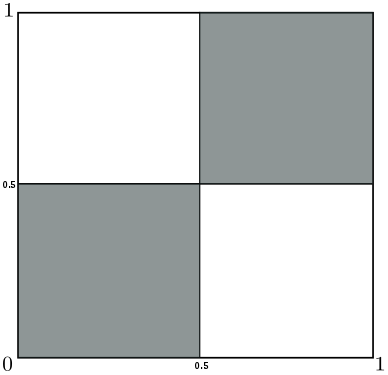}
		\end{center}
		
	\end{minipage} 
	\hspace{-0.5cm}
	\begin{minipage}[c]{.5\linewidth}
		\begin{center}
		\includegraphics[scale=0.5]{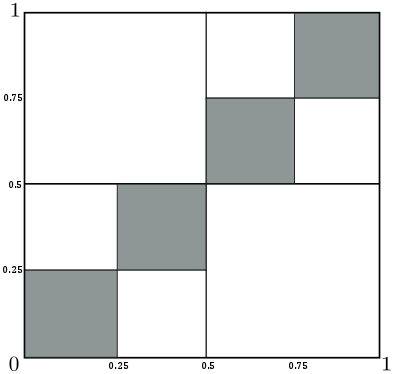}
		\end{center}	
	\end{minipage}
	\caption{Illustration of distributions for $\bX$, with parameter $\beta=1$ (left) and $\beta=2$ (right).}
	\label{figure3}
\end{figure}
For such distribution, the correlation between $X^{(1)}$ and $X^{(2)}$ is given by Lemma~\ref{lemma:correlationX1X2}.
\begin{lemme} \label{lemma:correlationX1X2}
	Let $\bX = (X^{(1)}, X^{(2)}) \sim \unifbeta$ as defined in  (\ref{ass:distribution_correlation}). Then 
	\begin{align*}
	\corr(X^{(1)}, X^{(2)}) =  1 - \frac{1}{4^{\beta}}.
	\end{align*}
\end{lemme}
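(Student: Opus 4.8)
Write $N = 2^\beta$, so that $\bX$ is uniform on the union of the $N$ squares $S_j = [j/N,(j+1)/N)^2$, $j = 0,\dots,N-1$. The first step is to identify the one-dimensional marginals: since $\bX$ lands in $S_j$ with probability $1/N$ and, conditionally on $\{\bX \in S_j\}$, $X^{(1)}$ is uniform on $[j/N,(j+1)/N)$, the marginal law of $X^{(1)}$ is $\mathcal{U}([0,1])$, and likewise for $X^{(2)}$. Hence $\E[X^{(1)}] = \E[X^{(2)}] = 1/2$ and $\V[X^{(1)}] = \V[X^{(2)}] = 1/12$, so the denominator $\sqrt{\V[X^{(1)}]\,\V[X^{(2)}]}$ in $\corr(X^{(1)},X^{(2)})$ equals $1/12$, and it remains only to compute the covariance.

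Next I would compute $\E[X^{(1)} X^{(2)}]$ by conditioning on which square $\bX$ falls into. The key structural fact is that, conditionally on $\{\bX \in S_j\}$, the two coordinates are independent (each uniform on the same interval), so
\begin{align*}
\E\big[X^{(1)} X^{(2)} \,\big|\, \bX \in S_j\big] = \Big(\E\big[X^{(1)} \,\big|\, \bX \in S_j\big]\Big)^2 = \Big(\frac{2j+1}{2N}\Big)^2,
\end{align*}
the midpoint of $[j/N,(j+1)/N)$ squared. Averaging over $j$ with weights $1/N$ gives
\begin{align*}
\E\big[X^{(1)} X^{(2)}\big] = \frac{1}{4N^3}\sum_{j=0}^{N-1}(2j+1)^2 = \frac{1}{4N^3}\cdot\frac{N(4N^2-1)}{3} = \frac{4N^2-1}{12N^2},
\end{align*}
using the elementary identity $\sum_{j=0}^{N-1}(2j+1)^2 = N(4N^2-1)/3$ (sum of squares of the first $N$ odd integers).

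Finally I would assemble the pieces: the covariance is
\begin{align*}
\E\big[X^{(1)} X^{(2)}\big] - \E\big[X^{(1)}\big]\E\big[X^{(2)}\big] = \frac{4N^2-1}{12N^2} - \frac14 = \frac{N^2-1}{12N^2},
\end{align*}
so that $\corr(X^{(1)},X^{(2)}) = \dfrac{(N^2-1)/(12N^2)}{1/12} = 1 - \dfrac{1}{N^2} = 1 - \dfrac{1}{4^\beta}$, as claimed. There is no real obstacle here: the only thing to be careful about is the conditional-independence step (which makes the cross moment factor into a square) and the bookkeeping in the arithmetic series; everything else is routine.
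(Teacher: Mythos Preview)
Your proof is correct and follows essentially the same route as the paper: both condition on the square containing $\bX$, use that the coordinates are conditionally independent and uniform to compute $\E[X^{(1)}X^{(2)}\mid \bX\in S_j]$ as the square of the interval midpoint, sum via the identity for $\sum (2j+1)^2$, and divide by $1/12$. Your substitution $N=2^\beta$ and the explicit remark on conditional independence make the bookkeeping slightly cleaner, but the argument is the same.
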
 
For $\beta=0$, the distribution of $\bX$ is uniform over $[0,1]^2$ and accordingly, the correlation is null between the two components of $\bX$, as stated in Lemma~\ref{lemma:correlationX1X2}. When $\beta$ (i.e. the number of squares) increases, that is the size of each square decreases, the distribution concentrates on the line $X^{(1)} = X^{(2)}$. Therefore, when $\beta$ tends to infinity, the correlation should tend to $1$, which is proved in  Lemma~\ref{lemma:correlationX1X2}.

The distribution defined in (\ref{ass:distribution_correlation}) is rather unusual, and one can wonder why not considering Gaussian distributions instead. This choice is related to the thresholding nature of decision trees. Since we want to compute explicitly the splitting criterion along each coordinate, we must have a closed expression for the truncated marginals (restriction to any rectangle of $[0,1]^2$), which is not possible with Gaussian distribution. The distribution defined in (\ref{ass:distribution_correlation}) allows us to compute easily both truncated marginals and the joint distribution.

\begin{model}
	\label{ass:last_theorem_corr}
	Let $ \beta \in \mathds{N}$. Assume that 
	$$
	Y = X^{(1)} + X^{(2)} + \alpha X^{(3)} + \varepsilon,
	$$
	where $(X^{(1)}, X^{(2)}) \sim \unifbeta$, $X^{(3)} \sim \mathcal{U}([0,1])$ is independent of $(X^{(1)}, X^{(2)})$, and $\varepsilon$ is an independent noise distributed as $\mathcal{N}(0, \sigma^2)$.
\end{model}

In the framework of Model~\ref{ass:last_theorem_corr}, Proposition~\ref{prop:cor_vs_uncor_input} below states that the CART-split criterion has an explicit expression and highlights that splits along positively correlated variables ($X^{(1)}$ and $X^{(2)}$) are more likely to occur compared to splits along independent ones ($X^{(3)}$). Even if the model is very simple, it is the first theoretical proof that CART splitting procedure tends to favor positively correlated variables. Note however that considering negatively correlated variables will result in an opposite effect, i.e. a tendency to favour independent variables compared to negatively correlated ones.

\begin{proposition}
	\label{prop:cor_vs_uncor_input}
	Assume that Model~\ref{ass:last_theorem_corr} holds. Then, the following statements hold: 
	\begin{itemize}
		\item[$(i)$] The splitting criterion has an explicit expression. For $\beta = 0, \hdots, 5$, each split is performed at the center of the support of the chosen variable. 
		
		\item[$(ii)$] For $\beta = 0, \hdots , 5$, the split in the root node $[0,1]^3$ is performed along $X^{(1)}$ or $X^{(2)}$ if, and only if, $|\alpha| \leq 2$. In that case, the variance reduction is equal to $1/4$.
		
	\end{itemize}
\end{proposition}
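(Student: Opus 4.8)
The whole argument rests on rewriting the split criterion as a between-group variance. For a cell $A$, a cut $(j,z)$, its two children $L=\{\bx\in A:\bx^{(j)}<z\}$ and $R=\{\bx\in A:\bx^{(j)}\ge z\}$, and $p=\P[\bX\in L\mid \bX\in A]$, the law of total variance gives the elementary identity
\begin{align*}
L_A^\star(j,z) \;=\; p(1-p)\,\big(\E[Y\mid \bX\in L]-\E[Y\mid \bX\in R]\big)^2 ,
\end{align*}
so everything reduces to computing two conditional expectations of $Y$ in each cell. Since $Y=(X^{(1)}+X^{(2)})+\alpha X^{(3)}+\varepsilon$ with $(X^{(1)},X^{(2)})$ independent of $(X^{(3)},\varepsilon)$, the additive pieces that are independent of the splitting coordinate contribute the same mean to $L$ and to $R$ and cancel in the difference above (this is the mechanism behind Lemma~\ref{Lem:decompt_th_3}). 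Hence a split along $X^{(3)}$ only ``sees'' $\alpha X^{(3)}$, while a split along $X^{(1)}$ or $X^{(2)}$ only ``sees'' $X^{(1)}+X^{(2)}$ and does not involve $\alpha$.

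For the $X^{(3)}$-direction, $X^{(3)}$ being uniform and independent, $L_A^\star(3,z)$ is exactly the one-dimensional linear-model criterion of Theorem~\ref{thm:lin_reg}: on $[a_3,b_3]$ it is maximized at the center $(a_3+b_3)/2$ with value $\tfrac{\alpha^2}{4}\big(\tfrac{b_3-a_3}{2}\big)^2$, i.e.\ $\alpha^2/16$ in the root cell. For the $X^{(1)}$- (and, symmetrically, $X^{(2)}$-) direction I use the block description of $\unifbeta$: a block index $J$ is uniform on $\{0,\dots,2^\beta-1\}$ and, given $J$, $X^{(1)},X^{(2)}$ are i.i.d.\ uniform on $[J2^{-\beta},(J+1)2^{-\beta}]$. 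Restricted to any axis-parallel cell $A$ this law is again a mixture over a sub-family of (rescaled) diagonal squares, so the truncated marginals stay piecewise linear and the needed quantities are elementary: $\P[X^{(1)}<z\mid A]$ is the restricted uniform probability, $\E[X^{(1)}\mid X^{(1)}<z,A]$ is the restricted uniform mean, and $\E[X^{(2)}\mid X^{(1)}<z,A]$ is constant over the block containing $z$ and is an explicit weighted average of block centers over the blocks strictly to the left. Substituting into the identity yields a closed rational expression for $L_A^\star(1,z)$ — this is the ``explicit expression'' claim of $(i)$ — and in the root cell, writing $z=(j_0+t)2^{-\beta}$ with $j_0\in\{0,\dots,N-1\}$, $t\in[0,1)$, $N=2^\beta$, $a=j_0+t$, it collapses to
\begin{align*}
L_{[0,1]^3}^\star(1,z) \;=\; \frac{\big(t(1-t)-2a(N-a)\big)^2}{4N^2\,a(N-a)} .
\end{align*}

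It remains to locate the maximizer and compare directions. Maximizing $L_A^\star(1,\cdot)$ over the $X^{(1)}$-range of $A$ amounts to showing the above rational function (and its cell-restricted analogues) is maximized at the center $z_0$ of the support; clearing denominators turns $L_A^\star(1,z)\le L_A^\star(1,z_0)$ into a polynomial inequality in the normalized cut position, piecewise across the blocks met by $A$, and for $\beta\le 5$ there are only finitely many such configurations, each a low-degree polynomial to be checked non-negative on a bounded interval. At the center $z=1/2$ of the root cell one gets $t=0$ and value $1/4$; at an interior block boundary $z=j_0/N$ one gets $j_0(N-j_0)/N^2\le 1/4$. Thus at the root the best split along $X^{(1)}$ or $X^{(2)}$ has value $1/4$ and the best along $X^{(3)}$ has value $\alpha^2/16$, so the root split is along $X^{(1)}$ or $X^{(2)}$ if and only if $1/4\ge\alpha^2/16$, i.e.\ $|\alpha|\le 2$, with variance reduction $1/4$ — this is $(ii)$. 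For $(i)$ one then induces on the tree level: every split produced above lies at the center of a support, its children are again axis-parallel cells whose restricted law of $(X^{(1)},X^{(2)})$ is of the same mixture-of-diagonal-squares type, so the computation of the previous paragraph and the maximization apply verbatim one level deeper.

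The main obstacle is the maximization step: because of the correlation between $X^{(1)}$ and $X^{(2)}$ the criterion along these coordinates is genuinely different from the linear-model criterion, and proving that its maximizer is the center of the support is not an algebraic identity but a family of polynomial inequalities — it is exactly here that the restriction $\beta\le 5$ is used, to keep the case analysis finite and the polynomials of manageable degree. A secondary, bookkeeping-type difficulty is to verify that splitting at centers of supports keeps every cell aligned with the diagonal-square mosaic (so the explicit formulas stay valid at all depths), including the cells whose $X^{(1)}$-range lies inside a single block, where $X^{(1)}$ and $X^{(2)}$ decouple and the behaviour reverts to the uncorrelated (linear) case.
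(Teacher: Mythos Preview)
Your plan is sound and lines up with the paper's proof in structure: derive an explicit expression for $L^\star_A(1,\cdot)$ from conditional means, show it is maximized at the center of the support, and compare the resulting value with $\alpha^2/16$ from the $X^{(3)}$-direction. Your use of the between-group identity $L_A^\star(j,z)=p(1-p)(\mu_L-\mu_R)^2$ is equivalent to (and slightly more compact than) the paper's Technical Lemma~\ref{thm:mdi_splimpification_critere}; the inductive step you sketch for deeper cells is exactly the rescaling argument the paper uses (in the proof of Theorem~\ref{thm:mdi_with_correlation}).

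The one genuine methodological difference is how the maximizer is certified. You propose to clear denominators and reduce, block by block, to a finite family of polynomial nonnegativity checks on bounded intervals. The paper does something different and rather specific: it differentiates the explicit formula, bounds $|\partial_s L^\star(1,s)|$ uniformly to get a Lipschitz constant, and then uses a numerical grid evaluation (certified by that bound) to confine the maximizer to the central block $[1/2-2^{-\beta},1/2+2^{-\beta}]$; on that block only, it shows analytically that the derivative cannot vanish (the discriminant of a quadratic factor is negative for $\beta\ge 1$), so the criterion is monotone up to $1/2$. Both routes are legitimate for a finite range of~$\beta$; yours is conceptually cleaner (no computer assistance) but would be more laborious to execute fully, whereas the paper's hybrid argument is short once one accepts a certified numerical step. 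Be aware that if you actually carry out your polynomial route you will not get a single inequality but one per block index $j_0$ and per cell configuration, and the degrees grow with~$\beta$; this is precisely why the paper, like you, restricts to $\beta\le 5$.
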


Statement $(ii)$ in Proposition~\ref{prop:cor_vs_uncor_input} proves that a positive correlation between two variables increases the probability to split along one of these two variables. Indeed, in our particular model, the marginal effect of $X^{(3)}$ on the output must be at least twice the effect of $X^{(1)}$ or $X^{(2)}$ in order to split along $X^{(3)}$. It is likely that the way the correlation impact the variable selection in trees depends both on the sign of the correlation and on the signs of coefficients in the linear model. Nevertheless, Proposition~\ref{prop:cor_vs_uncor_input} proves that input dependence has an  influence on variable selection in CART procedures. Theorem~\ref{thm:mdi_with_correlation} provides the limiting MDI values given by theoretical and empirical CART trees, in Model~\ref{ass:last_theorem_corr}.

\begin{theorem}
	\label{thm:mdi_with_correlation}
	Let $ \beta \in \{0, \hdots, 5\}$. Assume that Model~\ref{ass:last_theorem_corr} holds. Let $\tree^{\star}$ be any theoretical CART tree and $\tree_n$ be the empirical CART tree. Then, for all $\gamma >0$, there exists $K$ such that, for all $k>K$,
	\begin{align*}
			\Big|   \mdi_{\tree_k^{\star}}(X^{(1)}) + \mdi_{\tree_k^{\star}}(X^{(2)}) - \mathds{V}[X^{(1)} + X^{(2)}] \Big| \leq \gamma.
	\end{align*}
			and 
	\begin{align*}
			\Big|   \mdi_{\tree_k^{\star}}(X^{(3)})  - \alpha^2 \mathds{V}[X^{(3)}] \Big| \leq \gamma.
	\end{align*}
	Additionally, for all $\gamma>0, \rho \in (0,1]$, there exists $K$ such that, for all $k>K$, for all $n$ large enough, with probability at least $1 - \rho$,
			\begin{align*}
			\Big|   \empmdi_{\tree_{n,k}}(X^{(1)}) + \empmdi_{\tree_{n,k}}(X^{(2)}) - \mathds{V}[X^{(1)} + X^{(2)}] \Big| \leq \gamma.
			\end{align*}
			and 
			\begin{align*}
			\Big|   \empmdi_{\tree_{n,k}}(X^{(3)})  - \alpha^2 \mathds{V}[X^{(3)}] \Big| \leq \gamma.
			\end{align*}
\end{theorem}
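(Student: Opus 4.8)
The plan is to leverage the machinery already in place — Proposition~\ref{prop:th_decomp_th3}, Theorem~\ref{thm:mdi_interaction}, and Theorem~\ref{thm:sumImportance_emp2} — together with the explicit split computations of Proposition~\ref{prop:cor_vs_uncor_input}. First I would observe that Model~\ref{ass:last_theorem_corr} fits the decomposition structure of Model~\ref{ass:decomp_m_sum}: take $\mathcal{J} = \{1,2\}$ and $\mathcal{J}^c = \{3\}$, with $m_{\mathcal{J}}(\bX^{(\mathcal{J})}) = X^{(1)} + X^{(2)}$ and $m_{\mathcal{J}^c}(X^{(3)}) = \alpha X^{(3)}$. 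The pair $(X^{(1)},X^{(2)})$ is independent of $X^{(3)}$ by hypothesis, both component functions are continuous, and the noise is Gaussian independent of $\bX$, so the structural requirements of Model~\ref{ass:decomp_m_sum} are met. The only subtlety is the density condition: $\bX$ here does not admit a density bounded below on all of $[0,1]^3$, since $(X^{(1)},X^{(2)})$ is supported on the union of diagonal squares. I would therefore either restrict the ambient space to the support $S_\beta = \big(\cup_{j} [j/2^\beta,(j+1)/2^\beta)^2\big) \times [0,1]$, on which the density \emph{is} bounded above and below, and check that all the cited results go through verbatim with $[0,1]^3$ replaced by $S_\beta$; or, more cleanly, verify directly the two consistency hypotheses of Proposition~\ref{prop:th_decomp_th3} for this specific model and then invoke Theorem~\ref{thm:mdi_interaction}.

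The second step is to establish consistency of the theoretical trees on each sub-distribution. For $\mathcal{J}^c = \{3\}$ the regression function is the one-dimensional linear function $\alpha X^{(3)}$ with $X^{(3)}$ uniform, and Theorem~\ref{thm:lin_reg} (equivalently, \cite{ScBiVe15}) gives $\lim_{k\to\infty} \Delta(m_{\mathcal{J}^c}, A_{\tree^\star_{\mathcal{J}^c,k}}(X^{(3)})) = 0$ immediately, since every split halves the active cell. For $\mathcal{J} = \{1,2\}$, the regression function is $X^{(1)}+X^{(2)}$ with $(X^{(1)},X^{(2)}) \sim \unif{\beta}$; here I would use part $(i)$ of Proposition~\ref{prop:cor_vs_uncor_input} (the split is always at the center of the support of the chosen variable for $\beta \le 5$) to show that the diameter of the active cell — more precisely the variance $\Delta(X^{(1)}+X^{(2)}, A)$ — shrinks geometrically as the tree deepens, regardless of which of the two coordinates is chosen at each node, because every split is a bisection. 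That yields $\lim_{k\to\infty}\Delta(m_{\mathcal{J}}, A_{\tree^\star_{\mathcal{J},k}}(\bX^{(\mathcal{J})})) = 0$ almost surely. With both sub-consistencies in hand, Proposition~\ref{prop:th_decomp_th3} gives consistency of every theoretical tree built on $(\bX,Y)$, and then the population half of Theorem~\ref{thm:mdi_interaction} delivers $\big|\mdi_{\tree^\star_k}(X^{(1)}) + \mdi_{\tree^\star_k}(X^{(2)}) - \mathds{V}[m_{\mathcal{J}}(\bX^{(\mathcal{J})})]\big| \le \gamma$ and $\big|\mdi_{\tree^\star_k}(X^{(3)}) - \mathds{V}[m_{\mathcal{J}^c}(X^{(3)})]\big| \le \gamma$ for $k$ large; note $\mathds{V}[m_{\mathcal{J}}(\bX^{(\mathcal{J})})] = \mathds{V}[X^{(1)}+X^{(2)}]$ and $\mathds{V}[m_{\mathcal{J}^c}(X^{(3)})] = \alpha^2 \mathds{V}[X^{(3)}]$, which are exactly the claimed limits. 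Finally, since $\mdi_{\tree^\star_k}(X^{(1)}) + \mdi_{\tree^\star_k}(X^{(2)})$ equals the MDI of the \emph{group} $\{1,2\}$, the statement for the pair is the natural object; one does not split it into two separate limits because the two trees in Figure-style constructions can shuffle mass between the individual MDIs.

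For the empirical half, I would follow the same route that underlies Theorem~\ref{thm:sumImportance_emp2} and Theorem~\ref{thm:mdi_interaction}: the uniform consistency of theoretical CART trees (now established for this model, on its support) combined with the convergence of the empirical CART partition to the theoretical one — this is the part that imports the recent tree-partition results cited after Theorem~\ref{thm:sumImportance_emp2}. Concretely, for fixed depth $K$, the empirical splits $(j_{n,A}, z_{n,A})$ converge in probability to the population splits as $n \to \infty$ (using the explicit, and in particular isolated, argmaxes from Proposition~\ref{prop:cor_vs_uncor_input}$(ii)$ whenever $|\alpha|\ne 2$, and a uniform law of large numbers for $L_{n,A}$), hence $\empmdi_{\tree_{n,K}}(X^{(j)}) \to \mdi_{\tree^\star_K}(X^{(j)})$ in probability; combining this with the population bound at level $K$ and a union bound over the finitely many nodes of a depth-$K$ tree gives, with probability at least $1-\rho$ for $n$ large, the four displayed empirical inequalities. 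The main obstacle I anticipate is \emph{not} the group-MDI identities themselves but the verification that the density-boundedness hypothesis can be legitimately relaxed to the diagonal-squares support — i.e.\ auditing Proposition~\ref{prop:th_decomp_th3}, Theorem~\ref{thm:mdi_interaction}, and the empirical-consistency argument to confirm they only use boundedness of the density on its support, not on the full cube — together with handling the non-generic boundary case $|\alpha| = 2$, where the root split direction is not unique and the argmax-stability argument for the empirical splits needs a tie-breaking / continuity argument to ensure the empirical tree still converges to \emph{some} consistent theoretical tree.
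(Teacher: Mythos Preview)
Your approach is correct and lands in the same place as the paper's, but the packaging of the consistency step differs slightly. You route through Proposition~\ref{prop:th_decomp_th3}: verify consistency of the sub-tree on $(X^{(3)},\alpha X^{(3)})$ via Theorem~\ref{thm:lin_reg} and of the sub-tree on $((X^{(1)},X^{(2)}),X^{(1)}+X^{(2)})$ via Proposition~\ref{prop:cor_vs_uncor_input}$(i)$, then combine. The paper instead works directly on the full tree and exploits a self-similarity argument: after one bisection along $X^{(1)}$ (or $X^{(2)}$) at the centre, the conditional model in each child cell is a rescaled copy of Model~\ref{ass:last_theorem_corr} with $\beta$ replaced by $\beta-1$ and the coefficients in front of $X^{(1)},X^{(2)}$ halved; iterating, after $\beta$ such cuts the distribution in each cell is genuinely uniform and Theorem~\ref{thm:lin_reg} finishes the job. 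Both routes then invoke Theorem~\ref{thm:mdi_interaction} for the population and empirical conclusions. Your version is more modular; the paper's is more explicit about \emph{why} Proposition~\ref{prop:cor_vs_uncor_input}$(i)$ propagates to every cell (namely, the rescaling), which is the content you are implicitly citing.

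On the two issues you flag: the density lower bound is indeed not satisfied on $[0,1]^3$, and the paper handles this exactly as you suggest, remarking (in the proof of Proposition~\ref{prop:cor_vs_uncor_input}) that the population half of Theorem~\ref{thm:mdi_interaction} does not require it; the empirical half then goes through because once the cells are small enough the conditional distribution is uniform on a rectangle. As for ties, note that the symmetry between $X^{(1)}$ and $X^{(2)}$ means the argmax in $j$ is \emph{never} isolated (not only at $|\alpha|=2$), so your ``isolated argmax'' remark needs adjusting; what saves the empirical argument is that convergence is to \emph{some} theoretical tree, not a fixed one, exactly as in the proof of Theorem~\ref{thm:sumImportance_emp2}.
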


Theorem~\ref{thm:mdi_with_correlation} gives the limiting values for the MDI. Unfortunately, since $X^{(1)}$ and $X^{(2)}$ are correlated, it is only possible to have information on the sum of the two MDI of $X^{(1)}$ and $X^{(2)}$ rather than on each one of them. According to Lemma~\ref{lemma:correlationX1X2}, a simple calculation shows that the sum of importances of $X^{(1)}$ and $X^{(2)}$ satisfies, 
\begin{align*}
\mathds{V}[X^{(1)} + X^{(2)}] 
& = 2 \mathds{V}[X^{(1)}]  + 2  \mathds{V}[X^{(1)}] \Big( 1 - \frac{1}{2^{2\beta}} \Big)\\
& > \mathds{V}[X^{(1)}] + \mathds{V}[X^{(2)}].
\end{align*}
Therefore, the group of variables $X^{(1)}$ and $X^{(2)}$ has a larger importance because of their positive correlation. In this case, the amount of information provided by the two variables is larger than that provided by each one of them. This is a very important difference compared to the independent additive case, in which the sum of contributions of each variable is equal to the contributions of all variables. Here, even without interaction effects, this property does not hold, which leads to larger MDI for variables that are positively correlated. As mentioned before, the opposite effect would hold if the variables had a negative correlation or if they had coefficients of opposite signs in the linear model. The impact of correlation has been thoroughly investigated empirically for MDI \cite{nicodemus2009predictor} and MDA \cite{StBoKnAuZe08} but we have theoretically shown above under Model~\ref{ass:last_theorem_corr} that correlation changes the MDI.  

Even if the limiting value $
\mdi_{\tree_K^{\star}}(X^{(1)}) + \mdi_{\tree_K^{\star}}(X^{(2)}) 
$
is known, we have no information on how this quantity is split between variable $X^{(1)}$ and $X^{(2)}$ inside the tree $\tree_K^{\star}$. Indeed, we can find two theoretical trees which produce very different MDI as stated in Lemma~\ref{lem:diff_imp_corr}.
\begin{lemme} \label{lem:diff_imp_corr}
	Let $ \beta \in \{0, \hdots, 5\}$. Assume that Model~\ref{ass:last_theorem_corr} holds. Then, there exists two theoretical trees $\mathcal{T}_{1}$ and $\mathcal{T}_{2}$ such that
	\begin{align*}
	\lim\limits_{k \to \infty} \Big( \mdi_{\mathcal{T}_{2,k}}(X^{(1)}) - \mdi_{\mathcal{T}_{1,k}}(X^{(1)}) \big) = \frac{1}{3} - \frac{1}{3} \Big( \frac{1}{4}\Big)^{\beta}.	
	\end{align*} 
\end{lemme}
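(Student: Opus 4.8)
The plan is to exhibit the two trees explicitly and exploit the symmetry of Model~\ref{ass:last_theorem_corr} under the exchange of $X^{(1)}$ and $X^{(2)}$. Call a cell $A\subset[0,1]^3$ \emph{diagonal} if its projection onto the $(X^{(1)},X^{(2)})$-plane has the form $[a,b]^2$ with $a,b\in\{j2^{-\beta}\}$, i.e.\ $A$ is the union of an integer number of the squares defining $\unifbeta$, and \emph{multi-block} if $[a,b]$ contains at least two of those intervals. The root is diagonal, and by Proposition~\ref{prop:cor_vs_uncor_input}$(i)$ (which forces every split along $X^{(1)},X^{(2)}$ or $X^{(3)}$ to be at the center of the corresponding side) any cell of a theoretical CART tree stays diagonal as long as multi-block cells are only split along these three coordinates, the $X^{(1)}$- and $X^{(2)}$-sides being halved simultaneously. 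The key observation is that on a multi-block diagonal cell $A$ the events $\{X^{(1)}<z\}$ and $\{X^{(2)}<z\}$ coincide almost surely whenever $z$ is a block boundary (in particular when $z$ is the center of the $X^{(1)}$-side): the candidate splits $(1,z)$ and $(2,z)$ then produce the \emph{same} children, so $L^{\star}_A(1,z)=L^{\star}_A(2,z)$; moreover $A$ is symmetric in $(X^{(1)},X^{(2)})$, so these two coordinates are always tied in $\argmax_{(j,z)}L^{\star}_A(j,z)$ on a multi-block cell.

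I would then build $\mathcal{T}_2$ by running the theoretical CART recursion and, whenever the current cell is multi-block and a coordinate in $\{1,2\}$ belongs to the argmax, splitting along $X^{(1)}$ (otherwise following any fixed tie-breaking rule). By the observation above this is a genuine theoretical CART tree. A short monotonicity argument shows every branch eventually leaves the multi-block regime: on a multi-block cell the gain of an $X^{(1)}$-split is bounded below by a positive constant depending only on the (fixed) side-length, while repeated $X^{(3)}$-splits drive $\alpha^2\mathds{V}[X^{(3)}\,|\,\bX\in A]$ to $0$ (the $X^{(3)}$-range is halved each time), so after finitely many steps an $X^{(1)}$-split is selected again, and after $\beta$ such splits the branch has entered one of the $2^{\beta}$ single-square cells $S_\ell\times I_\ell$, $S_\ell=[\ell2^{-\beta},(\ell+1)2^{-\beta})^2$. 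Let $\mathcal{T}_1$ be the mirror tree obtained from $\mathcal{T}_2$ by exchanging the roles of $X^{(1)}$ and $X^{(2)}$ in every split; by the $(X^{(1)}\!\leftrightarrow\!X^{(2)})$-invariance of the distribution of $\bX$ and of $m(\bx)=x^{(1)}+x^{(2)}+\alpha x^{(3)}$, $\mathcal{T}_1$ is again a theoretical CART tree and $\mdi_{\mathcal{T}_{1,k}}(X^{(1)})=\mdi_{\mathcal{T}_{2,k}}(X^{(2)})$ for every $k$. Hence it suffices to compute $\lim_k\bigl(\mdi_{\mathcal{T}_{2,k}}(X^{(1)})-\mdi_{\mathcal{T}_{2,k}}(X^{(2)})\bigr)$. (For $\beta=0$ there are no multi-block splits to reroute and the statement is trivial with $\mathcal{T}_1=\mathcal{T}_2$.)

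By construction $X^{(2)}$ is never used to split a multi-block cell of $\mathcal{T}_2$, so $\mdi_{\mathcal{T}_{2,k}}(X^{(2)})$ only collects contributions from cells contained in a single square $S_\ell$. Conditionally on $\{\bX\in S_\ell\}$ the variables $X^{(1)},X^{(2)},X^{(3)}$ are independent and uniform on intervals, and the regression function is additive; rescaling each coordinate to $[0,1]$ (an affine, hence CART-invariant, change) and applying Theorem~\ref{thm:additive_models} to the corresponding subtree shows that its MDI of $X^{(2)}$ converges to $\mathds{V}[X^{(2)}\,|\,\bX\in S_\ell]=4^{-\beta}/12$. Summing over the $2^{\beta}$ squares, each weighted by $\mathds{P}[\bX\in S_\ell]=2^{-\beta}$, gives $\lim_k\mdi_{\mathcal{T}_{2,k}}(X^{(2)})=4^{-\beta}/12$. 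On the other hand Theorem~\ref{thm:mdi_with_correlation} gives $\lim_k\bigl(\mdi_{\mathcal{T}_{2,k}}(X^{(1)})+\mdi_{\mathcal{T}_{2,k}}(X^{(2)})\bigr)=\mathds{V}[X^{(1)}+X^{(2)}]$, and Lemma~\ref{lemma:correlationX1X2} with $\mathds{V}[X^{(1)}]=\mathds{V}[X^{(2)}]=1/12$ yields $\mathds{V}[X^{(1)}+X^{(2)}]=2\,\mathds{V}[X^{(1)}]\bigl(1+\corr(X^{(1)},X^{(2)})\bigr)=\tfrac13-\tfrac16 4^{-\beta}$. Combining,
\begin{align*}
\lim_{k\to\infty}\bigl(\mdi_{\mathcal{T}_{2,k}}(X^{(1)})-\mdi_{\mathcal{T}_{1,k}}(X^{(1)})\bigr)
&=\lim_{k\to\infty}\bigl(\mdi_{\mathcal{T}_{2,k}}(X^{(1)})-\mdi_{\mathcal{T}_{2,k}}(X^{(2)})\bigr)\\
&=\mathds{V}[X^{(1)}+X^{(2)}]-2\cdot\frac{4^{-\beta}}{12}=\frac13-\frac13\Big(\frac14\Big)^{\beta},
\end{align*}
which is the claimed identity.

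The main obstacle is the second paragraph: verifying that the rerouted tree $\mathcal{T}_2$ is a bona fide theoretical CART tree and that every branch reaches the single-square regime after exactly $\beta$ splits along $X^{(1)}$. This relies on (i) the equality $L^{\star}_A(1,z)=L^{\star}_A(2,z)$ at block boundaries of multi-block diagonal cells, which follows from the coincidence $\{X^{(1)}<z\}=\{X^{(2)}<z\}$ there, and (ii) the quantitative comparison between the constant positive gain of an $X^{(1)}$-split and the geometrically decaying gain of further $X^{(3)}$-splits, ensuring $X^{(1)}$-splits keep being chosen — both elementary consequences of Proposition~\ref{prop:cor_vs_uncor_input}. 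Everything else is bookkeeping together with one application each of Theorem~\ref{thm:additive_models}, Theorem~\ref{thm:mdi_with_correlation}, and Lemma~\ref{lemma:correlationX1X2}.
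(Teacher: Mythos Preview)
Your proof is correct and follows essentially the same strategy as the paper: construct two theoretical trees related by the $X^{(1)}\leftrightarrow X^{(2)}$ symmetry (one routing all early multi-block splits through $X^{(1)}$, the other through $X^{(2)}$), observe that on such cells the two candidate splits are tied, and then exploit the sum identity from Theorem~\ref{thm:mdi_with_correlation}. The only difference is computational: the paper evaluates the MDI gap by summing the variance reductions of the first $\beta$ splits directly as $\sum_{j=0}^{\beta-1}\tfrac{1}{4}(1/4)^j$, whereas you recover the same value as $\mathds{V}[X^{(1)}+X^{(2)}]-2\cdot 4^{-\beta}/12$ by invoking Theorem~\ref{thm:additive_models} on the single-square subtrees.
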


Even in the case of a simple linear model with correlated input, it is not wise to use the importance of a single tree to measure the impact of this variable on the output, since this measure can vary drastically between two different trees. Instead, one should rather use an average of this measure across many shallow trees, hoping that randomizing the eligible variables for splitting ensure enough tree diversity in the forest, which in turn, leads to more balanced variable importances. 

The proof of Lemma~\ref{lem:diff_imp_corr} relies on exhibiting a tree whose early splits are made along $X^{(1)}$ only, until $\bX$ is uniformly distributed in each cell. For this tree, the importance of $X^{(1)}$ is larger than the importance of $X^{(2)}$ Since $X^{(1)}$ and $X^{(2)}$ have symmetric roles, one can also build a tree whose early splits are made along $X^{(2)}$ only. The difference of importance for $X^{(1)}$ between these two trees can be computed exactly, thanks to statement $(i)$ and $(ii)$ in Proposition~\ref{prop:cor_vs_uncor_input}. 

\paragraph{Acknowledgements.} We greatly thank the associate editor and the two referees for their careful reading and numerous insightful comments that strongly helped to improve the quality and the readability of the manuscript.

\section{Proofs}
\label{sec:proofs}

\begin{proof}[Proof of Proposition~\ref{thm:sumImportance}]
	Let $\tree^{\star}$ be a theoretical CART tree and define, for all $k \in \mathds{N}$, the variance $u_k$ of the tree $\tree^{\star}_k$ (the truncation of $\tree$ at level $k$) as
	\begin{align*}
	u_k = \sum_{{\ell}=1}^{n_{cell,k}} \mathds{P}[\bX \in A_{\ell, k}] \mathds{V}[Y | \bX \in A_{\ell, k} ],
	\end{align*}
	where $n_{cell,k}$ stands for the number of terminal nodes in $\tree_k^{\star}$ and $\{ A_{\ell,k}, \ell = 1, \hdots, n_{cell,k}\}$ is the set of terminal nodes in $\tree_k^{\star}$. Let $f_k(\bx) = \mathds{V}[Y|\bX' \in A_k(\bx)]$. 
	By definition of the impurity measure, note that, for all $k$, the quantity $u_k - u_{k+1}$ corresponds to the reduction of impurity between level $k$ and level $k+1$. Therefore, the impurity measure rewrites, for any $K \in \mathds{N}$ as
	\begin{align}
	\sum_{j=1}^d \mdi_{\tree_K^{\star}}(X^{(j)}) & = \sum_{k=0}^{K-1} (u_k - u_{k+1}) \nonumber\\
	& = u_0 - u_K, 
	\end{align}
	where $u_0 = \mathds{V}[Y]$ by definition. Additionally, 
	\begin{align*}
	\mathds{E}[f_k(\bX)] & =  \mathds{E} \Big[ \sum_{{\ell}=1}^{n_{cell,k}} \mathds{V}[Y|\bX' \in A_{\ell, k}] \mathds{1}_{\bX \in A_{\ell, k}} \Big]\\
	& = \sum_{{\ell}=1}^{n_{cell,k}} \mathds{P}[\bX \in A_{\ell, k}]  \mathds{V}[Y|\bX' \in A_{\ell, k}]\\
	& = u_k,
	\end{align*}
	yielding the desired conclusion
	\begin{align}
	\sum_{j=1}^d \mdi_{\tree_K^{\star}}(X^{(j)}) 
	& = \mathds{V}[Y] - \mathds{E}[f_K(\bX)]. \label{eq:proof_summdi_pop}
	\end{align}
\end{proof}

\begin{proof}[Proof of Corollary~\ref{cor:sumImportance}]
Using notations of the proof of Proposition~\ref{thm:sumImportance}, for all $\bx \in [0,1]^d$, $	f_k(\bx)  \leq 4 \|m\|_{\infty}^2 + \sigma^2$ and, by assumption, almost surely, $\lim_{k \to \infty} f_k(\bX) = \sigma^2$. According to the dominated convergence theorem,
\begin{align*}
\lim\limits_{k \to \infty} u_k = \lim\limits_{k \to \infty} \mathds{E}[f_k(\bX)]  = \sigma^2.
\end{align*}
Therefore, 
\begin{align}
\lim\limits_{K \to \infty} \sum_{j=1}^d \mdi_{\tree_K^{\star}}(X^{(j)}) 
& = \mathds{V}[Y] - \sigma^2.
\end{align} 
\end{proof}

\begin{proof}[Proof of Proposition~\ref{thm:sumImportance_emp}]
 Equation~\eqref{eq:proof_summdi_pop} in the proof of Proposition~\ref{thm:sumImportance}, written with empirical quantities, leads directly to the first statement. 	
\end{proof}

\begin{proof}[Proof of Theorem~\ref{thm:sumImportance_emp2}]
	
	\item 
 \paragraph{Uniform convergence of theoretical trees.}
 Note that a tree is nothing but a sequence of splits. Therefore, any tree $\tree$ can be seen as a sequence $(v_n)$ such that
 \begin{itemize}
 	\item $(v_1, \hdots, v_d)$ represents the first split of the tree defined, for a split occurring along variable $j$ at position $s$, as 
 	\begin{align*}
 	\left\lbrace 
 	\begin{array}{ll}
 	v_{\ell} = 0 & \textrm{for} ~\ell \neq j \\
	v_j = \frac{s}{d^4}
 	\end{array}
 	\right.
 	\end{align*}
 	
 	\item Each block $(v_{md+1}, \hdots , v_{(m+1)d})$ represents the $m$th split of the tree occurring along variable $j$ at position $s$ defined as
 	\begin{align*}
 	\left\lbrace 
 	\begin{array}{ll}
 	v_{\ell} = 0 & \textrm{for} ~\ell \neq j \\
 	v_j = \frac{s}{(d(m+1))^4}
 	\end{array}
 	\right.
 	\end{align*}
 	
 	\item Splits are ordered arbitrarily. 
 	
 	\item In order to obtain a complete tree, we associate with all nodes whose construction is stopped, the split $(0, \hdots , 0)$. 
 \end{itemize} 
Since the split position belongs to $[0,1]$, we have $\sum_{{\ell}=1}^{\infty}  v_{\ell}^2 < \infty$. Besides, 
\begin{align*}
\sum_{{\ell}=1}^{\infty} \ell^2 v_{\ell}^2 \leq \sum_{{\ell}=1}^{\infty} \frac{1}{\ell^2} = \frac{\pi^2}{6}.
\end{align*}
For any tree $\tree$ represented by the sequence $(v_n)$, we have 
$$(v_n) \in B(0, \pi^2/6) = \big\lbrace v, \sum_{{\ell}=1}^{\infty} \ell^2 v_{\ell}^2 \leq \pi^2/6 \big\rbrace,
$$
 with $B(0, \pi^2/6)$ being a compact of $\ell^2(\mathds{N})$. Now, let $\gamma >0$ and
 \begin{align*}
\mathcal{A}_{k, \gamma} = \big\lbrace \tree^{\star}, \mathds{E}[\Delta (m, A_{\tree_k^{\star}}(\bX))] \geq \gamma \big\rbrace
 \end{align*}  
Set $\mathcal{A}_{\infty,\gamma} = \cap_{k=0}^{\infty} \mathcal{A}_{k, \gamma}$. Since the population CART-split criterion $L^{\star}$, defined in \eqref{eq:theoretical_cart_crit} is non negative, we have
\begin{align*}
\mathds{E}[\Delta (m, A_{\tree_{k+1}}(\bX))] \leq \mathds{E}[\Delta (m, A_{\tree_k}(\bX))].
\end{align*}
Therefore, for all $k$, $\mathcal{A}_{k+1, \gamma} \subset \mathcal{A}_{k, \gamma}$. Because of the uniform continuity of the splitting criterion (continuous on a compact), for all $k \in \mathds{N}$, the sets $\mathcal{A}_{k, \gamma}$ are closed.

Assume that for all $k$, $\mathcal{A}_{k, \gamma} \neq \emptyset$. We know that $B(0, \pi^2/6)$ is compact and that, for all $k$, $\mathcal{A}_{k, \gamma}$ is a closed subset of $B(0, \pi^2/6)$. Then, according to Cantor's intersection theorem $\mathcal{A}_{\infty, \gamma} \neq \emptyset$. Consequently, there exists $\tree^{\star} \in \mathcal{A}_{\infty, \gamma}$. By definition of $\mathcal{A}_{\infty,\gamma}$, for all $k$, 
\begin{align}
\mathds{E}[\Delta (m, A_{\tree_k^{\star}}(\bX))] \geq \gamma. \label{eq:proof_absurd}
\end{align}
Again, by definition of $\mathcal{A}_{\infty,\gamma}$, $\tree^{\star}$ is a theoretical tree, which is consistent by assumption. Consequently, there exists $K$ such that, for all $k > K$, 
\begin{align*}
\mathds{E}[\Delta (m, A_{\tree_k^{\star}}(\bX))] \leq \gamma,
\end{align*}
which is absurd, according to equation \eqref{eq:proof_absurd}.  Therefore, there exists $K>0$ such that $\mathcal{A}_{K, \gamma} = \emptyset$. All in all, we proved that, for all $\gamma>0$, there exists $K>0$ such that, for all theoretical tree $\tree^{\star}$, for all $k >K$, 
\begin{align}
\mathds{E}[\Delta (m, A_{\tree_{k}}(\bX))] < \gamma. \label{ineq:theoretical_tree_uniform}
\end{align}

\paragraph{Contribution of cells with small measure.}

Fix $\gamma$ and take $K$ such that inequality (\ref{ineq:theoretical_tree_uniform}) holds. For any theoretical tree $\tree_K^{\star}$ of level $K$, the MDI associated with this tree is defined as
\begin{align*}
\mdi_{\tree^{\star}_K} = \sum_{A \in \tree^{\star}_K} \mu (A) L^{\star}_A,
\end{align*}
where the sum ranges for any node $A$ in the tree, $\mu(A)$ is the measure of the cell $A$ defined as 
\begin{align*}
\mu(A) = \int_{x \in A} f(\bx) \textrm{d}\bx,
\end{align*}
where $f$ is the density of $\bX$ and $L^{\star}_A$ the value of the population CART-split criterion at the best split of the cell $A$. Similarly, we let
\begin{align*}
\empmdi_{\tree_{n,K}} = \sum_{A \in \tree_{n,K}} \mu_n (A) L_{n,A},
\end{align*}
where $\tree_{n,K}$ is the empirical CART tree truncated at level $K$, $A$ a node of the tree, $\mu_n (A)$ the empirical measure of $A$ and $L_{n,A}$ the empirical splitting criterion in cell $A$. 
Let $\mdi_{\tree^{\star}_K, \gamma }$ and $\empmdi_{\tree_{n,K}, \gamma}$ be computed by removing the variance (empirical or population version) associated to cells $A$ for which $\mu(A) < \gamma$. Define also $\mdi_{\tree_{n,K}, \gamma}$ as the population version of the MDI but computed with the empirical partition $\tree_{n,K}$ where the cells $A$ for which $\mu(A) < \gamma$ have been removed. Now, we want to control
\begin{align}
\inf_{\tree^{\star}} |\empmdi_{\tree_{n,K}} - \mdi_{\tree^{\star}_K}|
& \leq |\empmdi_{\tree_{n,K}} - \empmdi_{\tree_{n,K}, \gamma} | + | \empmdi_{\tree_{n,K}, \gamma} - \mdi_{\tree_{n,K}, \gamma}  |  
\nonumber \\
&  \quad  + \inf_{\tree^{\star}}
| \mdi_{\tree_{n,K}, \gamma} - \mdi_{\tree^{\star}_K, \gamma}| + \inf_{\tree^{\star}} | \mdi_{\tree^{\star}_K, \gamma} - \mdi_{\tree^{\star}_K}|. \label{eq:main_proof_decomp}
\end{align}  
Since $\bX$ admits a density on $[0,1]^d$, the population CART-split criterion varies continuously as a function of the cell on which it is computed. Thus, the third term in (\ref{eq:main_proof_decomp}) is controlled as soon as the empirical partition is close to the theoretical partition. Lemma $3$ in \cite{ScBiVe15} can be easily extended for any continuous regression function $m$ and any density which is bounded from above and below by some positive constants. Therefore, with probability $1-\rho$, for all $n$ large enough, 
\begin{align}
\inf_{\tree^{\star}}
|\mdi_{\tree_{n,K}, \gamma} - \mdi_{\tree^{\star}_K, \gamma}| \leq \gamma. \label{eq:main_proof_decomp1}
\end{align}
Last term in (\ref{eq:main_proof_decomp}) can be treated directly by noting that the theoretical criterion is bounded above by $4 \|m\|_{\infty}^2$. Therefore, we have
\begin{align}
|\mdi_{\tree^{\star}_K, \gamma} - \mdi_{\tree^{\star}_K}| \leq  2^{K+2} \|m\|_{\infty}^2 \gamma. \label{eq:main_proof_decomp2}
\end{align}
To control the second term, we make use of the results of \cite{wager2015adaptive}, in particular Corollary $11$ which can be adapted to a Gaussian noise by noting that with probability $1-\rho$, $\max_{1\leq i \leq n} |\varepsilon_i| \leq C_{\rho}\log n$. Note that our setting is simpler than that of \cite{wager2015adaptive} where the dimension $d$ grows to infinity with $n$. Therefore, with probability at least $1-\rho$, for all $n$ large enough,
\begin{align}
|\empmdi_{\tree_{n,K}, \gamma} - \mdi_{\tree_{n,K}, \gamma} | \leq \gamma.\label{eq:main_proof_decomp3}
\end{align}
Regarding the first term in (\ref{eq:main_proof_decomp}), for any cell $A$ such that $\mu(A) < \gamma$, 
\begin{align*}
\mu_n(A) \mathds{V}_n[Y | X \in A] \leq 4\mu_n(A) \|m\|_{\infty}^2 +  \mu_n(A) \mathds{V}_n[\varepsilon | X \in A].
\end{align*}
With probability $1-\rho$, for all $n$ large enough, $\max\limits_{1\leq i \leq n} |\varepsilon_i| \leq C_{\rho}\log n$ and, if $n \mu_n(A) \geq \sqrt{n}$, 
\begin{align*}
\frac{1}{n \mu_n(A)} \sum_{i, X_i \in A} \varepsilon_i^2 \leq 2\sigma^2.
\end{align*}
Therefore, with probability $1-\rho$, for all $n$ large enough, 
\begin{align}
4\mu_n(A) \|m\|_{\infty}^2 + \mu_n(A) \mathds{V}_n[\varepsilon | X \in A] 
& \leq 8 \gamma \|m\|_{\infty}^2 + \frac{N_n(A) }{n} \frac{1}{N_n(A)} \sum_{i, X_i \in A} \varepsilon_i^2 \nonumber \\
& \leq 8 \gamma \|m\|_{\infty}^2 + 4 \gamma \sigma^2 \mathds{1}_{N_n(A) \geq \sqrt{n}} + \frac{C_{\rho} \log n }{\sqrt{n}} \mathds{1}_{N_n(A) < \sqrt{n}} \nonumber \\
& \leq 8 \gamma \|m\|_{\infty}^2 + 4 \gamma \sigma^2. \label{eq:main_proof_decomp4}
\end{align}
Injecting inequalities (\ref{eq:main_proof_decomp1})-(\ref{eq:main_proof_decomp4}) into (\ref{eq:main_proof_decomp}), with probability $1- \rho$, for all $n$ large enough, 
\begin{align}
\inf_{\tree^{\star}} |\empmdi_{\tree_{n,K}} - \mdi_{\tree^{\star}_K}|
& \leq 8 \gamma \|m\|_{\infty}^2 + 4 \gamma \sigma^2 + 2\gamma    + 2^{K+2} \|m\|_{\infty}^2 \gamma. \label{eq:final_decomp_proof}
\end{align}  
Since, 
\begin{align*}
|\empmdi_{\tree_{n,K}} - \mathds{V}[m(\bX)]| \leq \inf_{\tree^{\star}} |\empmdi_{\tree_{n,K}} - \mdi_{\tree^{\star}_K}| +  | \mdi_{\tree^{\star}_K} - \mathds{V}[m(\bX)]|,
\end{align*}
we conclude, from (\ref{ineq:theoretical_tree_uniform}), (\ref{eq:final_decomp_proof}) and Corollary~\ref{cor:sumImportance}, with probability $1 - \rho$, for all $n$ large enough,
\begin{align*}
|\empmdi_{\tree_{n,K}} - \mathds{V}[m(\bX)]| \leq 3\gamma  + 8 \gamma \|m\|_{\infty}^2 + 4 \gamma \sigma^2 + 2^{K+2} \|m\|_{\infty}^2 \gamma.
\end{align*}
Since $\rho$ and $\gamma$ are arbitrary, this concludes the proof. 

\end{proof}

\begin{proof}[Proof of Lemma~\ref{Lem:decompt_th_3}]

Let $A = \prod_{j=1}^d [a_j, b_j] \subset [0,1]^d$ be a generic cell. Recall that the regression model satisfies 
\begin{align*}
Y = m_{\mathcal{J}}(\bX^{(\mathcal{J})}) + m_{\mathcal{J}^c}(\bX^{(\mathcal{J}^c)})+ \varepsilon.
\end{align*}
For all $j \in \mathcal{J}$, and all splitting position $s$, define the two resulting cells 
\begin{align*}
A_L = A \cap \{\bx, x^{(j)} < s\} \quad \textrm{and} \quad  A_R = A \cap \{\bx, x^{(j)} \geq s\},
\end{align*} 
where, for notation brevity, the dependence on $j$ and $s$ is omitted. 	The variance reduction associated to any split $(j,s)$ is defined as 
\begin{align*}
L^{\star}_{A}(j,s) & =  \mathds{V}[m(\bX) | \bX \in A ] - \mathds{P}[\bX \in A_L | \bX \in A] \mathds{V}[m(\bX) | \bX \in A_L ] \\
& \quad - \mathds{P}[\bX \in A_R | \bX \in A] \mathds{V}[m(\bX) | \bX \in A_R ]\\
&  =  \mathds{V}[m_{\mathcal{J}}(\bX^{(\mathcal{J})}) | \bX \in A ] - \mathds{P}[\bX \in A_L | \bX \in A] \mathds{V}[m_{\mathcal{J}}(\bX^{(\mathcal{J})}) | \bX \in A_L ] \\
& \quad - \mathds{P}[\bX \in A_R | \bX \in A ] \mathds{V}[m_{\mathcal{J}}(\bX^{(\mathcal{J})}) | \bX \in A_R ]\\
& \quad +  \mathds{V}[m_{\mathcal{J}^c}(\bX^{(\mathcal{J}^c)}) | \bX \in A ] - \mathds{P}[\bX \in A_L| \bX \in A] \mathds{V}[m_{\mathcal{J}^c}(\bX^{(\mathcal{J}^c)}) | \bX \in A_L ] \\
& \quad - \mathds{P}[\bX \in A_R| \bX \in A] \mathds{V}[m_{\mathcal{J}^c}(\bX^{(\mathcal{J}^c)})| \bX \in A_R ].
\end{align*}
Since the split is performed along the $j$th coordinate, 
\begin{align}
\mathds{V}[m_{\mathcal{J}^c}(\bX^{(\mathcal{J}^c)}) | \bX \in A_L ] = \mathds{V}[m_{\mathcal{J}^c}(\bX^{(\mathcal{J}^c)}) | \bX \in A_R ] = \mathds{V}[m_{\mathcal{J}^c}(\bX^{(\mathcal{J}^c)}) | \bX \in A ]. \label{eq:crit_indep}
\end{align}
Consequently, the variance reduction satisfies 
\begin{align}
L^{\star}_{A}(j,s) 
& =  \mathds{V}[m_{\mathcal{J}}(\bX^{(\mathcal{J})}) | \bX \in A ] - \mathds{P}[\bX \in A_L  | \bX \in A] \mathds{V}[m_{\mathcal{J}}(\bX^{(\mathcal{J})}) | \bX \in A_L ] \nonumber \\
& \quad - \mathds{P}[\bX \in A_R | \bX \in A] \mathds{V}[m_{\mathcal{J}}(\bX^{(\mathcal{J})}) | \bX \in A_R ]. \label{eq:crit_rewritting}
\end{align}
Since $\bX^{(\mathcal{J})}$ and $\bX^{(\mathcal{J})}$ are independent, letting $A^{(\mathcal{J})} = \prod_{j \in \mathcal{J}} [a_j, b_j]$ be the projection of the cell $A$ along coordinates in $\mathcal{J}$, we have
\begin{align}
L^{\star}_{A}(j,s) 
& = L^{\star}_{A^{(\mathcal{J})}}(j,s)\nonumber \\
& =  \mathds{V}[m_{\mathcal{J}}(\bX^{(\mathcal{J})}) | \bX \in A^{(\mathcal{J})} ] - \mathds{P}[\bX \in A^{(\mathcal{J})}_L  | \bX \in A^{(\mathcal{J})}] \mathds{V}[m_{\mathcal{J}}(\bX^{(\mathcal{J})}) | \bX \in A^{(\mathcal{J})}_L ] \nonumber \\
& \quad - \mathds{P}[\bX \in A^{(\mathcal{J})}_R | \bX \in A^{(\mathcal{J})}] \mathds{V}[m_{\mathcal{J}}(\bX^{(\mathcal{J})}) | \bX \in A^{(\mathcal{J})}_R ]. \label{eq:crit_rewritting2}
\end{align}
Therefore, for all $j \in \mathcal{J}$, the criterion $L^{\star}_{A}(j,s) $ does not depend on $m_{\mathcal{J}^c}$ and is equal to its restriction over the cell $A^{(\mathcal{J})}$. Besides, according to equation~(\ref{eq:crit_rewritting}), any split $j \in \mathcal{J}$ does not change the variance of $m_{\mathcal{J}^c}$.

\end{proof}

\begin{proof}[Proof of Proposition~\ref{prop:th_decomp_th3}]
	
Let $\bx \in [0,1]^d$ such that 
\begin{itemize}
	\item[$(i)$] any theoretical tree $\tree^{\star}_{k,\mathcal{J}}$ built on the distribution $(\bX^{(\mathcal{J})}, m_{\mathcal{J}}(\bX^{(\mathcal{J})}))$ is consistent, that is 
	$$
	\lim\limits_{k \to \infty} \Delta(m_{\mathcal{J}} , A^{(\mathcal{J})}_{\tree^{\star}_{k,\mathcal{J}}}(\bx^{(\mathcal{J})}) ) = 0.
	$$
	
	\item[$(ii)$] any theoretical tree $\tree^{\star}_{k,\mathcal{J}^c}$ built on the distribution $(\bX^{(\mathcal{J}^c)}, m_{\mathcal{J}^c}(\bX^{(\mathcal{J}^c)}))$ is consistent, that is 
	$$
	\lim\limits_{k \to \infty} \Delta(m_{\mathcal{J}^c} , A^{(\mathcal{J}^c)}_{\tree^{\star}_{k,\mathcal{J}^c}}(\bx^{(\mathcal{J}^c)}) ) = 0.
	$$
\end{itemize}
Consider a theoretical tree $\tree^{\star}_{k}$ built on the distribution $(\bX, Y)$. For all $k$, let $A_k(\bx)$ be the cell in $\tree^{\star}_{k}$ containing $\bx$. For all $k$, we let 
\begin{itemize}
	\item[$(i)$] $A_{\phi(k)}(\bx)$ be the subsequence of $A_k(\bx)$ composed of cells split by a coordinate in $\mathcal{J}$, and 
	$A^{(\mathcal{J})}_{\phi(k)}(\bx)$ the projection of $A_{\phi(k)}(\bx)$ over $\mathcal{J}$,
	
	\item[$(ii)$] $A_{\psi(k)}(\bx)$ be the subsequence of $A_k(\bx)$ composed of cells split by a coordinate in $\mathcal{J}^c$, and $A^{(\mathcal{J}^c)}_{\psi(k)}(\bx)$ the projection of $A_{\psi(k)}(\bx)$ over $\mathcal{J}^c$.
\end{itemize}

According to Lemma~\ref{Lem:decompt_th_3}, the restrictions of the function $m_{\mathcal{J}}$ over $A \in \{A^{(\mathcal{J})}_{\phi(k)+1}(\bx), $ $ \hdots , A^{(\mathcal{J})}_{\phi(k+1)}(\bx) \}$ are equal. Besides, for any $j \in \mathcal{J}$ and $s$, the splitting criteria $L^{\star}_{A}(j,s)$ for $A \in \{A^{(\mathcal{J})}_{\phi(k)+1}(\bx), \hdots , A^{(\mathcal{J})}_{\phi(k+1)}(\bx) \}$ are the same. 
Therefore, the sequence $A^{(\mathcal{J})}_{\phi(k)}(\bx)$ is a sequence of cells resulting from a theoretical tree built on the distribution $(\bX^{(\mathcal{J})}, m_{\mathcal{J}}(\bX^{(\mathcal{J})}))$. Similarly, the sequence $A^{(\mathcal{J}^c)}_{\psi(k)}(\bx)$ is a sequence of cells resulting from a theoretical tree built on the distribution $(\bX^{(\mathcal{J}^c)}, m_{\mathcal{J}^c}(\bX^{(\mathcal{J}^c)}))$.

Let $\tree^{\star}_{\mathcal{J}}$ be any theoretical tree whose sequence of cells satisfies $	A_{\tree^{\star}_{k,\mathcal{J}}} (\bx^{(\mathcal{J})}) = A^{(\mathcal{J})}_{\phi(k)}(\bx)$.
Similarly, let 
$\tree^{\star}_{\mathcal{J}^c}$ be any theoretical tree whose sequence of cells satisfies $
A_{\tree^{\star}_{k,\mathcal{J}^c}} (\bx^{(\mathcal{J}^c)}) = A^{(\mathcal{J}^c)}_{\psi(k)}(\bx)$.
Then, 
\begin{align}
\Delta(m , A_{\tree^{\star}_{k}}(\bx) ) & =  \mathds{V}[m(\bX) | \bX \in A_{\tree^{\star}_{k}}(\bx)] \nonumber \\
& = \mathds{V}[m_{\mathcal{J}}(\bX^{(\mathcal{J})}) | \bX \in A^{(\mathcal{J})}_{\tree^{\star}_{k}}(\bx)] +  \mathds{V}[m_{\mathcal{J}^c}(\bX^{(\mathcal{J}^c)})) | \bX \in A^{(\mathcal{J}^c)}_{\tree^{\star}_{k}}(\bx)] \nonumber \\
& = \Delta(m_{\mathcal{J}} , A^{(\mathcal{J})}_{\tree^{\star}_{k,\mathcal{J}}}(\bx^{(\mathcal{J})}) ) 
+ \Delta(m_{\mathcal{J}^c} , A^{(\mathcal{J}^c)}_{\tree^{\star}_{k,\mathcal{J}^c}}(\bx^{(\mathcal{J}^c)}) ). \label{eq:proof_decomp_var_finish}
\end{align}
By assumption, and assuming that $\lim\limits_{\infty} \phi, \psi = \infty$, we have  
\begin{align*}
\lim\limits_{k \to \infty} \Delta(m , A_{\tree^{\star}_{k}}(\bx) ) = 0.
\end{align*}
Now, consider the case where $\lim\limits_{\infty} \phi, \psi \neq \infty$. Since $\phi(\mathds{N}) \cup \psi(\mathds{N}) = \mathds{N}$, one can assume, without loss of generality, that $\lim\limits_{\infty} \phi = \infty$ and $\lim\limits_{\infty} \psi = p < \infty$. Thus, there exists $K$ such that for all $k>K$, splits in $A_{\tree^{\star}_{k}}(\bx)$ are performed along coordinates in $\mathcal{J}$ only. Thus, by assumption, the corresponding theoretical tree $\tree^{\star}_{k,\mathcal{J}}$ is consistent which implies that the splitting criterion $\mathcal{L}_{A_k(\bx)}(j,s)$ tends to zero, for all $j \in \mathcal{J}$. This implies that the criterion $\mathcal{L}_{A_k(\bx)}(j,s)$, for all $j \in \mathcal{J}^c$ is equal to zero (otherwise, it would be larger than the criterion $\mathcal{L}_{A_k(\bx)}(j,s)$ for $j \in \mathcal{J}$ and consequently, splits would be performed along $j \in \mathcal{J}^c$). This implies that the theoretical tree $\tree^{\star}_{K,\mathcal{J}^c}$ is fully grown, and by assumption, we have
\begin{align*}
\Delta(m_{\mathcal{J}^c} , A^{(\mathcal{J}^c)}_{\tree^{\star}_{K,\mathcal{J}^c}}(\bx^{(\mathcal{J}^c)}) ) = 0,
\end{align*} 
which concludes the proof, by equality \eqref{eq:proof_decomp_var_finish}.

\end{proof}

\begin{proof}[Proof of Theorem \ref{thm:mdi_interaction}\\]

	\item 
	\paragraph{First statement.}
	For any theoretical CART tree $\tree^{\star}$, we let, for all $k \in \mathds{N}$, 
	\begin{align*}
	u_{k, \mathcal{J}} = \sum_{{\ell}=1}^{n_{cell,k}} \mathds{P}[\bX' \in A_{\ell, k}]  \mathds{V}[m_{\mathcal{J}}(\bX^{(\mathcal{J})})|\bX \in A_{\ell, k}],
	\end{align*}
	and
	\begin{align*}
	f_{k, \mathcal{J}}(\bx) = \mathds{V}[m_{\mathcal{J}}(\bX^{(\mathcal{J})})|\bX \in A_k(\bx)].
	\end{align*}
	We define similarly $u_{k, \mathcal{J}^c}$ and $f_{k, \mathcal{J}^c}(\bx)$. By definition of the regression model, for all $\bx \in [0,1]^d$, 
	\begin{align*}
	f_{k, \mathcal{J}}(\bx) = f_{k}(\bx) - f_{k, \mathcal{J}^c}(\bx) \leq f_{k}(\bx),
	\end{align*}
	where $f_{k}(\bx) = \mathds{V}[m(\bX)|\bX \in A_{k}(\bx)]$. By assumption this quantity tends to zero for almost all $\bX \in [0,1]^d$. Besides, it satisfies $f_{k, \mathcal{J}}(\bx) \leq 4 \|m\|_{\infty}^2$. Therefore, according to the dominated convergence theorem, $\lim\limits_{k \to \infty} \mathds{E}[f_{k, \mathcal{J}}(\bX')] = 0$. Additionally, 
	\begin{align*}
	\mathds{E}[f_{k, \mathcal{J}}(\bX')] & =  \mathds{E} \Big[ \sum_{{\ell}=1}^{n_{cell,k}} \mathds{V}[m_{\mathcal{J}}(\bX^{(\mathcal{J})})|\bX \in A_{\ell, k}] \mathds{1}_{\bX' \in A_{\ell, k}} \Big]\\
	& = \sum_{{\ell}=1}^{n_{cell,k}} \mathds{P}[\bX' \in A_{\ell, k}]  \mathds{V}[m_{\mathcal{J}}(\bX^{(\mathcal{J})})|\bX \in A_{\ell, k}]\\
	& = u_{k, \mathcal{J}}.
	\end{align*}
	Consequently, 
	\begin{align}
	\lim\limits_{k \to \infty} u_{k, \mathcal{J}} = 0 \label{eq:u_j_tend_zero}
	\end{align}
	The same result holds for $u_{k, \mathcal{J}^c}$.
	%
	To conclude the proof, we just need to show that $u_{k,\mathcal{J}} - u_{k+1,\mathcal{J}}$ corresponds to the variance reduction associated with variables $\bX^{(\mathcal{J})}$ between levels $k$ and $k+1$. Recalling that 
	\begin{align*}
	u_k = \sum_{{\ell}=1}^{n_{cell,k}} \mathds{P}[\bX \in A_{\ell, k}] \mathds{V}[m(\bX) | \bX \in A_{\ell, k} ] + \sigma^2,
	\end{align*}
	the variance reduction between levels $k$ and $k+1$ is equal to 
	\begin{align*}
	u_{k} - u_{k+1} = (u_{k,\mathcal{J}} - u_{k+1,\mathcal{J}}) + (u_{k,\mathcal{J}^c} - u_{k+1,\mathcal{J}^c}),
	\end{align*}
    where
	\begin{align}
	u_{k,\mathcal{J}} - u_{k+1,\mathcal{J}} & = \sum_{{\ell}=1}^{n_{cell,k}} \mathds{V} \big[ m_{\mathcal{J}}(\bX^{(\mathcal{J})}) \big| \bX \in A_{\ell, k} \big] \mathds{P}[\bX' \in A_{\ell, k}] \nonumber \\
	&\qquad - \sum_{{\ell}=1}^{n_{cell,k+1}} \mathds{V} \big[  m_{\mathcal{J}}(\bX^{(\mathcal{J})})  \big| \bX \in A_{\ell, k+1} \big] \mathds{P} [\bX' \in A_{\ell, k}]. \label{criterion_1}
	\end{align}
	Each cell at level $k+1$ has been created $(i)$ either by splitting a cell at level $k$ into two cells, $(ii)$ or by copying a cell at level $k$, if the cell is not split. In that latter case, the contribution of this cell to $u_{k, \mathcal{J}} - u_{k+1, \mathcal{J}}$ is zero. 	Thus, we can rewrite equation (\ref{criterion_1}) as
	\begin{align}
	& u_{k,\mathcal{J}} - u_{k+1,\mathcal{J}} \nonumber \\
	= & \sum_{{\ell}=1}^{n_{\mathcal{J}, k}} 
	\Big( \mathds{P}[\bX' \in A_{\ell, k}] \mathds{V} \big[ m_{\mathcal{J}}(\bX^{(\mathcal{J})}) \big| \bX \in A_{\ell, k} \big]  - \mathds{P}[\bX' \in A_{\ell, k,L}] \mathds{V} \big[ m_{\mathcal{J}}(\bX^{(\mathcal{J})}) \big| \bX \in A_{\ell, k, L} \big]  \nonumber\\
	& \quad - \mathds{P}[\bX' \in A_{\ell, k, R}] \mathds{V} \big[ m_{\mathcal{J}}(\bX^{(\mathcal{J})}) \big| \bX \in A_{\ell, k, R} \big]  \Big)\nonumber\\
	& + \sum_{{\ell}=n_{\mathcal{J}, k}+1}^{n_{split, k}} 
	\Big( \mathds{P}[\bX' \in A_{\ell, k}] \mathds{V} \big[ m_{\mathcal{J}}(\bX^{(\mathcal{J})}) \big| \bX \in A_{\ell, k} \big]  \nonumber \\
	& \quad - \mathds{P}[\bX' \in A_{\ell, k, L}] \mathds{V} \big[ m_{\mathcal{J}}(\bX^{(\mathcal{J})}) \big| \bX \in A_{\ell, k, L} \big] \nonumber \\
	& \quad - \mathds{P}[\bX' \in A_{\ell, k, R}] \mathds{V} \big[ m_{\mathcal{J}}(\bX^{(\mathcal{J})}) \big| \bX \in A_{\ell, k, R} \big]  \Big) \label{eq:decomposition_impurity1}
	\end{align}
	where $A_{\ell, k+1, L}$ and $A_{\ell, k+1, R}$ are the children of $A_{\ell, k}$, the first sum (resp. the second) corresponds to the cell split along a coordinate in $\mathcal{J}$ (resp. in $\mathcal{J}^c$), and $n_{\mathcal{J}, k}$  is the number of split performed along a coordinate in $\mathcal{J}$ between level $k$ and $k+1$.
	
	Note that, according to equation~(\ref{eq:crit_indep}), all terms in the second sum in equation (\ref{eq:decomposition_impurity1}) are null. Finally, according to equation~(\ref{eq:crit_rewritting}), $u_{k,\mathcal{J}} - u_{k+1,\mathcal{J}} $ is exactly the sum of variance reduction for splits along coordinates in $\mathcal{J}$ that occur between level $k$ and $k+1$. Consequently, for all $K$, the population MDI computed with the theoretical tree $\tree_K^{\star}$, 
	\begin{align*}
	\sum_{j \in \mathcal{J}} \mdi_{\tree_K^{\star}}(X^{(j)}) & = \sum_{k=0}^{K} \big( u_{k,\mathcal{J}} - u_{k+1,\mathcal{J}} \big) \\
	& = u_{0,\mathcal{J}} - u_{K,\mathcal{J}}.
	\end{align*} 
	Hence, according to equation \eqref{eq:u_j_tend_zero}, for all theoretical tree $\tree^{\star}$
	\begin{align}
	\lim\limits_{K \to \infty} \sum_{j \in \mathcal{J}} \mdi_{\tree_K^{\star}}(X^{(j)}) = \mathds{V} \big[ m_{\mathcal{J}}(\bX^{(\mathcal{J})}) \big], \label{proof_th_mdi_indep1}
	\end{align}
	and similarly, 
	\begin{align}
	\lim\limits_{K \to \infty} \sum_{j \in \mathcal{J}^c} \mdi_{\tree_K^{\star}}(X^{(j)}) = \mathds{V} \big[ m_{\mathcal{J}^c}(\bX^{(\mathcal{J}^c)}) \big] \label{proof_th_mdi_indep2}
	\end{align}
	This proves the first statement of Theorem~\ref{thm:mdi_interaction}. 
	
	\paragraph{Second statement.} To prove the second statement, we need to prove that the convergence of the population MDI in equations \eqref{proof_th_mdi_indep1} and \eqref{proof_th_mdi_indep2} holds uniformly over the set of all possible theoretical trees. Then, we need to extend the convergence to the empirical MDI computed with empirical CART trees. This can be done following the same reasoning as in the proof of Theorem~\ref{thm:sumImportance_emp}.  
	
\end{proof}

\begin{proof}[Proof of Theorem~\ref{thm:additive_models}]

Theorem~\ref{thm:additive_models} is a direct application of Theorem~\ref{thm:mdi_interaction}, by noting that the assumption 
$$\lim_{k \to \infty} \Delta (m , A_{\tree_k^{\star}}(\bX')) = 0$$ 
in Theorem~\ref{thm:mdi_interaction} results from Lemma~1 in \cite{ScBiVe15}.
\end{proof}

To prove  Theorem~\ref{thm:lin_reg}, we need the following Technical Lemma, whose proof is postponed to the end of this section. 

\begin{techlemme}\label{thm:mdi_splimpification_critere}
	Assume that 
	$$
	Y = m(\bX) + \varepsilon,
	$$
	where $\varepsilon$ is a centred noise of finite variance, independent of $\bX$. Thus, the theoretical splitting criterion in the cell $A \subset [0,1]^d$ satisfies
	\begin{align*}
	L^{\star}_A(\ell,s) & = - \mathds{E}^2[m(\bX)|\bX\in A] + \mathds{P}[\bX\in A_L | \bX \in A] \mathds{E}^2[m(\bX) |\bX\in A_L] \\
	& \qquad + \mathds{P}[\bX\in A_R | \bX \in A] \mathds{E}^2[m(\bX) |\bX\in A_R],
	\end{align*}
	where $A_L = A \cap \{ X^{(\ell)} < s\}$ and $A_R = A \cap \{ X^{(\ell)} \geq s\}$.
\end{techlemme}

\begin{proof}[Proof of Theorem \ref{thm:lin_reg}]
	Let $A = \prod_{\ell=1}^d [a_{\ell}, b_{\ell}] \subset [0,1]$ be a generic cell. According to Lemma~\ref{thm:mdi_splimpification_critere}, the splitting criterion writes 
	\begin{align*}
	L^{\star}_A(j,s) & = - \big( \mathds{E} [\alpha_j X^{(j)} |X \in A]\big)^2 
	+ \frac{s-a_j}{b_j - a_j} \big( \mathds{E} [\alpha_j X^{(j)} |X \in A, X^{(j)}\leq s] \big)^2 \\
	& \quad + \frac{b_j - s}{b_j - a_j} \big( \mathds{E} [\alpha_j X^{(j)} |X \in A, X^{(j)}> s] \big)^2\\
	& = - \alpha_j^2 \Big(\frac{a_j + b_j}{2}\Big)^2 
	+  \alpha_j^2 \Big(\frac{s-a_j}{b_j - a_j}\Big)\Big(\frac{a_j+s}{2}\Big)^2
	+ \alpha_j^2 \Big(\frac{b_j - s}{b_j - a_j}\Big)\Big(\frac{b_j+s}{2}\Big)^2\\
	& = - \alpha_j^2 \Big(\frac{a_j + b_j}{2}\Big)^2 
	+  \alpha_j^2 \Big(s^2 - a_j^2\Big)\Big(\frac{a_j+s}{4(b_j-a_j)}\Big)
	+ \alpha_j^2 \Big((b_j^2 - s^2)\Big)\Big(\frac{b_j+s}{4(b_j-a_j)}\Big)\\
	& = \Big(\frac{\alpha_j^2}{4}\Big) s^2 - \alpha_j^2 \Big(\frac{a_j + b_j}{4}\Big)s + \frac{\alpha_j^2 a_j b_j}{4}. 
	\end{align*}
	For any $j$, the function $s \mapsto L^{\star}_A(j,s)$ reaches its maximum at $s^{\star} = (a_j+b_j)/2$. For this particular value, we have
	\begin{align*}
	L^{\star}_A(j,s^{\star}) & = \Big(\frac{\alpha_j^2}{4}\Big) \Big(\frac{a_j+b_j}{2}\Big)^2 - \alpha_j^2 \Big(\frac{a_j + b_j}{4}\Big)\Big(\frac{a_j+b_j}{2}\Big) + \frac{\alpha_j^2 a_j b_j}{4} \nonumber \\
	& = \frac{\alpha_j^2}{4} \Big[ \frac{b_j-a_j}{2}\Big]^2.
	\end{align*}
	Thus the coordinate chosen for splitting in cell $A$ must satisfy
	\begin{align}
	j^{\star} \in \argmax\limits_{\ell \in \{1, \hdots , d\}} \alpha_{\ell}^2 (b_{\ell}-a_{\ell})^2\, . \label{eq:linear_2}
	\end{align}
	Since the chosen coordinate satisfies equation (\ref{eq:linear_2}) and the range of the cell along the chosen side is halved after the split, the diameter of any cell tends to zero, which proves that the theoretical tree is consistent. Hence Theorem \ref{thm:mdi_interaction} can be applied and leads to 
	\begin{align*}
	\lim_{k \to \infty} \mdi_{\tree_k^{\star}}(X^{(j)}) = \alpha_j^2 \mathds{V}[X^{(j)}] = \frac{\alpha_j^2}{12},
	\end{align*}
	since $X^{(j)} \sim \mathcal{U}([0,1])$. 
	The convergence of the empirical MDI directly results from Theorem~\ref{thm:mdi_interaction}.
\end{proof}

\begin{proof}[Proof of Theorem \ref{thm:dim2_product_mdibis}]
	Let $A = \prod_{\ell=1}^d [a_{\ell}, b_{\ell}] \subset [0,1]$ be a generic cell. According to Technical  Lemma~\ref{thm:mdi_splimpification_critere}, the splitting criterion writes
	\begin{align*}
	L^{\star}_A(\ell,s) & = \mathds{V}[Y|\bX\in A] - \mathds{P}[\bX\in A_L | \bX \in A] \mathds{V}[Y |\bX\in A_L] \\
	& \qquad - \mathds{P}[\bX\in A_R | \bX \in A] \mathds{V}[Y |\bX\in A_R]\\
	& = - \mathds{E}^2[m(\bX)|\bX\in A] + \mathds{P}[\bX\in A_L | \bX \in A] \mathds{E}^2[m(\bX) |\bX\in A_L] \\
	& \qquad + \mathds{P}[\bX\in A_R | \bX \in A] \mathds{E}^2[m(\bX) |\bX\in A_R]\\
	& = - \mathds{E}^2\Big[2^k \alpha \prod_{j=1}^k X_j|\bX\in A\Big] + \frac{s-a_{\ell}}{b_{\ell} - a_{\ell}} \mathds{E}^2\Big[2^k \alpha \prod_{j=1}^k X_j |\bX\in A, X_{\ell} <s\Big] \\
	& \qquad + \frac{b_{\ell} - s}{b_{\ell} - a_{\ell}} \mathds{E}^2\Big[2^k \alpha \prod_{j=1}^k X_j |\bX\in A, X_{\ell} \geq s\Big]\\
	& = \frac{\alpha^2}{b_{\ell} - a_{\ell}} \prod_{j \neq \ell} (a_j + b_j)^2 \Big[
	- (a_{\ell} + b_{\ell})^2 (b_{\ell}-a_{\ell}) + (s -a_{\ell})(s+a_{\ell})^2 \\
	& \qquad + (b_{\ell}-s)(s+b_{\ell})^2 \Big]\\
	& = \alpha^2  \Big(-s^2 + (a_{\ell} + b_{\ell}) - a_{\ell} b_{\ell} \Big) \prod_{j \neq \ell} (a_j + b_j)^2.
	\end{align*}
	For any $\ell$, the function $s \mapsto L^{\star}_A(\ell,s)$ reaches its maximum at $s^{\star} = (a_j+b_j)/2$. For this particular value, we have
	\begin{align}
	L^{\star}_A(\ell,s^{\star}) & =  \frac{\alpha^2}{4} (b_{\ell} - a_{\ell})^2 \prod_{j \neq \ell} (a_j + b_j)^2. \label{eq:multiplicative_model}
	\end{align}
	The coordinate $\ell^{\star}$ is chosen for splitting if $L^{\star}_A(\ell^{\star},s^{\star}) = \max\limits_{1 \leq k \leq d} L^{\star}_A(k,s^{\star})$, that is, for all $k$,
	\begin{align*}
	    \frac{b_{\ell^{\star}} - a_{\ell^{\star}}}{a_{\ell^{\star}} + b_{\ell^{\star}}}   & \geq \frac{b_{k} - a_{k}}{a_{k} + b_{k}}.
	\end{align*}
	The variance reduction induced by the split $(\ell^{\star}, s^{\star})$ in the cell $A = \prod_{j=1}^d [a_j, b_j]$ is equal to 
	\begin{align}
	\frac{\alpha_{\ell^{\star}}^2}{4} (b_{\ell^{\star}} - a_{\ell^{\star}})^2 \prod_{j \neq \ell^{\star}} (a_j + b_j)^2. \label{eq:variance_reduction_mult_model}
	\end{align}
	
	Now, fix $\bx \in (0,1]^d$. We want to prove that $\textrm{diam}(A_k(\bx))$ tends to zero as 
	as $k$ tends to infinity. For any cell $A$, and all $\ell \in \{1, \hdots , d\}$, according to equation (\ref{eq:multiplicative_model}), we have 
	\begin{align}
	\frac{\alpha^2}{4} (b_{\ell} - a_{\ell})^2 \prod_{j \neq \ell} x_j^2. \leq L^{\star}_A(\ell,s^{\star}) \leq  \frac{\alpha^2}{4} (b_{\ell} - a_{\ell})^2 . \label{eq:multiplicative2} 
	\end{align}
	For all $k$, let $A_k(\bx) = \prod_{j=1}^d [a_{j,k}, b_{j,k}]$ be the cell containing $\bx$ in the theoretical tree $\tree^{\star}_k$. Let $u^{(j)}_k = b_{j,k} - a_{j,k}$. Since, for all $j$, $(u^{(j)})$ is a positive decreasing sequence, we obtain that $(u^{(j)})$ converges toward $\gamma_j \geq 0$. Let $\Gamma = \{j, \gamma_j >0\}$ and assume that $\Gamma \neq \emptyset$. For all $j$, 
	\begin{align*}
	\left\lbrace
	\begin{array}{ll}
	u^{(j)}_{k+1} = u^{(j)}_k & \textrm{if the split is not performed along the $j$th coordinate} \\
	u^{(j)}_{k+1} = \frac{u^{(j)}_k}{2} & \textrm{otherwise}. 
	\end{array}
	\right..
	\end{align*}
	Hence, there exists $K$ such that for all $k>K$, and all $j \in \mathcal{J}$, 
	\begin{align}
	u^{(j)}_k = \gamma_j. \label{eq:limite_diff_cell_side}
	\end{align}
	In other words, after level $k$, all splits are performed along coordinates belonging to $\mathcal{J}^c$. Now, for all $j \in \mathcal{J}^c$, $\lim_{k \to \infty} u^{(j)}_k = 0$, and thus, according to equation (\ref{eq:multiplicative2}),
	$$\lim\limits_{k \to \infty} \max\limits_{j \in \mathcal{J}^c } L^{\star}_{A_k(\bx)}(j,s^{\star}) = 0.$$
	Therefore, there exists $K_1>K$, such that, for all $k>K_1$, 
	\begin{align*}
	\max\limits_{j \in \mathcal{J}^c } L^{\star}_{A_k(\bx)}(j,s^{\star}) < \min\limits_{\ell \in \mathcal{J} } \frac{\alpha^2}{8} \gamma_{\ell}^2 \prod_{j \neq \ell} x_j^2.
	\end{align*}
	Using again Equation (\ref{eq:multiplicative2}), we deduce that, for all $k>K_1$, 
	\begin{align*}
	\max\limits_{j \in \mathcal{J}^c } L^{\star}_{A_k(\bx)}(j,s^{\star}) < \min\limits_{\ell \in \mathcal{J} } L^{\star}_{A_k(\bx)}(\ell,s^{\star}),
	\end{align*}
	thus the next split is performed along $j \in \mathcal{J}$ which is absurd. Hence, either $\mathcal{J} = \emptyset$ or $\mathcal{J}^c = \emptyset$. If $\mathcal{J}^c = \emptyset$, given the expression of the variance reduction in equation \eqref{eq:variance_reduction_mult_model}, an additional split must occur after rank $K$, with an additional variance decreasing, which is forbidden by equation \eqref{eq:limite_diff_cell_side}. Finally, $\mathcal{J} = \emptyset$ and consequently, for all $j \in \{1, \hdots, d\}$,  $\lim\limits_{k \to \infty} u^{(j)}_k=0$, which implies, for all $\bx \in (0,1]^d$, 
	$$\lim\limits_{k \to \infty} \textrm{diam}(A_k(\bx)) = 0.$$
	Since $\bX$ is uniformly distributed over $[0,1]^d$, and $m$ is continuous, we have, almost surely,
	$$\lim\limits_{k \to \infty} \Delta (m , A_{\tree_k^{\star}}(\bX)) = 0.$$
	A direct application of Theorem~\ref{thm:sumImportance} yields 
	\begin{align*}
	\sum_{j=1}^d \mdi_{\tree_{\infty}^{\star}}(X^{(j)}) = \mathds{V}[m(\bX)].
	\end{align*}
	A direct calculation shows that $\mathds{E}[m(\bX)] = \alpha$,
	and $\mathds{E}[m(\bX)^2] = \alpha^2 \left(4/3 \right)^d$,
	leading directly to the conclusion. 
	The last statement is a direct application of Theorem \ref{thm:sumImportance_emp}.
\end{proof}

\begin{proof}[Proof of Lemma~\ref{lem:diff_imp}]
	We prove the result in dimension two. The generalization in any dimension will directly follow. According to Theorem~\ref{thm:dim2_product_mdibis}, in the root cell $[0,1]^2$, the variable $\ell^{\star}$ selected for splitting must satisfy 
	\begin{align*}
	\ell^{\star} \in \argmax_{k \in \{1,2\}} \Big(\frac{a_k + b_k}{2}\Big) (b_{\ell} - a_{\ell}),  
	\end{align*}
	where the right-hand term is equal to $1/2$ for $k=1,2$. Therefore, the selected variable can be $X^{(1)}$ or $X^{(2)}$. The corresponding decrease in variance is equal to $\alpha^2/4$. 
	
	\paragraph{First theoretical tree.} Consider the theoretical tree $\tree^{\star}_{2,1}$ whose construction is stopped at level two, where the first split is performed along $X^{(1)}$. The same calculation as above shows that splits must occur along the second variable which gives a variance decreasing of $\alpha^2/16$ in the left cell, and $9\alpha^2/16$ in the 
	right cell. The corresponding partition is shown on the left of Figure~\ref{figure2}. The variable importances for the tree $\tree^{\star}_{2,1}$ are
	\begin{align*}
	\mdi_{\tree^{\star}_{2,1}}(X^{(1)}) = \alpha^2/4
	\end{align*}
	and
	\begin{align*}
	\mdi_{\tree^{\star}_{2,1}}(X^{(2)}) = \frac{1}{2} \left( \alpha^2/16 + 9 \alpha^2/16 \right) = 5 \alpha^2/16
	\end{align*}
	
	\paragraph{Second theoretical tree.} Consider the theoretical tree    $\tree^{\star}_{2,2}$ whose construction is stopped at level two, where the first split is made along the second variable. Then the same reasoning as above shows that the second split must be made along the second variable, the resulting partition is displayed on the right of Figure ~\ref{figure2}. The variable importances for the tree $\tree^{\star}_{2,2}$ are thus 
	\begin{align*}
	\mdi_{\tree^{\star}_{2,2}}(X^{(1)}) = 5 \alpha^2/16
	\end{align*}
	and
	\begin{align*}
	\mdi_{\tree^{\star}_{2,2}}(X^{(2)}) =  \alpha^2/4
	\end{align*}
	
	\paragraph{Conclusion} The partitions of the two theoretical trees $\tree^{\star}_{2,1}$ and $\tree^{\star}_{2,2}$ defined above are the same but variable importances are not. Since the partitions are the same, one can choose any splitting strategy for the following splits and use this exact same strategy in both $\tree^{\star}_{2,1}$ and $\tree^{\star}_{2,2}$. Since the splits occurring at level $k \geq 2$ are set to be the same, we have
	\begin{align*}
	\mdi_{\mathcal{T}_{k,2}}(X^{(1)}) - \mdi_{\mathcal{T}_{k,1}}(X^{(1)}) = 5 \alpha^2/16 - \alpha^2/4 = \alpha^2/16.
	\end{align*}
	
\end{proof}

%
%

\begin{proof}[Proof of Lemma \ref{lemma:correlationX1X2}]
	We have $\bX \sim \mathcal{U}\left( \cup_{j=1}^{2^{\beta}} \Big[ \frac{j-1}{2^{\beta}}, \frac{j}{2^{\beta}}\Big)^2 \right) = \unifbeta.$ Therefore, 
	\begin{align*}
	\textrm{Cov}(X^{(1)}, X^{(2)}) & = \mathds{E}\Big[X^{(1)} X^{(2)} \sum_{j=1}^{2^{\beta}} \mathds{1}_{\bX \in  [ \frac{j-1}{2^{\beta}}, \frac{j}{2^{\beta}})^2}\Big] - \frac{1}{4}\\
	& = \sum_{j=1}^{2^{\beta}} \mathds{E} \Big[ \mathds{1}_{\bX \in  [ \frac{j-1}{2^{\beta}}, \frac{j}{2^{\beta}})^2} \mathds{E}\Big[X^{(1)} X^{(2)} \Big| \bX \in  \big[ \frac{j-1}{2^{\beta}}, \frac{j}{2^{\beta}}\big)^2 \Big]  \Big]   - \frac{1}{4},
	\end{align*}
	with
	\begin{align*}
	\mathds{E}\Big[X^{(1)} X^{(2)} \Big| \bX \in  \big[ \frac{j-1}{2^{\beta}}, \frac{j}{2^{\beta}}\big)^2 \Big] = \frac{1}{2^{2\beta}} \Big(  j - \frac{1}{2}\Big)^2,
	\end{align*}
	which leads to
	\begin{align*}
	\textrm{Cov}(X^{(1)}, X^{(2)}) & = \sum_{j=1}^{2^{\beta}} \frac{1}{2^{3\beta}} \Big( j - \frac{1}{2} \Big)^2 - \frac{1}{4}\\
	& =  \frac{1}{2^{3\beta+2}} \sum_{j=1}^{2^{\beta}}   (2j -1)^2 - \frac{1}{4}.
	\end{align*}
	Since we know that 
	\begin{align*}
	\sum_{j=1}^{2^{\beta}} (2j-1)^2 = \frac{{2^{\beta}} ({2^{2\beta+2}} - 1)}{3} ,
	\end{align*}
	the covariance can be expressed as
	\begin{align*}
	\textrm{Cov}(X^{(1)}, X^{(2)}) 
	& = \frac{1}{12} - \frac{1}{3. 2^{2{\beta}+2}}.
	\end{align*}
	Since the marginals are uniform on $[0,1]$, we have $\mathds{V}[X^{(1)}]= \mathds{V}[X^{(2)}]= 1/12$, which yields,
	\begin{align*}
	\textrm{Corr}(X^{(1)}, X^{(2)}) 
	& = 1 - \frac{1}{4^{\beta}}.
	\end{align*}
\end{proof}

\begin{proof}[Proof of Proposition~\ref{prop:cor_vs_uncor_input}]

The first statement of Theorem~\ref{thm:mdi_interaction} remains valid even when the density of $\bX$ is not lower bounded by some positive constant. Besides, using Technical Lemma~\ref{thm:mdi_splimpification_critere}, the splitting criterion for Model~\ref{ass:last_theorem_corr}, computed for a split along the first coordinate writes 
\begin{align}
L^{\star}(1,s) & = - \mathds{E}^2 [X^{(1)} + X^{(2)}| (X^{(1)},X^{(2)}) \in [0,1]^2 ] \nonumber \\
& \quad + s \mathds{E}^2[X^{(1)} + X^{(2)}| (X^{(1)},X^{(2)}) \in [0,s]\times [0,1]] \nonumber \\
& \quad + (1-s) \mathds{E}^2[X^{(1)} + X^{(2)}| (X^{(1)},X^{(2)}) \in [s,1]\times [0,1]] \nonumber \\
& = - 1 + s \Big( \frac{s}{2} + \mathds{E}[X^{(2)}| (X^{(1)},X^{(2)}) \in [0,s]\times [0,1]]\Big)^2 \nonumber \\
& \quad + (1-s) \Big( \frac{1+s}{2} + \mathds{E}[X^{(2)} | (X^{(1)},X^{(2)}) \in [s,1]\times [0,1]] \Big)^2. \label{eq:split_crit_corr}
\end{align}
Let us fix a split at position $s$ along variable $X^{(1)}$ and let $\ell = \lfloor 2^{\beta} s \rfloor$. The conditional density of $X^{(2)}$ given that $(X^{(1)},X^{(2)}) \in [0,s] \times [0,1]$ is 
\begin{align*}
f_{X^{(2)} | X^{(1)} \in [0,s]} (x_2) = C \Big( \frac{1}{2^{\beta}} \mathds{1}_{2^{\beta} x_2 \leq \ell} 
+ \Big(s - \frac{\ell}{2^{\beta}} \Big) \mathds{1}_{\ell < 2^{\beta} x_2 \leq \ell +1}\Big).
\end{align*}
where, in order for $f_{X_2 | X_1 \in [0,s]}$ to be a density, $C$ must satisfy
\begin{align*}
& C \Big( \frac{\ell}{4^{\beta}} + \Big( s - \frac{\ell}{2^{\beta}}\Big)\frac{1}{2^{\beta}}\Big) = 1,
\end{align*}
that is $ C = 2^{\beta}/s$. Hence, 
\begin{align*}
f_{X^{(2)} | X^{(1)} \in [0,s]} (x_2) = \frac{2^{\beta}}{s} \Big( \frac{1}{2^{\beta}} \mathds{1}_{2^{\beta} x_2 \leq \ell} 
+ \Big(s - \frac{\ell}{2^{\beta}} \Big) \mathds{1}_{\ell < 2^{\beta} x_2 < \ell +1}\Big),
\end{align*}
and similarly, 
\begin{align*}
f_{X^{(2)} | X^{(1)} \in [s,1]}(x_2) = \frac{2^{\beta}}{1-s} \Big( \Big( \frac{\ell + 1}{2^{\beta}} - s \Big) \mathds{1}_{\ell < 2^{\beta} x_2 < \ell +1} 
+ \frac{1}{2^{\beta}} \mathds{1}_{2^{\beta} x_2 \geq \ell +1}\Big).
\end{align*}
To make explicit the splitting criterion in equation (\ref{eq:split_crit_corr}), we need to compute, on one hand, 
\begin{align}
& \mathds{E}[X^{(1)} + X^{(2)} | (X^{(1)},X^{(2)}) \in [0,s] \times [0,1] ] \nonumber \\
& = \frac{s}{2} + \frac{1}{s} \int_0^{\ell /2^{\beta}} x_2 \textrm{d}x_2
+ \frac{2^{\beta}}{s} \Big(s - \frac{\ell }{2^{\beta}} \Big) \int_{\ell /2^{\beta}}^{(\ell +1)/2^{\beta}} x_2 \textrm{d}x_2 \nonumber \\
& = \frac{s}{2} + \frac{1}{2s} \Big( \frac{\ell }{2^{\beta}}\Big)^2 + \frac{2^{\beta}}{2s} \Big( s - \frac{\ell }{2^{\beta}}\Big) \Big( \Big( \frac{\ell  +1}{2^{\beta}}\Big)^2 - \Big(\frac{\ell }{2^{\beta}}\Big)^2\Big) \nonumber \\
& = \frac{s}{2} + \frac{1}{2s} \Big( \frac{\ell }{2^{\beta}}\Big)^2 + \frac{2 \ell +1}{2^{{\beta}+1}s} \Big( s - \frac{\ell }{2^{\beta}}\Big), \label{eq:part1_split_corr}
\end{align}
and on the other hand, 
\begin{align}
& \mathds{E}[X^{(1)} + X^{(2)} | (X^{(1)},X^{(2)})  \in [s,1]\times [0,1] ] \nonumber \\
& = \frac{1+s}{2} + 
\frac{2^{\beta}}{1-s} \Big( \frac{\ell+1}{2^{\beta}} - s \Big)
\int_{\ell/2^{\beta}}^{(\ell+1)/2^{\beta}} x_2 \textrm{d}x_2 
+ \frac{2^{\beta}}{1-s} \frac{1}{2^{\beta}} \int_{(\ell+1)/2^{\beta}}^{1} x_2 \textrm{d}x_2 \nonumber \\
& = \frac{1+s}{2} + \frac{2^{\beta}}{2(1-s)} \Big( \frac{\ell+1}{2^{\beta}} -s\Big) \Bigg( \Big( \frac{\ell+1}{2^{\beta}}\Big)^2 - \Big( \frac{\ell}{2^{\beta}}\Big)^2\Bigg) + \frac{1}{2(1-s)} \Bigg( 1 - \Big( \frac{\ell +1}{2^{\beta}}\Big)^2\Bigg) \nonumber\\
& = \frac{1+s}{2} - \frac{(2 \ell +1)s}{2^{{\beta}+1} (1-s)} + \frac{1}{2(1-s)} \Big( \frac{\ell (\ell+1)}{2^{2{\beta}}}+1\Big).	
\label{eq:part2_split_corr}
\end{align}
Injecting equations (\ref{eq:part1_split_corr}) and (\ref{eq:part2_split_corr}) into (\ref{eq:split_crit_corr}), we get
\begin{align}
L^{\star}(1,s) & = -1 + s 
\Big( \frac{s}{2} + \frac{2\ell+1}{2^{{\beta}+1}} - \frac{\ell^2 + \ell}{2^{2{\beta}+1} s } \Big)^2 \nonumber \\
& \qquad + (1-s) \Bigg( \frac{1+s}{2} - \frac{(2 \ell +1)s}{2^{{\beta}+1} (1-s)} + \frac{1}{2(1-s)} \Big( \frac{\ell (\ell+1)}{2^{2{\beta}}}+1\Big)   \Bigg)^2. \label{eq:split_crit_corr2}
\end{align}
The second term in equation (\ref{eq:split_crit_corr2}) can be expressed as
\begin{align*}
\Big( \frac{s}{2} + \frac{2\ell+1}{2^{{\beta}+1}} - \frac{\ell^2 + \ell}{2^{2{\beta}+1} s } \Big)^2 &=  \Big( \frac{2 \ell +1}{2^{{\beta}+1}}\Big)^2 + \frac{s^2}{4} 
+ \Big(\frac{\ell^2 + \ell}{2^{2{\beta}+1} s}\Big)^2
- \frac{\ell^2 + \ell}{2^{2{\beta}+1}}\\
& \quad + s \Big(  \frac{2 \ell +1}{2^{{\beta}+1}}\Big)
- \Big(\frac{2 \ell +1}{2^{{\beta}+1}}\Big) \Big(\frac{\ell^2 + \ell}{2^{2{\beta}+1} s}\Big).
\end{align*}
and third term in equation (\ref{eq:split_crit_corr2}) as
\begin{align*}
& \Bigg( \frac{1+s}{2} - \frac{(2 \ell +1)s}{2^{{\beta}+1} (1-s)} + \frac{1}{2(1-s)} \Big( \frac{\ell (\ell+1)}{2^{2{\beta}}}+1\Big)   \Bigg)^2 \\
= ~& 
\frac{1}{4} + \frac{s}{2}
+ \frac{s^2}{4} +  \frac{1}{2} \Big(\frac{\ell (\ell+1)}{2^{2{\beta}}} +1\Big) \frac{1}{1-s}  + \frac{s}{1-s}\Bigg( \frac{1}{2} \Big( \frac{\ell(\ell+1)}{2^{2{\beta}}} +1\Big) - \frac{ 2 \ell +1}{2^{{\beta}+1}}\Bigg)\\
&  
- \frac{(2\ell+1)s^2}{2^{{\beta}+1} (1-s)}
+ \frac{1}{4(1-s)^2} \Big(\frac{\ell (\ell+1)}{2^{2{\beta}}}+1\Big)^2
- \Big( \frac{\ell (\ell+1)}{2^{2{\beta}}} +1\Big) \Big(\frac{2\ell+1}{2^{{\beta}+1}}\Big) \frac{s}{(1-s)^2}\\
&  + \Big(\frac{2\ell+1}{2^{{\beta}+1}}\Big)^2 \frac{s^2}{(1-s)^2},
\end{align*}
which, after several simplifications, leads to
\begin{align*}
L^{\star}(1,s) & = - \frac{1}{4} s^2 + s\Big( \frac{3}{4} + \frac{4 \ell^2 + 4 \ell +1}{2^{2{\beta}+2}} - \Big( \frac{2 \ell +1}{2^{{\beta}+1}}\Big)\Big) 
- \frac{3}{4} + \frac{1}{2}\Big( \frac{\ell (\ell+1)}{2^{2{\beta}}} +1 \Big)\\
& \qquad - \Big( \frac{2 \ell +1}{2^{{\beta}+1}}\Big) \Big( \frac{\ell^2 + \ell}{2^{2{\beta}}}\Big) 
+ \frac{1}{s} \Big( \frac{\ell^2 + \ell }{2^{2{\beta}+1}}\Big)^2 
+ \frac{1}{4(1-s)} \Big( \frac{\ell (\ell +1)}{2^{2{\beta}}} +1 \Big)^2\\
& \qquad  + \frac{s^2}{1-s} \Big( \frac{2\ell +1}{2^{{\beta}+1}}\Big)^2
- \frac{s}{1-s} \Big( \frac{2 \ell +1}{2^{{\beta}+1}} \Big) \Big( \frac{\ell (\ell +1)}{2^{2{\beta}}} +1 \Big).
\end{align*}
Thus we have
\begin{align}
\frac{\partial L^{\star}(1,s)}{\partial s} & =
- \frac{1}{2} s + \frac{3}{4} + \frac{4 \ell^2 + 4 \ell +1}{2^{2{\beta}+2}} - \Big( \frac{2 \ell +1}{2^{{\beta}+1}} \Big) 
- \frac{1}{s^2} \Big( \frac{\ell^2 + \ell}{2^{2{\beta}+1}}\Big)^2 \nonumber \\
& \qquad + \frac{1}{(1-s)^2} \Bigg( \frac{1}{4} \Big( \frac{\ell (\ell+1)}{2^{2{\beta}}} +1\Big)^2 - \Big( \frac{2 \ell +1}{2^{{\beta}+1}}\Big) \Big(\frac{\ell (\ell+1)}{2^{2{\beta}}} +1 \Big)\Bigg) \nonumber \\
& \qquad + \frac{s(2-s)}{(1-s)^2} \Big(\frac{2 \ell +1}{2^{{\beta}+1}}\Big)^2, \label{eq:diff_criterion}
\end{align}
with
\begin{align*}
\Big| \frac{\partial L^{\star}(1,s)}{\partial s} \Big| & \leq 6 + 12 \times 2^{2{\beta}}.
\end{align*}
We know that the function $s \mapsto L^{\star}(1,s)$  is symmetric around $1/2$ and we  conjecture that it is increasing on $[0,1/2]$. Note that, we have $L^{\star}(1,1/2) = 1/4$.	
For all $\beta$, let $m_{\beta} = L^{\star}(1, 1/2 - 1/2^{\beta})$. Setting a regular grid of size 
\begin{align*}
\varepsilon = \frac{\frac{1}{4} - m_{\beta}}{\Big\| \frac{\partial L^{\star}(1,s)}{\partial s} \Big\|_{\infty}},
\end{align*}
if all evaluations of $L^{\star}(1,s)$ on the previous grid are lower than $m_{\beta}$ then the maximum of $s \mapsto L^{\star}(1,s)$ is reached in $[1/2 - 1/2^{\beta}, 1/2 + 1/2^{\beta}]$. An numerical optimization with the function \texttt{optimize} of $\texttt{R}$ \citep[see][]{OptimizePackage, RPackage} shows that it is true for ${\beta}=1, \hdots, 5$. Figure~\ref{figure4} displays the splitting criterion for ${\beta} = 0, \hdots, 3$.
\begin{figure}[h!!]
	\begin{minipage}[l]{.5\linewidth}
		\begin{center}
			\includegraphics[scale=0.35]{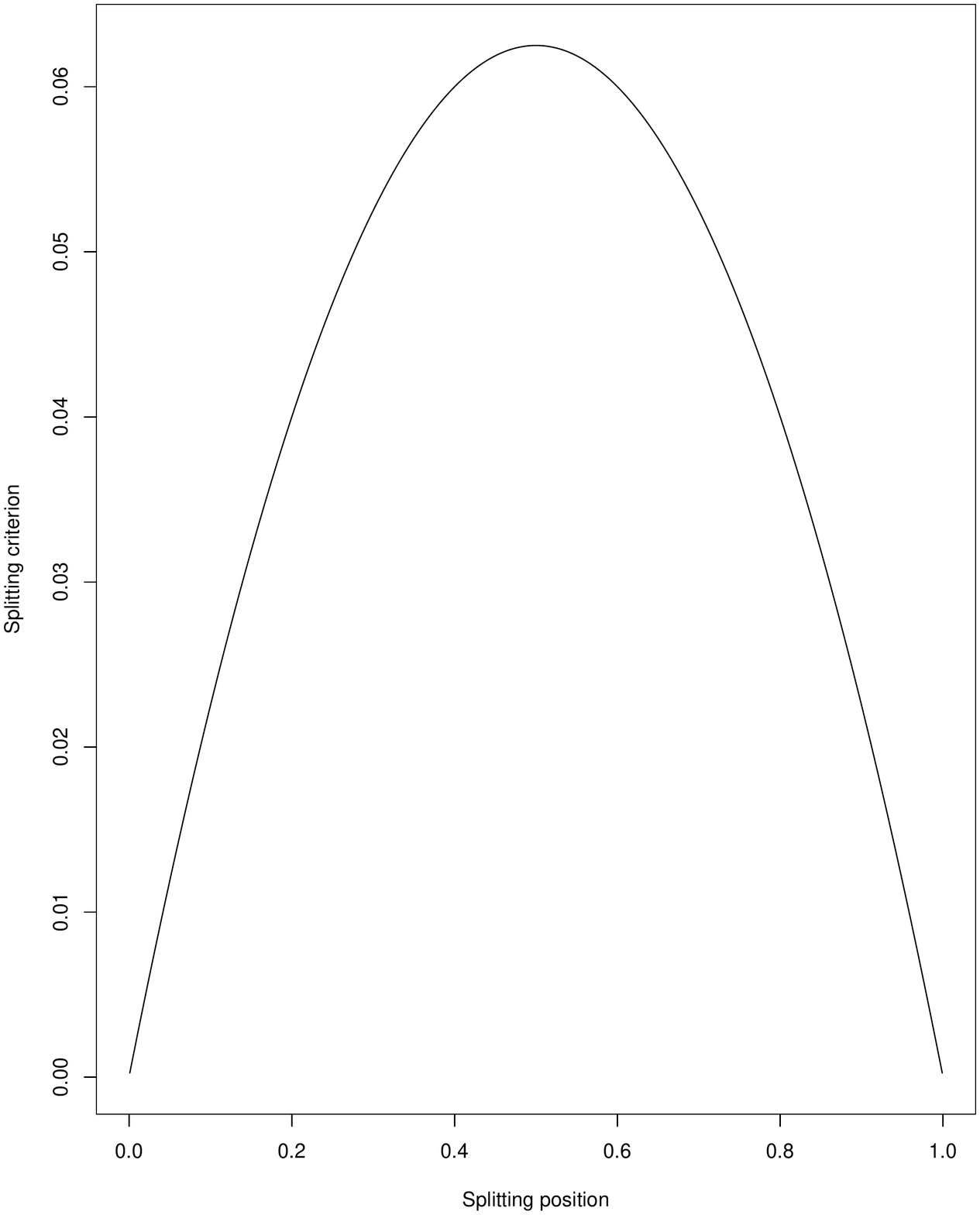}
		\end{center}
		
		\begin{center}
			\includegraphics[scale=0.35]{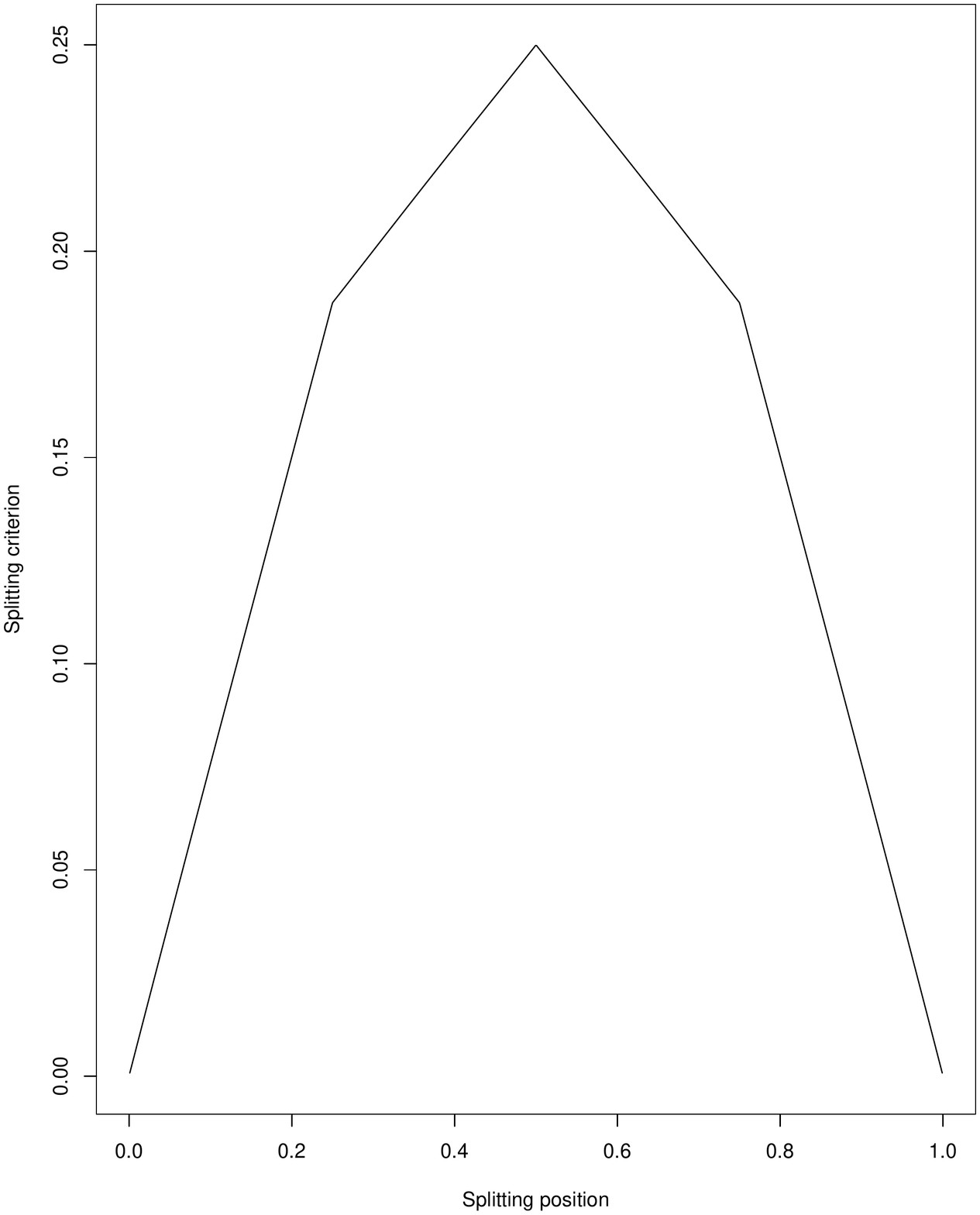}
		\end{center}
	\end{minipage} 
	\hspace{-0.5cm}
	\begin{minipage}[r]{.5\linewidth}
		\begin{center}
			\includegraphics[scale=0.35]{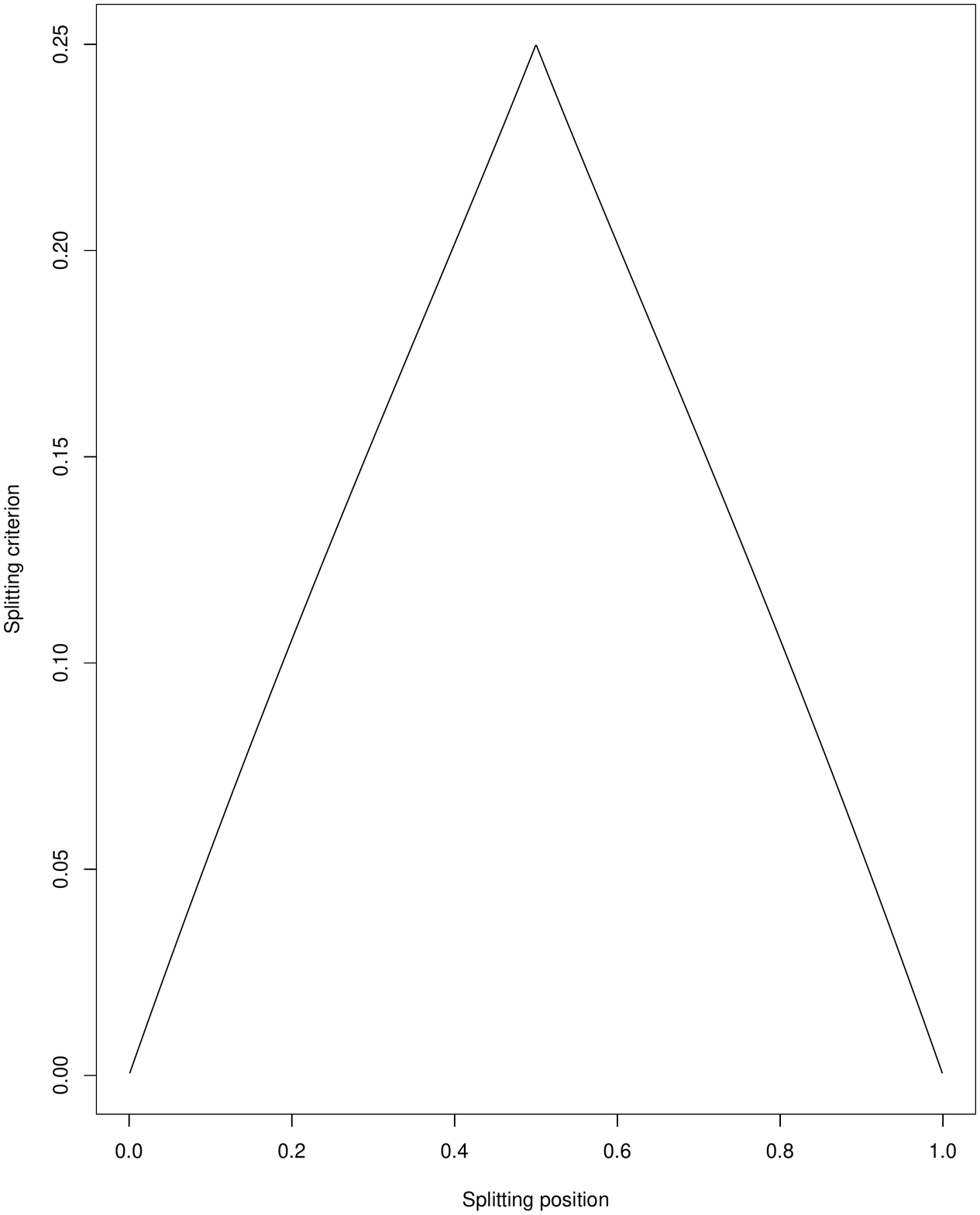}
		\end{center}	
		
		\begin{center}
			\includegraphics[scale=0.35]{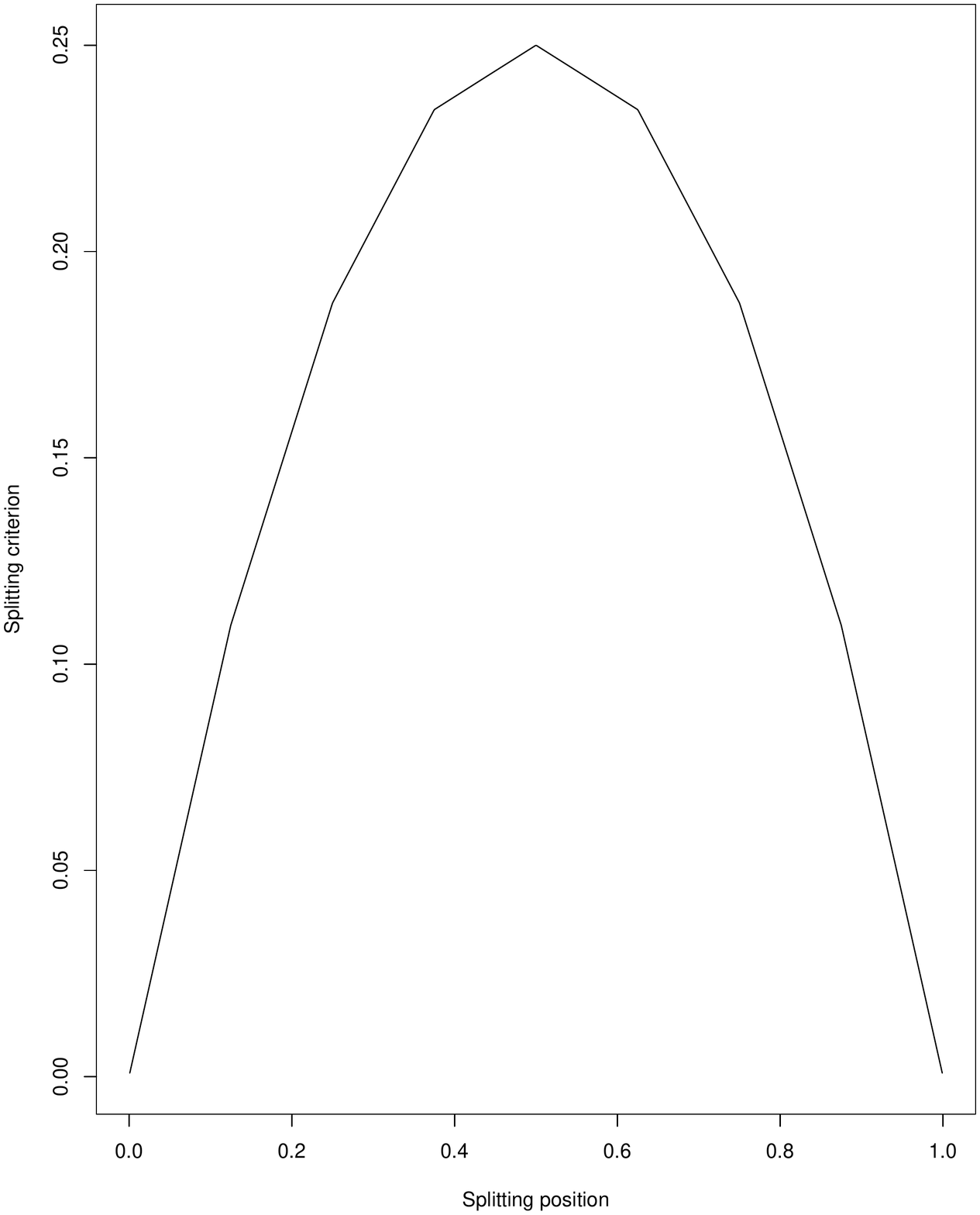}
		\end{center}
	\end{minipage}
	\caption{Illustration of the splitting criterion $L^{\star}(1,s)$ for ${\beta}=0$ (top left), ${\beta}=1$ (top right), ${\beta}=2$ (bottom left), ${\beta}=3$ (bottom right).}
	\label{figure4}
\end{figure}
On the interval $[1/2 - 1/2^{\beta}, 1/2)$, the partial derivative writes
\begin{align*}
\frac{\partial L^{\star}(1,s)}{\partial s} & =
- \frac{1}{2} s + \Big( \frac{3}{4} + \gamma_{\beta} \Big) - \frac{\gamma_{\beta}}{s^2},
\end{align*}
where $$\gamma_{\beta} = - \frac{1}{2^{{\beta}+1}} + \frac{1}{2^{2{\beta}+2}}.$$
Using equation \eqref{eq:diff_criterion} and the fact that $\frac{\partial L^{\star}(1,1)}{\partial s} = 0$, we have
\begin{align}
\frac{\partial L^{\star}(1,s)}{\partial s} & =
- \frac{1}{2} (s-1) \Big(s^2 + \Big(1 - 2 \Big(\frac{3}{4} + \gamma_{\beta}\Big) \Big)s + 2 \Big(\frac{1}{4} + \gamma_{\beta}\Big) \Big), \label{eq:proof_discriminant}
\end{align}
where the discriminant of the second order expression in equation \eqref{eq:proof_discriminant} is, for $\beta \geq 1$, 
\begin{align*}
\Delta & = 4 \gamma_{\beta}^2 - 6 \gamma_{\beta} - \frac{7}{4}\\
& = - \frac{7}{4} + \frac{3}{2^{\beta}} -  \frac{1}{2^{2{\beta}+1}}
- \frac{1}{2^{3{\beta}}} + \frac{1}{2^{4{\beta}+2}}\\
& < 0.	
\end{align*}
Thus the sign of the expression in the right-hand side of equation \eqref{eq:proof_discriminant} does not vary on $[0,1/2) $ and is positive in zero. Thus, the splitting criterion is increasing on  $[1/2 - 1/2^{\beta}, 1/2)$. Since we know with the optimization procedure that the maximum is reached in this interval, it is reached for $s=1/2$. This proves the first statement of Proposition~\ref{prop:cor_vs_uncor_input}.

Regarding the second statement of Proposition~\ref{prop:cor_vs_uncor_input}, note that the above calculations hold also for a split along the second variable. According to Theorem~\ref{thm:lin_reg}, the variance reduction associated to a split performed along $X^{(3)}$ equals $\alpha^2/16$. Thus, for $\beta = 0, \hdots, 5$, a split will occur along the first or second variable if, and only if 
\begin{align*}
& \frac{\alpha^2}{16} < \frac{1}{4},
\end{align*}
that is $|\alpha| < 2$.

\end{proof}

\begin{proof}[Proof of Theorem~\ref{thm:mdi_with_correlation}]
	
	The  distribution of $X^{(3)}$ in each cell is still uniform. Assume that $|\alpha| < 2$, so that the first split is performed on (without loss of generality) the first variable. Then the resulting left cell is $A = [0,1/2] \times [0,1]^2$. For $\beta >1$, the conditional distribution of $(X^{(1)}, X^{(2)})$ on $A$ is given by 
	\begin{align*}
	(X^{(1)}, X^{(2)}) \sim \mathcal{U}\left( \cup_{j=0}^{2^{\beta-1}-1} \Big[ \frac{j}{2^{\beta}}, \frac{j+1}{2^{\beta}}\Big)^2 \right).
	\end{align*}
	On this cell, the regression model still writes 
	$$
	Y = \frac{1}{2} (2X^{(1)}) + \frac{1}{2} (2 X^{(2)}) + \alpha X^{(3)} + \varepsilon,
	$$
	where 
	\begin{align*}
	(2X^{(1)}, 2X^{(2)}) \sim \mathcal{U}\left( \cup_{j=0}^{2^{\beta-1}-1} \Big[ \frac{j}{2^{\beta-1}}, \frac{j+1}{2^{\beta-1}}\Big)^2 \right).
	\end{align*}
	Therefore, the previous reasoning applies and a split is performed on the first or second variable in this cell if, and only if, 
	\begin{align*}
	\frac{1}{16} > \frac{\alpha^2}{16},
	\end{align*}
	that is $|\alpha| < 1$. This holds until each cell has been cut $\beta$ times along the first or second coordinate. In the resulting cells, the distribution of $\bX$ is uniform on its support which is an hyperrectangle, and, according to Theorem~\ref{thm:lin_reg}, the variance of the regression function decreases to zero as a function of the tree level $k$. The previous reasoning shows that the sequence of splits along $\{X^{(1)} ~\textrm{or} ~X^{(2)}\}$ and $X^{(3)}$ is deterministic and depends only of the coefficient $\alpha$.
	The fourth statement, that is the convergence of the empirical MDI directly results from the convergence of theoretical trees proved above, and from a direct application of Theorem~\ref{thm:mdi_interaction}. 

\end{proof}

\begin{proof}[Proof of Lemma~\ref{lem:diff_imp_corr}]
	According to Theorem~\ref{thm:mdi_interaction}, splits along $X^{(3)}$ has no impact on the variance of $X^{(1)} + X^{(2)}$. One can thus assume, without loss of generality that the splits of the first $\beta$ levels are all performed along the first and the second variables. According to the proof of Theorem~\ref{prop:cor_vs_uncor_input}, in each cell of the first $\beta$ levels, the splitting criterion is the same along variable $X^{(1)}$ and $X^{(2)}$. Consider the tree $\tree^{\star}_1$ where all splits in the first $\beta$ levels are made along the first variable. Then, the total decrease of variance associated to all these splits is 
	\begin{align*}
	\mdi_{\tree^{\star}_{\beta,1}}(X^{(1)}) & = \sum_{j=0}^{\beta-1}\frac{1}{4} \Big(\frac{1}{2^j}\Big)^2 
	 = \frac{1}{3} - \frac{1}{3} \Big( \frac{1}{4}\Big)^{\beta}.
	\end{align*}
	In each cell that results from the first $\beta$ cuts, the distribution of $(X^{(1)}, X^{(2)})$ is uniform. Then the decrease of variance in all these cells is the same for $X^{(1)}$ and $X^{(2)}$. Hence,
	\begin{align*}
	\lim\limits_{k \to \infty} \Big( \mdi_{\tree^{\star}_{k,1}}(X^{(1)}) - \mdi_{\tree^{\star}_{k,1}}(X^{(2)}) \Big) = \frac{1}{3} - \frac{1}{3} \Big( \frac{1}{4}\Big)^{\beta}.
	\end{align*}
	Let $\tree^{\star}_{2}$ be the exact same tree as $\tree^{\star}_{1}$ but where all the first $\beta$ splits are  performed along the second variable. As before, we have,
	\begin{align*}
	\lim\limits_{k \to \infty} \Big( \mdi_{\tree^{\star}_{k,2}}(X^{(2)}) - \mdi_{\tree^{\star}_{k,2}}(X^{(1)}) \Big) = \frac{1}{3} - \frac{1}{3} \Big( \frac{1}{4}\Big)^{\beta}.
	\end{align*}
	According to Theorem~\ref{prop:cor_vs_uncor_input}, we know that 
	\begin{align*}
\lim\limits_{k \to \infty} \Big(  \mdi_{\tree^{\star}_{k,1}}(X^{(1)}) + \mdi_{\tree^{\star}_{k,1}}(X^{(2)}) \Big) 
& = \lim\limits_{k \to \infty} \Big(   \mdi_{\tree^{\star}_{k,2}}(X^{(1)}) + \mdi_{\tree^{\star}_{k,2}}(X^{(2)}) \Big)\\
& = \mathds{V}[X^{(1)}+ X^{(2)}].
	\end{align*}
	Finally, we have 
	\begin{align*}
\lim\limits_{k \to \infty} \Big(  \mdi_{\tree^{\star}_{k,1}}(X^{(1)}) - \mdi_{\tree^{\star}_{k,2}}(X^{(1)}) \Big) & =  \lim\limits_{k \to \infty} \Big(  \mdi_{\tree^{\star}_{k,2}}(X^{(2)}) - \mdi_{\tree^{\star}_{k,1}}(X^{(2)}) \Big) \\
& = \frac{1}{3} - \frac{1}{3} \Big( \frac{1}{4}\Big)^{\beta}.
	\end{align*}	
\end{proof}

\begin{proof}[Proof of Technical Lemma~\ref{thm:mdi_splimpification_critere}]
	
	With the notations of Technical Lemma~\ref{thm:mdi_splimpification_critere}, we have
	\begin{align*}
	L^{\star}_A(\ell,s) & = \V[ Y | \bX \in A ] - \P[ \bX \in A_L \,|\, \bX \in A ]  ~\V [Y | \bX \in A_L ] \nonumber \\
	& \qquad - \P[ \bX \in A_R \,|\, \bX \in A ] ~\V [Y | \bX \in A_R ]\\
	& =  \V[ m(\bX) | \bX \in A ] - \P[ \bX \in A_L \,|\, \bX \in A ]  ~\V [m(\bX) | \bX \in A_L ] \nonumber \\
	& \qquad - \P[ \bX \in A_R \,|\, \bX \in A ] ~\V [m(\bX) | \bX \in A_R ],
	\end{align*}
	since the noise $\varepsilon$ is independent of $\bX$ and the probabilities $\P[ \bX \in A_L \,|\, \bX \in A ]$ and $\P[ \bX \in A_R \,|\, \bX \in A ]$ sum to one. Therefore, 
	\begin{align}
	L^{\star}_A(\ell,s) & = \E[ m^2(\bX) | \bX \in A ] - \P[ \bX \in A_L \,|\, \bX \in A ]  ~\E [m^2(\bX) | \bX \in A_L ] \nonumber \\
	& \qquad - \P[ \bX \in A_R \,|\, \bX \in A ] ~\E [m^2(\bX) | \bX \in A_R ] \nonumber \\
	& \qquad - \mathds{E}^2[m(\bX)|\bX\in A] + \mathds{P}[\bX\in A_L | \bX \in A] \mathds{E}^2[m(\bX) |\bX\in A_L] \nonumber \\
	& \qquad + \mathds{P}[\bX\in A_R | \bX \in A] \mathds{E}^2[m(\bX) |\bX\in A_R], \label{eq:finish1}
	\end{align}
	where
	\begin{align}
	& \P[ \bX \in A_L \,|\, \bX \in A ]  ~\E [m^2(\bX) | \bX \in A_L ] \nonumber  + \P[ \bX \in A_R \,|\, \bX \in A ] ~\E [m^2(\bX) | \bX \in A_R ] \nonumber \\
	& = \E[ m^2(\bX) \mathds{1}_{\bX \in A_L} + m^2(\bX) \mathds{1}_{\bX \in A_R} \,|\, \bX \in A] \nonumber \\
	& = \E[ m^2(\bX) \,|\, \bX \in A], \label{eq:finish2}
	\end{align}
	which gives the conclusion by injecting (\ref{eq:finish2}) into (\ref{eq:finish1}). 
\end{proof}

\bibliographystyle{plainnat} 

\bibliography{biblio-these}

\end{document}